\documentclass{elsarticle}
\usepackage{lineno,hyperref}
\usepackage{amsmath,amssymb,amsthm}
\usepackage{bm}
\usepackage{color}
\usepackage{amscd}
\usepackage[all]{xy}
\usepackage{tikz}
\usepackage{bm}
\modulolinenumbers[5]

\journal{Journal of \LaTeX\ Templates}

\bibliographystyle{plain}

\newtheorem{thm}{Theorem}[section]
\newtheorem{defi}[thm]{Definition}
\newtheorem{rem}[thm]{Remark}
\newtheorem{prop}[thm]{Proposition}
\newtheorem{cor}[thm]{Corollary}
\newtheorem{lem}[thm]{Lemma}
\newtheorem{exam}[thm]{Example}

\newtheorem{notation}[thm]{Notation}

\newcommand{\GL}{{\rm GL}}
\newcommand{\gl}{{\rm gl}}
\newcommand{\Gal}{{\rm Gal}}
\newcommand{\Hom}{{\rm Hom}}
\newcommand{\stab}{{\rm stab}}
\newcommand{\id}{{\rm id}}

\newcommand{\Seq}{{\rm Seq}}
\newcommand{\diag}{{\rm diag}}
\newcommand{\sdgal}{\sigma\delta\mbox{-}\Gal}
\newcommand{\SDgal}{\Sigma\Delta\mbox{-}\Gal}
\newcommand{\dgal}{\delta\mbox{-}\Gal}
\newcommand{\sgal}{\sigma\mbox{-}\Gal}

\newcommand{\Q}{{\mathbb{Q}}}
\newcommand{\Z}{{\mathbb{Z} }}
\newcommand{\K}{{\mathcal{K} }}
\newcommand{\F}{{\mathcal{F} }}
\newcommand{\calZ}{{\mathcal{Z} }}
\newcommand{\calX}{{\mathcal{X} }}

\newcommand{\calS}{{\mathcal{S} }}
\newcommand{\R}{{\mathcal{R}}}

\newcommand{\calD}{{\mathcal{D} }}
\newcommand{\I}{{\mathcal{I}}}

\newcommand{\lc}{{\rm\mbox{lc}}}

\newcommand{\frakm}{\mathbf{m}}
\newcommand{\frakn}{\mathbf{n}}
\newcommand{\bfc}{\mathbf{c}}
\newcommand{\bfb}{\mathbf{b}}

\newcommand{\frakq}{\mathfrak{q}}

\newcommand{\fraki}{\mathfrak{i}}
\newcommand{\frakh}{\mathfrak{h}}

\newcommand{\ie}{{\it i.e.\,\,}}

\begin{document}

\begin{frontmatter}
\title{Galois Groups of Linear Difference-Differential Equations}
\tnotetext[mytitlenote]{This work was supported by NSFC under Grants No.11771433 and No.11688101, Beijing Natural Science Foundation (Z190004) and  National Key Research and Development Project 2020YFA0712300.}
\author{Ruyong Feng and Wei Lu}

\address{KLMM,Academy of Mathematics and Systems Science, Chinese Academy of Sciences and School of Mathematics, University of Chinese Academy of Sciences, 100190, Beijing\\
ryfeng@amss.ac.cn, luwei18@mails.ucas.ac.cn} 


\begin{abstract}
We study the relation between the Galois group $G$ of a linear difference-differential system and two classes $\mathcal{C}_1$ and $\mathcal{C}_2$ of groups that are the Galois groups of the specializations of the linear difference equation and the linear differential equation in this system respectively. We show that almost all groups in $\mathcal{C}_1\cup \mathcal{C}_2$ are algebraic subgroups of $G$, and there is a nonempty subset of $\mathcal{C}_1$ and a nonempty subset of $\mathcal{C}_2$ such that $G$ is the product of any pair of groups from these two subsets. These results have potential application to the computation of the Galois group of a linear difference-differential system. We also give a criterion for testing linear dependence of elements in a simple difference-differential ring, which generalizes Kolchin's criterion for partial differential fields.
\end{abstract}

\begin{keyword}
\texttt{Linear difference-differential equations, Galois groups, Specializations, simple difference-differential rings}
\MSC[2010] 12H05 \sep 12H10 \sep 39A06 \sep 34A30
\end{keyword}

\end{frontmatter}

\section{Introduction}
In \cite{hardouin2008differential}, a $\Sigma\Delta\Pi$-Galois theory was developed for $\Sigma\Delta\Pi$-linear systems and was applied to giving a group-theoretic proof of H\"{o}lder's Theorem that the Gamma function is hypertrancendental. Here $\Sigma$ is a set of automorphisms, $\Delta$ is a set of derivations and $\Pi$ is a set of other derivations that endow a differential structure on parameters. Inspired by the work in \cite{hardouin2008differential}, various Galois theories were developed for other kind of systems, for instance, $\sigma$-Galois theory for linear difference equations in \cite{OvchiinnikovWibmer:sigmagaloistheory}, difference Galois theory for linear differential equations in \cite{DivizioHardouinWibmer:differencegaloistheory} and so on. These theories provide powerful tools to study differential/difference algebraic properties of solutions defined by the corresponding linear systems. In particular, using these Galois theories, one is able to prove the hypertranscendence of functions arising in combinatorics, number theory etc,  see \cite{hardouin2008differential,CasaleFreitagNagloo:AxLindemannWeierstrass,DreyfusHardouinRoques:natureofgeneratingseries,HardouinMinchenkoOvchinnikov:differentialgaloisgroups,SchafkeSinger:Consistentsystems,DreysusHardouinRoquesSinger:walksinthequarterplane,AdamczewskiDreyfusHardouin:hypertranscendence,AdamczewskiDreyfusHardouinWibmer:algebraicindependence} and references therein.
 These applications essentially rely on the fact that Galois groups measure differential/difference algebraic relations among solutions, \ie the group is larger, the differential/difference algebraic relations are fewer. On the other hand, the problem of determining Galois groups is one of fundamental problems in Galois theories, which leads to many questions in other areas of mathematics. So far, except for linear differential/difference equations, there is no complete algorithm for computing the Galois groups of other kinds of equations. The readers are referred to \cite{Hendriks:determiningdifferencegaloisgroup,CompointSinger:computinggaloisgroups,DreyfusWeil:computingliealgebra,DreyfusWeil:computingliealgebra,Feng:computationofgaloisgroups,Arreche:computationparameterizedgaloisgroups,Feng:Hrushovskialgorithm,MoulayThomasWeilLucia:computingtheLiealgebra,Dreyfus:computinggaloisgroups,HardouinMinchenkoOvchinnikov:differentialgaloisgroups} and references therein for methods to calculate Galois groups.

 In this paper, we shall focus on $\sigma\delta$-linear systems, \ie systems of linear difference-differential equations with one single automorphism $\sigma$, one single derivative $\delta$ and without parameters. 
Many higher transcendental functions such as Hermite polynomial, Bessel polynomials, Tchebychev polynomials etc satisfy $\sigma\delta$-linear systems (see Chapters 7 and 10 of \cite{ErdelyiMagnusOberhettinger:highertranscendentalfunctions}). For simplicity, we take the Tchebychev polynomials as an example to state the main results of this paper. Let $C$ be an algebraically closed field of characteristic zero and let $C(m,t)$ be the $\sigma\delta$-field with $\sigma(m)=m+1$ and $\delta=d/dt$.  
Consider the Tchebychev polynomials
$$
    T_m(t)=\frac{m}{2}\sum_{\ell=0}^{[\frac{m}{2}]}\frac{(-1)^\ell(m-\ell-1)!}{\ell!(m-2\ell!)}(2t)^{m-2\ell}.
$$
Denote $Y=(T_m(t),T_{m+1}(t))^t$. Then $Y$ satisfies the following $\sigma\delta$-linear system:
\begin{equation}
\label{eqn:Tchebychev}
  \sigma(Y)=A(m,t)Y,\,\,\delta(Y)=B(m,t)Y
\end{equation}
where 
$$
A(m,t)=\begin{pmatrix}
0&1\\
-1&2t
\end{pmatrix},\,\,
B(m,t)=\begin{pmatrix}
\frac{(m-1)t}{1-t^2}&-\frac{m-1}{1-t^2}\\
\frac{m}{1-t^2}&-\frac{mt}{1-t^2}
\end{pmatrix}.
$$
Identified with an algebraic subgroup of $\GL_n(C)$, the $\sigma\delta$-Galois group $G$ of (\ref{eqn:Tchebychev}) over $C(m,t)$ is
$$
  G=\left\{\begin{pmatrix} \xi & 0 \\ 0 & \xi^{-1} \end{pmatrix} \mid \xi\in C^{\times}\right\}\cup \left\{\begin{pmatrix} 0 & \xi \\ \xi^{-1} & 0\end{pmatrix} \mid \xi\in C^{\times}\right\}
$$
(See Example~\ref{exam:sigmadeltagroups}).
Consider $\sigma(Y)=A(m,t)Y$ as a family of linear difference equations with parameter $t$. 
Setting $t$ to be a $c_1\in C\setminus\{\pm 1\}$ yields a linear difference equation $\sigma(Y)=A(m,c_1)Y$ over $C(m)$ whose $\sigma$-Galois group is
$$
  G_{\sigma,c_1}=\begin{cases}  
      \left\{\begin{pmatrix} \xi & 0 \\ 0 & \xi^{-1} \end{pmatrix} \mid \xi^q=1 \right\}& \mbox{$c_1+\sqrt{c_1^2-1}$ is a $q$-th root of unity}\\[4mm]
       \left\{\begin{pmatrix} \xi & 0 \\ 0 & \xi^{-1} \end{pmatrix} \mid \xi\in C^{\times}\right\}
       & \mbox{$c_1+\sqrt{c_1^2-1}$ is not a root of unity}
      \end{cases}
$$
(see Example~\ref{exam:sigmagroups}).
Similarly, setting $m$ to be a $c_2\in C$ yields a linear differential equation $\delta(Y)=B(c_2,t)Y$ over $C(t)$ whose $\delta$-Galois group is
$$
   G_{\delta,c_2}=\begin{cases}
       G & c_2\notin\Q \\
       \left\{\begin{pmatrix} \xi & 0 \\ 0 & \xi^{-1} \end{pmatrix}, \begin{pmatrix} 0 & \xi \\ \xi^{-1} & 0 \end{pmatrix} \mid \xi^q=1\right\} & c_2=\frac{p}{q}\in \Q
    \end{cases}
$$
(see Example~\ref{exam:deltagroups}).
One sees that both $G_{\sigma,c_1}$ and $G_{\delta,c_2}$ are algebraic subgroups of $G$. Moreover setting $U_1=\{c_1\in C \mid \,\,\mbox{$c_1+\sqrt{c_1^2-1}$ is not a root of unity}\}$ and $U_2=C$, one sees that both $U_1$ and $U_2$ are Zariski dense and $G=G_{\sigma,c_1}G_{\delta,c_2}$ for any $(c_1,c_2)\in U_1\times U_2$.
The goal of this paper is to show that these relations still hold true for general $\sigma\delta$-linear systems (see Theorems~\ref{thm:sigmasubgroups}, ~\ref{thm:deltasubgroups} and~\ref{thm:mainresult}, where we use $\stab(\frakm)$ and $\stab(\frakn)$ instead of $G_{\sigma,c_1}$ and $G_{\delta,c_2}$ respectively). We hope these results will be helpful for computing the $\sigma\delta$-Galois groups of linear difference-differential equations. Notice that the relation between the $\delta$-Galois group of $\delta(Y)=B(m,t)Y$ over $\overline{C(m)}(t)$ and the $\delta$-Galois group of $\delta(Y)=B(c_2,t)Y$ over $C(t)$ has been well investigated in \cite{Hrushovski:computingdifferentialgaloisgroups,Feng-Wibmer:differentialgaloisgropus}, and its difference analogue is presnted in \cite{feng2017difference}.

It is well-known that a finite number of elements in a $\delta$-field (resp., $\sigma$-field) are linearly dependent over the field of constants if and only if their Wronskian (resp., Casoration) equals zero (see page 9 of \cite{van2012galois} and page 271 of \cite{Cohn:differencealgebra}). For a $\Delta$-field, Kolchin (see page 86 of \cite{Kolchin:differentialalgebraandalgebraicgroups}) proved the following: a finite number of elements in a $\Delta$-field are linearly dependent over the field of constants if and only if all Wronskian-like determinants vanish. For elements in a $\Sigma\Delta$-ring, the above criteria are not valid in general (see Example~\ref{exam:counterexample}). However, under certain assumptions one can still have similar criteria. For instance, in \cite{LiWuZheng:testinglineardependence}, a criterion for hyperexponential elements in a $\Sigma\Delta$-ring is presented. In this paper, we shall present a criterion for  elements in a simple $\Sigma\Delta$-ring, which generalizes Kolchin's result.

The paper is organized as follows. In Section~\ref{sec:basicdefinitions}, we introduce some basic definitions about $\Sigma\Delta$-linear systems. In Section~\ref{sec:lineardependence}, we present a criterion for testing linear dependence of elements in a simple $\Sigma\Delta$-ring. In Section~\ref{sec:specializations}, we present some properties of the specializations of $\Sigma\Delta$-Picard-Vessiot rings. In Section~\ref{sec:pvrings}, we focus on $\sigma\delta$-Picard-Vessiot rings. We provides a sufficient condition for a $\sigma\delta$-Picard-Vessiot ring to be a $\sigma$-Picard-Vessiot ring. The main results of this paper are presented in Section~\ref{sec:mainresults}.

We are grateful to Carlos Arreche and Michael Wibmer for their valuable comments. 
\section{$\Sigma\Delta$-linear systems}
\label{sec:basicdefinitions}
In this section, we shall recall some basic concepts of $\Sigma\Delta$-linear systems. The readers are referred to the references \cite{hardouin2008differential} for details. All fields in this paper are of characteristic zero.

A $\Sigma\Delta$-ring is a ring $R$ with a set of automorphisms $\Sigma$ and a set of derivations $\Delta$ such that for any $\mu,\tau\in \Sigma\cup \Delta$, $\tau(\mu(r))=\mu(\tau(r))$ for all $r\in R$. The notations of $\Sigma\Delta$-field, $\Sigma\Delta$-ideal, $\Sigma\Delta$-homomorphism, etc. are defined similarly.
The $\Sigma\Delta$-constants $R^{\Sigma\Delta}$ of a $\Sigma\Delta$-ring $R$ is the set
$$
  R^{\Sigma\Delta}=\{r\in R \mid \sigma(r)=r \,\,\forall \sigma\in \Sigma,\mbox{and}\,\, \delta(r)=0 \,\,\forall \delta\in \Delta \}.
$$
A simple $\Sigma\Delta$-ring is a $\Sigma\Delta$-ring whose only $\Sigma\Delta$-ideals are $(0)$ and $R$.
Given a $\Sigma\Delta$-field $k$, a $\Sigma\Delta$-linear system is a system of equations of the form
\begin{equation}
\label{eqn:SigmaDeltasystem}
\begin{cases}
    \sigma_i(Y)=A_i Y,& A_i\in \GL_n(k),\,\, \forall\,\sigma_i\in \Sigma\\
    \delta_i(Y)=B_i Y,& B_i \in \gl_n(k),\,\,\forall\,\delta_i\in \Delta
\end{cases}
\end{equation}
where the $A_i, B_j$ satisfy the integrability condition:
\begin{align*}
  \sigma_i(A_j)A_i&=\sigma_j(A_i)A_j,\\
 \sigma_i(B_j)A_i&=\delta_j(A_i)+A_i B_j,\\
 \delta_i(B_j)+B_jB_i&=\delta_j(B_i)+B_iB_j
\end{align*}
for all $\sigma_i,\sigma_j\in \Sigma$ and all $\delta_i,\delta_j\in \Delta$. Assume that $k$ is a $\Sigma\Delta$-field.
\begin{defi}[Definition 6.10 of \cite{hardouin2008differential} with $\Pi=\emptyset$]
\label{def:sigmadeltaPVextension}
A $\Sigma\Delta$-ring $R$ is a $\Sigma\Delta$-Picard-Vessiot ring over a $\Sigma\Delta$-field $k$ for (\ref{eqn:SigmaDeltasystem}) if
\begin{enumerate}
\item  $R$ is a simple $\Sigma\Delta$-ring, and
 \item $R=k[\calX,\frac{1}{\det(\calX)}]$ where $\calX\in \GL_n(R)$ satisfies $\sigma_i(\calX)=A_i \calX \,\,\forall \sigma_i\in \Sigma$ and $\delta_i(\calX)=B_i \calX\,\,\forall \delta_i\in \Delta$.
\end{enumerate}
\end{defi}
  The invertible matrix $\calX$ in Definition~\ref{def:sigmadeltaPVextension} is usually called a fundamental solution matrix of the corresponding $\Sigma\Delta$-linear system.
\begin{defi}
Suppose that $R$ is a $\Sigma\Delta$-Picard-Vessiot ring over $k$ for (\ref{eqn:SigmaDeltasystem}).
The $\Sigma\Delta$-Galois group of (\ref{eqn:SigmaDeltasystem}) over $k$ (or $R$ over $k$) is defined to be the set of $\Sigma\Delta$-$k$-automorphisms of $R$ over $k$, denoted by $\Sigma\Delta$-$\Gal(R/k)$.
\end{defi}

 A $\Sigma\Delta$-Picard-Vessiot ring $R$ over $k$ for (\ref{eqn:SigmaDeltasystem}) alway exists. When $k^{\Sigma\Delta}$ is algebraically closed, $R$ is unique up to $\Sigma\Delta$-$k$-isomorphisms (see Proposition 6.16 of \cite{hardouin2008differential} for a proof), and $\SDgal(R/k)$ can be identified with an algebraic subgroup of $\GL_n(k^{\Sigma\Delta})$ defined over $k^{\Sigma\Delta}$ (see Proposition 6.18 of \cite{hardouin2008differential} for a proof). The second assertion is still true if the condition that $k^{\Sigma\Delta}$ is algebraically closed is replaced with $R^{\Sigma\Delta}=k^{\Sigma\Delta}$. For the difference case, \ie $\Sigma=\{\sigma\}$ and $\Delta=\emptyset$, this has already been proved in \cite{chatzidakis2007definitions,wibmerthesis}. 
 We shall prove the general case. Note that by Proposition 6.14 of \cite{hardouin2008differential} with $\Pi=\emptyset$, if $k^{\Sigma\Delta}$ is algebraically closed then $R^{\Sigma\Delta}=k^{\Sigma\Delta}$.

 Note that for every $\Sigma\Delta$-simple ring $R$, $R^{\Sigma\Delta}$ is a field (see Lemma 1.7 on page 6 of \cite{van2006galois} for a proof of difference case). 
\begin{lem}
\label{lm:linearlydisjoint}
Suppose that $S\subseteq T$ are two $\Sigma\Delta$-rings and $S$ is $\Sigma\Delta$-simple. Then $S$ and $T^{\Sigma\Delta}$ are linearly disjoint over $S^{\Sigma\Delta}$.
\end{lem}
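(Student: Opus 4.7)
The plan is to run the standard shortest-relation argument, using $\Sigma\Delta$-simplicity of $S$ to normalize one coefficient to $1$. Concretely, by contrapositive I would take $t_1,\ldots,t_n \in T^{\Sigma\Delta}$ that are $S^{\Sigma\Delta}$-linearly independent and show they remain $S$-linearly independent. Assume for contradiction that there is a nontrivial $S$-linear relation among a subset of them, and choose one with the fewest nonzero coefficients; after relabeling, write it as $\sum_{i=1}^{m} s_i t_i = 0$ with all $s_i \neq 0$ and $m$ minimal.

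The first key step is to normalize $s_m = 1$. For this I would introduce
\[
I = \Bigl\{\,s \in S \;\Big|\; \exists\, r_1,\ldots,r_{m-1} \in S,\ \sum_{i=1}^{m-1} r_i t_i + s\,t_m = 0 \Bigr\}.
\]
Because the $t_i$ lie in $T^{\Sigma\Delta}$, applying any $\sigma \in \Sigma$ or $\delta \in \Delta$ to such a relation preserves its shape (the $t_i$ being pulled through untouched), so $I$ is closed under $\Sigma$ and $\Delta$; it is clearly also an ideal of $S$. Since $s_m \in I$ we have $I \neq 0$, so $\Sigma\Delta$-simplicity of $S$ forces $I = S$, hence $1 \in I$. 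That yields a relation $\sum_{i=1}^{m-1} u_i t_i + t_m = 0$, and the minimality of $m$ forces every $u_i$ to be nonzero (otherwise a shorter nontrivial relation exists).

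The second key step is to pull the $u_i$ into $S^{\Sigma\Delta}$. For any $\sigma \in \Sigma$, apply $\sigma$ to the normalized relation and subtract from the original; since the term $t_m$ has coefficient $1$ in both, it cancels, leaving a relation of length at most $m-1$, which by minimality must be trivial. Hence $\sigma(u_i) = u_i$ for every $i$. Similarly, applying any $\delta \in \Delta$ annihilates the $t_m$ term outright, yielding $\sum_{i=1}^{m-1} \delta(u_i) t_i = 0$, again trivial by minimality, so $\delta(u_i) = 0$. Thus each $u_i \in S^{\Sigma\Delta}$, contradicting the assumed $S^{\Sigma\Delta}$-linear independence of $t_1,\ldots,t_n$.

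The main obstacle, and the only place where the hypotheses are really used, is the normalization step: one has to know that the ``coefficient ideal'' $I$ is stable under both $\Sigma$ and $\Delta$ (this is where $t_i \in T^{\Sigma\Delta}$ matters) and then invoke $\Sigma\Delta$-simplicity of $S$ to force $1 \in I$. Everything else is the standard shortest-relation bookkeeping, and the fact that $S^{\Sigma\Delta}$ is a field (noted in the paragraph preceding the lemma) ensures that ``linearly independent over $S^{\Sigma\Delta}$'' is a meaningful and well-behaved notion throughout.
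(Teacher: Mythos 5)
Your proof is correct and is exactly the argument the paper has in mind: the paper's "proof" simply defers to Lemma 1.1.6 of Wibmer's thesis, whose proof is this same shortest-relation argument (normalize one coefficient to $1$ via $\Sigma\Delta$-simplicity of the coefficient ideal, then use minimality to show the remaining coefficients are constants). No gaps worth noting.
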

\begin{proof}
Use an argument similar to the proof of Lemma 1.1.6 of \cite{wibmerthesis}.
\end{proof}
Given two $n\times n$ matrices $(a_{i,j}), (b_{i,j})$, we shall use $(a_{i,j})\otimes (b_{i,j})$ to stand for the matrix $(\sum_{l=1}^n a_{i,l}\otimes b_{l,j})$.
 \begin{prop}
 \label{prop:galoisgroups} Suppose that $R$ is a $\Sigma\Delta$-Picard-Vessiot ring over $k$ for (\ref{eqn:SigmaDeltasystem}) and $R^{\Sigma\Delta}=k^{\Sigma\Delta}$. Then $\SDgal(R/k)$ can be identified with the set of $k^{\Sigma\Delta}$-points of an affine algebraic group defined over $k^{\Sigma\Delta}$ with $(R\otimes_k R)^{\Sigma\Delta}$ as its coordinate ring.
 \end{prop}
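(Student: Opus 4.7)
Set $C := k^{\Sigma\Delta} = R^{\Sigma\Delta}$ and endow $R \otimes_k R$ with the diagonal $\Sigma\Delta$-structure $\tau(a \otimes b) = \tau(a) \otimes \tau(b)$. The plan is to show that $H := (R \otimes_k R)^{\Sigma\Delta}$ is a finitely generated $C$-Hopf algebra whose group of $C$-points recovers $\SDgal(R/k)$.

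First, I would exhibit a universal element of $H$. Writing $\calX_1 = \calX \otimes 1$ and $\calX_2 = 1 \otimes \calX$ in $\GL_n(R \otimes_k R)$, both satisfy system~(\ref{eqn:SigmaDeltasystem}), so a direct computation gives $\sigma_i(\calX_1^{-1}\calX_2) = \calX_1^{-1}\calX_2$ and $\delta_i(\calX_1^{-1}\calX_2) = 0$; hence $\calZ := \calX_1^{-1}\calX_2 \in \GL_n(H)$. Since $R \otimes_k R = R[\calX_2, \det(\calX_2)^{-1}] = R[\calZ, \det(\calZ)^{-1}]$ as $R$-algebras, the natural map $R \otimes_C H \to R \otimes_k R$ is surjective, and it is injective by Lemma~\ref{lm:linearlydisjoint} applied to $R \subseteq R \otimes_k R$; this yields the key isomorphism $R \otimes_C H \cong R \otimes_k R$. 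Moreover, if $H_0 := C[\calZ, \det(\calZ)^{-1}] \subseteq H$, then the image of $R \otimes_C H_0$ already covers all of $R \otimes_k R$, so $R \otimes_C (H/H_0) = 0$; because $C$ is a field and $R$ is a nonzero $C$-vector space, $R$ is faithfully flat over $C$, forcing $H = H_0$. Presenting $H$ as a quotient of $C[Y_{ij}, \det(Y)^{-1}]$ exhibits $\mathrm{Spec}(H)$ as a closed subscheme of $\GL_{n,C}$.

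Next I would set up the group-theoretic bijection $\SDgal(R/k) \leftrightarrow \Hom_C(H, C)$. Given $\phi \in \SDgal(R/k)$, the map $m_\phi : R \otimes_k R \to R$, $a \otimes b \mapsto a\phi(b)$, is a $\Sigma\Delta$-homomorphism and so restricts to a $C$-algebra map $H \to R^{\Sigma\Delta} = C$. Conversely, any $\psi : H \to C$ extends to $\psi_R : R \otimes_C H \to R$, $r \otimes h \mapsto r\psi(h)$; transporting via $R \otimes_C H \cong R \otimes_k R$ and composing with $b \mapsto 1 \otimes b$ defines a map $\phi : R \to R$. The identity $\tau(1 \otimes b) = 1 \otimes \tau(b)$ for $\tau \in \Sigma \cup \Delta$ makes $\phi$ a $\Sigma\Delta$-morphism fixing $k$, and invertibility follows from $\phi(\calX) = \calX \cdot \psi(\calZ)$ with $\psi(\calZ) \in \GL_n(C)$. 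The Hopf algebra structure on $H$ (comultiplication, counit, antipode) arises from the canonical identification $R \otimes_k R \otimes_k R \cong R \otimes_C H \otimes_C H$ together with the cocycle identity $\calZ_{13} = \calZ_{12}\calZ_{23}$ in the triple tensor product; this is exactly what makes composition of Galois automorphisms correspond to multiplication in $\mathrm{Spec}(H)$.

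The main difficulty is the descent step $H = H_0$ via faithful flatness of $R$ over $C$, together with the bookkeeping required to verify that the bijection above is an isomorphism of group schemes and not merely of sets; the other ingredients amount to direct computations once the isomorphism $R \otimes_C H \cong R \otimes_k R$ is in place.
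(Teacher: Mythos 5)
Your proposal is correct and follows essentially the same route as the paper: the same universal element $\calZ=\calX^{-1}\otimes\calX$, the same key isomorphism $R\otimes_{k^{\Sigma\Delta}}(R\otimes_k R)^{\Sigma\Delta}\cong R\otimes_k R$ obtained from Lemma~\ref{lm:linearlydisjoint}, the comultiplication read off from the triple tensor product, and the same identification with $\Hom_{k^{\Sigma\Delta}}(S,k^{\Sigma\Delta})$; your faithful-flatness argument for $(R\otimes_k R)^{\Sigma\Delta}=k^{\Sigma\Delta}[\calZ,1/\det(\calZ)]$ in fact fills in a step the paper only asserts. The one point worth making explicit is that the endomorphism $\phi$ built from $\psi\in\Hom_{k^{\Sigma\Delta}}(S,k^{\Sigma\Delta})$ is injective as well as surjective: surjectivity does follow from $\psi(\calZ)\in\GL_n(k^{\Sigma\Delta})$, but injectivity needs either the antipode (so that $\psi(\calZ)^{-1}$ also defines a point) or, as in the paper, the observation that $\ker\phi$ is a proper $\Sigma\Delta$-ideal of the $\Sigma\Delta$-simple ring $R$.
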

\begin{proof}
Let $\calX$ be a fundamental solution matrix in $\GL_n(R)$. Set $\calZ=\calX^{-1}\otimes \calX$. Then $\calZ\in \GL_n((R\otimes_k R)^{\Sigma\Delta})$ and moreover one can verity that $(R\otimes_k R)^{\Sigma\Delta}=k^{\Sigma\Delta}[\calZ,1/\det(\calZ)]$. Denote $S=k^{\Sigma\Delta}[\calZ,1/\det(\calZ)]$. Note that $R$ can be viewed as a subring of $R\otimes_k R$. Due to Lemma~\ref{lm:linearlydisjoint}, the homomorphism $R\otimes_{k^{\Sigma\Delta}} S\rightarrow R\otimes_k R$ given by $a\otimes b\mapsto (a\otimes 1)b$ is $\Sigma\Delta$-isomorphic. Denote by $\varphi$ the inverse map of the above isomorphism, \ie 
 \begin{equation}
 \label{eqn:varphi}
\varphi: \xymatrix@R=3pt{ R\otimes_k R \ar[r] &  R\otimes_{k^{\Sigma\Delta}} S \\
                             a\otimes b \ar@{|->}[r] &  (a\otimes 1)b(\calX\otimes \calZ)}.
\end{equation}
and denote by $\rho$ the composite of $\varphi$ with the embedding $R\rightarrow R\otimes_k R, a\mapsto 1\otimes a$. Then we have the following $\Sigma\Delta$-homomorphism:
\[
\id\otimes\rho: \xymatrix@R=3pt{ R\otimes_k R \ar[r] & R\otimes_k R\otimes_{k^{\Sigma\Delta}} S \\
                             a\otimes b \ar@{|->}[r] &  a\otimes b(\calX\otimes \calZ)}.
\]
Note that $(R\otimes_k R\otimes_{k^{\Sigma\Delta}} S)^{\Sigma\Delta}=S\otimes_{k^{\Sigma\Delta}} S$. Thus $\id\otimes \rho$ induces the comultiplication map $\Delta: S\rightarrow S\otimes_{k^{\Sigma\Delta}} S, \calZ\mapsto \calZ\otimes \calZ$. The $\Sigma\Delta$-homomorphism $R\otimes_k R\rightarrow R, a\otimes b\mapsto ab$ induces the counit $\epsilon: S\rightarrow k^{\Sigma\Delta}$. These make $S$ become the coordinate ring of an affine algebraic group over $k^{\Sigma\Delta}$.

It remains to show that there is a group isomorphism from $\SDgal(R/k)$ to $\Hom_{k^{\Sigma\Delta}}(S, k^{\Sigma\Delta})$. For $\tau\in \SDgal(R/k)$, we define
\[
\phi_\tau: \xymatrix@R=3pt{S \ar@{^{(}->}[r] & R\otimes_k R \ar[r]^{\id\otimes \tau} & R\otimes_k R \ar[r] & R \\
                             \calZ \ar@{^{(}->}[r] &   \calX^{-1} \otimes \calX  \ar@{|->}[r] & \calX^{-1} \otimes \tau(\calX) \ar@{|->}[r] & \calX^{-1}\tau(\calX) }.
\]
Then $\phi_\tau\in \Hom_{k^{\Sigma\Delta}}(S,k^{\Sigma\Delta})$ and the map $\phi: \SDgal(R/k)\rightarrow \Hom_{k^{\Sigma\Delta}}(S,k^{\Sigma\Delta})$ given by $\tau\mapsto \phi_\tau$ is a group homomorphism. If $\phi_\tau=\epsilon$ then $\tau(\calX)=\calX$, \ie $\tau=\id$ and so $\phi$ is injective. Now suppose $\gamma\in \Hom_{k^{\Sigma\Delta}}(S,k^{\Sigma\Delta})$. We have the following $\Sigma\Delta$-homomorphism:
\begin{equation*}
\label{eqn:rho}
\rho_\gamma: \xymatrix@R=3pt{R \ar[r] & R\otimes_k R \ar[r] & R\otimes_{k^{\Sigma\Delta}} S \ar[r] & R \\
                             \calX \ar@{|->}[r] &   1 \otimes \calX  \ar@{|->}[r] & \calX \otimes \calZ \ar@{|->}[r] & \calX \gamma(\calZ)}.
\end{equation*}
Since $\gamma(\calZ)$ is invertible, $\rho_\gamma$ is surjective. Furthermore, as $R$ is $\Sigma\Delta$-simple, $\rho_\gamma$ is injective. Therefore $\rho_\gamma\in \SDgal(R/k)$. One can verify that $\phi(\rho_\gamma)=\gamma$. Therefore $\phi$ is surjective and so $\phi$ is isomorphic.
\end{proof}

\begin{rem}
The proof of the above proposition implies that $R$ is actually the coordinate ring of some $\SDgal(R/k)$-torsor over $k$.
\end{rem}

For the remainder of this paper, when we speak of the $\Sigma\Delta$-Galois group of $R$ over $k$, we usually refer to $\Hom_{k^{\Sigma\Delta}}((R\otimes_k R)^{\Sigma\Delta}, k^{\Sigma\Delta})$. Let $\K$ be the total ring of fractions of $R$. Then each $\tau\in \SDgal(R/k)$ can be uniquely extended into an antomorphism of $\K$ over $k$. We still use $\tau$ to denote this extended automorphism and we have that $
\SDgal(R/k)=\SDgal(\K/k)$. 

\section{Linear dependence of elements in a simple $\Sigma\Delta$-ring}
\label{sec:lineardependence}
In this section, we shall give a criterion for testing linear dependence of elements in a simple $\Sigma\Delta$-ring. This criterion will be used in the later sections and it may be of independent interest. The following notation will be used frequently.
\begin{notation}
$\Theta$ is the semigroup generated by $\Sigma\cup \Delta$.
\end{notation}
Suppose that $R$ is a simple $\Sigma\Delta$-ring and $a_1,\dots,a_m\in R$. Let $W$ be a subset of $R[X_1,\dots,X_m]$, where $X_1,\dots,X_m$ are indeterminates. We say $W$ is closed under the action of $\Theta$ if $f_\theta\in W$ for all $\theta\in \Theta, f\in W$, where $f_\theta$ stands for the polynomial obtained by applying $\theta$ to the coefficients of $f$.
\begin{lem}
\label{lm:solutions}
Suppose that $W\subset R[X_1,\dots,X_m]$ is a set of linear homogeneous polynomials and $W$ is closed under the action of $\Theta$. Then $W$ has a nontrivial zero in $R^m$ if and only if for any $f_1,\dots,f_m\in W$, $\det(M_{f_1,\dots,f_m})=0$, where $M_{f_1,\dots,f_m}$ stands for the coefficient matrix of $f_1,\dots,f_m$.
\end{lem}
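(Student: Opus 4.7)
The statement splits cleanly into two directions, and I would handle them separately; the real content lies in the forward direction.

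For the forward implication, suppose $(a_1,\dots,a_m)\in R^m$ is a nontrivial common zero of $W$. For any $f_1,\dots,f_m\in W$, setting $M=M_{f_1,\dots,f_m}$, we have $M(a_1,\dots,a_m)^{t}=0$, and multiplying by the adjugate of $M$ gives $\det(M)\cdot a_i=0$ for every $i$. Since $R$ may contain zero divisors, this does not immediately yield $\det(M)=0$. My plan is to gather all such determinants into an honest $\Sigma\Delta$-ideal and invoke simplicity. Let $J$ be the ideal of $R$ generated by the elements $\det(M_{g_1,\dots,g_m})$ as $g_1,\dots,g_m$ range over $W$. I would show that the closure of $W$ under $\Theta$ makes $J$ stable under both $\Sigma$ and $\Delta$: under $\sigma\in\Sigma$ since $\sigma$ is a ring automorphism and $\sigma(\det M_{g_1,\dots,g_m})=\det M_{\sigma(g_1),\dots,\sigma(g_m)}$; under $\delta\in\Delta$ via the Leibniz-type expansion $\delta(\det M)=\sum_i \det M^{(i)}$, where $M^{(i)}$ differs from $M$ only in its $i$-th row, now the coefficient vector of $\delta(g_i)\in W$. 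The same adjugate computation applied to any $M_{g_1,\dots,g_m}$ shows that every generator of $J$ annihilates each $a_i$, so $J\cdot a_i=0$ for all $i$. Because some $a_i$ is nonzero, $J\ne R$; and because $R$ is $\Sigma\Delta$-simple, $J=(0)$, giving $\det(M)=0$.

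For the reverse implication, I would assume every $\det(M_{f_1,\dots,f_m})$ with $f_i\in W$ vanishes and let $r$ be the largest integer such that some $r$-tuple in $W$ admits a nonzero $r\times r$ minor of its coefficient matrix. The hypothesis immediately forces $r\le m-1$, since the only $m\times m$ minor of $M_{f_1,\dots,f_m}$ is the full determinant. If $r=0$, every element of $W$ is zero and any nonzero vector is a common zero. Otherwise, fix $f_1,\dots,f_r\in W$ and columns $j_1<\cdots<j_r$ realizing a nonzero minor $\Delta$, and pick $j_0\in\{1,\dots,m\}\setminus\{j_1,\dots,j_r\}$, which exists because $r<m$. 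For every $f=\sum c'_j X_j\in W$, the $(r+1)\times(r+1)$ minor of $M_{f_1,\dots,f_r,f}$ on columns $j_1,\dots,j_r,j_0$ must vanish by maximality of $r$; otherwise $\{f_1,\dots,f_r,f\}$ would witness rank $r+1$. Expanding this determinant along its last row produces a single linear identity of the shape $c'_{j_0}\Delta+\sum_{s=1}^{r}(-1)^{r+s+1}c'_{j_s}\Delta_s=0$, where the $\Delta_s$ are Cramer-style minors determined by the fixed data $f_1,\dots,f_r,j_1,\dots,j_r,j_0$. Setting $a_{j_0}=\Delta$, $a_{j_s}=(-1)^{r+s+1}\Delta_s$ for $s=1,\dots,r$, and $a_j=0$ for the remaining indices, produces a common zero of $W$ which is nontrivial because $\Delta\ne 0$.

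The main obstacle is the forward direction. The presence of zero divisors in $R$ blocks a one-line adjugate argument, and passing to the $\Sigma\Delta$-ideal of all top-order determinants requires a careful check that the Leibniz expansion of $\delta(\det M)$ stays among the generators; this is precisely where the closure of $W$ under $\Theta$ is essential, and then simplicity of $R$ closes the argument. The reverse direction, by contrast, is a rank/cofactor computation that uses neither the $\Theta$-closure nor the simplicity of $R$ and goes through over any commutative ring.
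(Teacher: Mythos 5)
Your proof is correct. The forward direction is essentially the paper's argument for that implication: the adjugate identity gives $\det(M_{g_1,\dots,g_m})\,a_i=0$, and the closure of $W$ under $\Theta$ (via $\sigma(\det M)=\det M_{g_{1,\sigma},\dots,g_{m,\sigma}}$ and the Leibniz expansion of $\delta(\det M)$) turns the collection of top determinants into a $\Sigma\Delta$-ideal killed by simplicity; the paper phrases this through the annihilator of a nonzero coordinate $a_1$ and a single hypothetical nonzero determinant $d$ rather than your ideal $J$, but the content is identical. Your reverse direction, however, is a genuinely different and more elementary route. The paper proves it by induction on $m$: it passes to the $R$-module $\tilde{W}$ generated by $W$, intersects with $R[X_1,\dots,X_{m-1}]$, uses $\Sigma\Delta$-simplicity to normalize an element outside that intersection so its $X_m$-coefficient is $1$, verifies the inductive hypothesis for the intersection via the Cauchy--Binet formula, and lifts the inductive solution. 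Your argument is instead the McCoy-style rank/cofactor construction: choose $r\le m-1$ maximal with a nonvanishing $r\times r$ minor $\Delta$, and take the vector of signed cofactors padded with zeros; every $f\in W$ vanishes there because the corresponding $(r+1)\times(r+1)$ minor is forced to vanish by maximality (reordering the columns only changes a sign, so this is harmless), and the vector is nontrivial since one coordinate equals $\Delta$. This is shorter, avoids the induction and Cauchy--Binet, and, as you observe, establishes the stronger fact that this implication holds for an arbitrary set of linear forms over any nonzero commutative ring, using neither the $\Theta$-closure nor the simplicity of $R$; your version thereby isolates exactly where the $\Sigma\Delta$-structure is genuinely needed, namely only in passing from a nontrivial zero to the vanishing of the determinants. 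The paper's heavier machinery buys nothing extra for this lemma itself, though its normalization trick recurs in the subsequent proposition on linear dependence, which may explain the authors' choice.
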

\begin{proof}
We prove the sufficiency by induction on $m$. When $m=1$, the assertion is obviously true since $W=\{0\}$. Suppose that $m>1$. Let $\tilde{W}$ be the $R$-module generated by $W$. Then it suffices to show that $\tilde{W}$ has a nontrivial zero in $R^m$. Since $W$ is closed under the action of $\Theta$, so is $\tilde{W}$. Set
$$
   \tilde{W}_m=\tilde{W}\cap R[X_1,\dots,X_{m-1}].
$$
Then $\tilde{W}_m$ is also a $R$-module that is closed under the action of $\Theta$.
If $\tilde{W}_m=\tilde{W}$ then $(0,\dots,0,1)$ is a nontrivial zero of $\tilde{W}$ and the assertion holds. Suppose that $\tilde{W}_m\neq\tilde{W}$. We claim that for any $h_1,\dots,h_{m-1}\in \tilde{W}_m$, $\det(M_{h_1,\dots,h_{m-1}})=0$. Let $f=a_1X_1+\dots+a_mX_m\in \tilde{W}\setminus\tilde{W}_m$. Then $a_m\neq 0$. Since $R$ is $\Sigma\Delta$-simple, there are $b_1,\dots,b_\ell\in R$ and $\theta_1,\dots,\theta_\ell\in \Theta$ such that $\sum_{i=1}^\ell b_i\theta_i(a_m)=1$. Then $h_m=\sum_{i=1}^\ell b_i f_{\theta_i}=\tilde{a}_1X_1+\tilde{a}_2X_2+\dots+X_m\in \tilde{W}$. Suppose that $h_1,\dots,h_{m-1}\in \tilde{W}_m$. It is easy to see that $\det(M_{h_1,\dots,h_{m-1}})=\det(M_{h_1,\dots,h_m})$. Write $h_i=\sum_{j=1}^s c_{ij} f_j$ with $c_{ij}\in R$, where $i=1,2,\dots,m$ and $f_j\in W$. Without loss of generality, we may assume that $s\geq m$. Let $D=(c_{ij})_{1\leq i \leq m,1\leq j\leq s}$ and $T=M_{f_1,\dots,f_s}$. Then $M_{h_1,\dots,h_m}=DT$ and by Cauchy-Binet formula,
\begin{align*}
   \det(M_{h_1,\dots,h_m})&=\det(DT)\\
   &=\sum_{1\leq l_1<\dots<l_m\leq s} D\left(\begin{array}{cccc} l_1  & \dots&l_m \\ 1& \dots & m\end{array}\right) T \left(\begin{array}{cccc} 1  & \dots&m \\ l_1& \dots & l_m\end{array}\right)=0
\end{align*}
where $D(\cdot), T(\cdot)$ denotes the $m$ order minors of $D$ and $T$ respectively. The last equality holds because $T \left(\begin{array}{cccc} 1  & \dots&m \\ l_1& \dots & l_m\end{array}\right)=\det(M_{f_{l_1},\dots,f_{l_m}})=0$. Hence $\det(M_{h_1,\dots,h_{m-1}})=0$. This proves our claim. By induction hypothesis, $\tilde{W}_m$ has a nontrivial zero in $R^{m-1}$, say $(c_1,\dots,c_{m-1})$. Set $c_m=-\sum_{i=1}^{m-1}\tilde{a}_ic_i$. Then $h_m(c_1,\dots,c_m)=0$. For each $g\in \tilde{W}$, one has that $g-b_g h_m\in \tilde{W}_m$ where $b_g$ is the coefficient of $X_m$ in $g$. This implies that $g(c_1,\dots,c_m)-b_gh_m(c_1,\dots,c_m)=0$. Consequently, $g(c_1,\dots,c_m)=0$ and thus $(c_1,\dots,c_m)$ is a nontrivial zero of $W$.

Assume that $W$ has a nontrivial zero $(a_1,\dots,a_m)$ in $R^m$. Without loss of generality, we may assume that $a_1\neq 0$. For any $g_1,\dots,g_m\in W$, one has that
$$
   M_{g_1,\dots,g_m}^* (g_1,\dots,g_m)^t=\det(M_{g_1,\dots,g_m})(X_1,\dots,X_m)^t
$$
where $(\cdot)^*$ stands for the adjoint matrix. Substituting $a_i$ for $X_i$ in the above equality yields
$
\det(M_{g_1,\dots,g_m})a_i=0, \forall\,\,g_1,\dots,g_m\in W, \forall\,\,i=1,\dots,m.
$
In particular
\begin{equation}
\label{eqn:determinant}
   \det(M_{g_1,\dots,g_m})a_1=0, \forall\,\,g_1,\dots,g_m\in W.
\end{equation}
Suppose that there exist $f_1,\dots,f_m\in W$ such that $d=\det(M(f_1,\dots,f_m))\neq 0$. For each $\sigma\in \Sigma$, since $f_{i,\sigma}\in W$ and $M_{f_{1,\sigma},\dots,f_{m,\sigma}}=\sigma(M_{f_1,\dots,f_m})$, by (\ref{eqn:determinant}), $\sigma(d)a_1=0$. For each $\delta\in \Delta$, since
$$
   \delta(d)=\sum_{i=1}^m \det(M_{f_1,\dots,f_{i,\delta},\dots,f_m}),
$$
one has that $\delta(d)a_1=0$. Repeating the above process yields that $\theta(d)a_1=0$ for all $\theta\in \Theta$. Hence the ideal $\{b\in R \mid ba_1=0\}\subseteq R$ contains the $\Sigma\Delta$-ideal generated by $d$. Since $R$ is $\Sigma\Delta$-simple and $d\neq 0$, $1\in \{b\in R \mid ba_1=0\}$. This implies that $a_1=0$, a contradiction. Therefore for any $f_1,\dots,f_m\in W$, $\det(M_{f_1,\dots,f_m})=0$.
\end{proof}

\begin{prop}
\label{prop:lineardependence}
Suppose that $R$ is a simple $\Sigma\Delta$-ring and $a_1,\dots,a_m\in R$. Then
 $a_1,\dots,a_m$ are linearly dependent over $R^{\Sigma\Delta}$ if and only if for any $\theta_1$, $\dots,\theta_m\in \Theta$, we have
$\det((\theta_i(a_j))_{1\leq i,j \leq m})=0.$
\end{prop}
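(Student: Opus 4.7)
The plan is to derive the proposition from Lemma~\ref{lm:solutions} applied to the $\Theta$-orbit of a single linear form, and then to promote a ring-theoretic common zero to one with $\Sigma\Delta$-constant coordinates via a minimality argument exploiting $\Sigma\Delta$-simplicity.

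Set $f = a_1X_1+\dots+a_mX_m$ and $W = \{f_\theta : \theta\in\Theta\}$, which is $\Theta$-closed. The forward direction is immediate: if $\sum c_ja_j=0$ with $c_j\in R^{\Sigma\Delta}$ not all zero, the Leibniz rule (combined with $\sigma(c_j)=c_j$ and $\delta(c_j)=0$) gives $\theta(c_ja_j)=c_j\theta(a_j)$ for every $\theta\in\Theta$, so $(c_1,\dots,c_m)$ is a common zero of every $f_\theta$; Lemma~\ref{lm:solutions} then forces every $m\times m$ determinant $\det((\theta_i(a_j))_{i,j})$ to vanish.

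For the converse, Lemma~\ref{lm:solutions} supplies a nonzero tuple in $V := \{(v_1,\dots,v_m)\in R^m : f_\theta(v_1,\dots,v_m)=0 \text{ for every } \theta\in\Theta\}$. A direct check shows that $V$ is stable under each $\delta\in\Delta$ (apply $\delta$ to $\sum\theta(a_i)v_i=0$ and cancel the $\sum\delta\theta(a_i)v_i$ term, which vanishes because $\delta\theta\in\Theta$) and under each $\sigma^{-1}$ for $\sigma\in\Sigma$ (apply $\sigma^{-1}$ to the relation indexed by $\sigma\theta\in\Theta$). Choose $(r_1,\dots,r_m)\in V\setminus\{0\}$ with a minimum number of nonzero coordinates and, after reindexing, assume $r_1\ne 0$. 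Let $V'$ be the $R$-submodule of $V$ of tuples supported inside the support of $(r_1,\dots,r_m)$; it inherits closure under $\sigma^{-1}$ and $\delta$. The first-coordinate projection $J$ of $V'$ is then an ideal of $R$ closed under each $\sigma^{-1}$ and each $\delta$ and containing $r_1\ne 0$. Saturating under all forward iterates of the $\sigma_i\in\Sigma$ produces an increasing union of ideals $\widetilde{J}\supseteq J$ that is a genuine $\Sigma\Delta$-ideal (the $\sigma_i$-closure is automatic, and $\delta$-closure uses commutativity of $\Sigma$ and $\Delta$ together with $\delta$-stability of $J$); by $\Sigma\Delta$-simplicity $\widetilde{J}=R$, and writing $1=\sigma_{i_1}^{n_1}\cdots\sigma_{i_k}^{n_k}(y)$ for some $y\in J$ yields $y=1\in J$ upon applying the inverse automorphism. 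Hence some $(1,t_2,\dots,t_m)$ lies in $V'$. For each $\sigma\in\Sigma$, the tuple $(0,t_2-\sigma^{-1}(t_2),\dots,t_m-\sigma^{-1}(t_m))\in V'$ has strictly fewer nonzero entries than $(r_1,\dots,r_m)$, so by minimality it vanishes, forcing $\sigma(t_i)=t_i$; analogously $\delta(t_i)=0$ for every $\delta\in\Delta$. Thus $t_i\in R^{\Sigma\Delta}$, and evaluating $f\in W$ at $(1,t_2,\dots,t_m)\in V$ yields the nontrivial relation $a_1+t_2a_2+\dots+t_ma_m=0$.

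The step I expect to require the most care is reconciling the natural $\sigma^{-1}$-stability of $V$ with the forward-stability hypothesis built into $\Sigma\Delta$-simplicity. The $\Sigma$-saturation trick above is precisely what bridges this mismatch and lets the normalization step $1\in J$ succeed; once that normalization is available, the minimality of $(r_1,\dots,r_m)$ makes the $\Sigma\Delta$-constancy of the $t_i$'s automatic and the final relation falls out by evaluating $f$.
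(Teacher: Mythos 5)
Your proof is correct, and it follows the same skeleton as the paper's: Lemma~\ref{lm:solutions} produces a nontrivial common zero of the $\Theta$-orbit of $\sum_i a_iX_i$, the zero set is shown to be stable under each $\delta$ and each $\sigma^{-1}$, one coordinate is normalized to $1$, and the normalized solution is shown to have $\Sigma\Delta$-constant entries. The last two steps, however, are carried out genuinely differently. For the normalization, the paper writes $1=\sum_i u_i\theta_i(b_m)$ (simplicity applied to the single element $b_m$) and then twists the whole identity by a suitable power of $\sigma^{-1}$ so that the resulting combination stays inside the $\Theta^{-}$-stable solution set; you instead project the solution module onto a coordinate, obtain an ideal stable under $\sigma^{-1}$ and $\delta$, and saturate it under forward iterates of $\Sigma$ to manufacture a genuine $\Sigma\Delta$-ideal to which simplicity applies — this works because the directedness of the family $\{\theta(J)\}$ follows from $J\subseteq\sigma_i(J)$, and commutativity gives $\delta$-closure of the union. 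More substantially, the paper finishes by induction on $m$: if $\sigma^{-1}(\tilde{\bfb})-\tilde{\bfb}$ or $\delta(\tilde{\bfb})$ is nonzero, it is a nontrivial zero of the system in $m-1$ variables, whence all the smaller determinants vanish by Lemma~\ref{lm:solutions} and the induction hypothesis applies. You avoid that induction entirely by choosing the solution of minimal support at the outset; the same difference vectors then have strictly smaller support and are forced to vanish outright. Both routes validly bridge the mismatch between the backward ($\sigma^{-1}$) stability of the solution set and the forward closure demanded of a $\Sigma\Delta$-ideal, and your minimal-support finish is arguably the cleaner one, since it removes a layer of induction (the induction inside Lemma~\ref{lm:solutions} of course remains).
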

\begin{proof}
Suppose that $c_1a_1+\dots+c_ma_m=0$ for some $c_1,\dots,c_m\in R^{\Sigma\Delta}$, not all zero. Then for any $\theta_1,\dots,\theta_m\in \Theta$, $c_1\theta_i(a_1)+\dots+c_m\theta_i(a_m)=0$ for all $i=1,\dots,m$. In matrix form, $(\theta_i(a_j))\bfc=0$ where $\bfc=(c_1,\dots,c_m)^t$. Multiplying the adjoint matrix of $(\theta_i(a_j))$, we obtain $\det((\theta_i(a_i)))\bfc=0$. Since not all $c_i$ are zero, $\det((\theta_i(a_j)))=0$.

Conversely, suppose that for any $\theta_1,\dots,\theta_m\in \Theta$, $\det((\theta_i(a_i)))=0$.
We shall show that $a_1,\dots,a_m$ are linearly dependent over $R^{\Sigma\Delta}$ by induction on $m$. If $m=1$ there is nothing to prove. Assume that $m>1$. Set $W=\{\sum_{i=1}^m \theta(a_i)X_i \mid \forall\,\,\theta\in \Theta\}$. Then $W$ is closed under the action of $\Theta$. By Lemma~\ref{lm:solutions}, $W$ has a nontrivial zero in $R^m$.
Let $U$ be the set of all zeroes of $W$ in $R^m$. Then $U\neq \{(0,\dots,0)\}$. Let $\Theta^{-}$ be the semigroup generated by $\{\sigma^{-1} \mid \forall\,\,\sigma\in \Sigma\}\cup \Delta$. We first show that for any $\theta\in \Theta^{-}$, $\theta(U)\subset U$. To this end, it suffices to show that for any $\sigma\in \Sigma, \delta\in \Delta$, $\delta(U),\sigma^{-1}(U)\subset U$ and $\delta(U)\subset U$. Suppose that $(b_1,\dots,b_m)\in U\setminus\{(0,\dots,0)\}$. Then $\sum_{i=1}^m\theta(a_i)b_i=0$ for all $\theta\in \Theta$. Applying $\delta\in \Delta$ to both sides yields that
$$
0=\sum_{i=1}^m \delta(\theta(a_i)b_i)=\sum_{i=1}^m (\delta\theta(a_i)b_i+\theta(a_i)\delta(b_i)).
$$
 This implies that $\sum_{i=1}^m \theta(a_i)\delta(b_i)=0$ because $\sum_{i=1}^m \delta\theta(a_i)b_i=0$. In other words, $(\delta(b_1),\dots,\delta(b_m))\in U$ and so $\delta(U)\subset U$.
 Since $\sum_{i=1}^m \sigma\theta(a_i)b_i=0$ for any $\sigma\in \Sigma$, applying $\sigma^{-1}$ to both sides yields that $\sum_{i=1}^m \theta(a_i)\sigma^{-1}(b_i)=0$. So $\sigma^{-1}((b_1,\dots,b_m))\in U$ and then $\sigma^{-1}(U)\subset U$. Consequently, $\theta(U)\subseteq U$ for any $\theta\in \Theta^{-}$. Next, we shall show that $U$ contains an element with 1 as some coordinate. Suppose $(b_1,\dots,b_m)\in U\setminus\{(0,\dots,0)\}$. Without loss of generality, we may assume that $b_m\neq 0$. Since $R$ is $\Sigma\Delta$-simple, there are $\theta_1,\dots,\theta_s\in \Theta$ and $u_1,\dots,u_s\in R$ such that $\sum_{i=1}^s u_i\theta_i(b_m)=1$. Let $\tau$ be an element in the semigroup generated by $\{\sigma^{-1} \mid \forall\,\,\sigma\in \Sigma\}$ such that $\tau\theta_i\in \Theta^{-}$ for all $i=1,\dots,s$. Set $\tilde{b}_j=\sum_{i=1}^s \tau(u_i)\tau\theta_i(b_j)$. Then $\tilde{b}_m=1$ and $\tilde{\bfb}=(\tilde{b}_1,\dots,\tilde{b}_m)\in U$ because $U$ is a $R$-module and $\theta(U)\subseteq U$ for any $\theta\in\Theta^{-}$. Thus $\tilde{\bfb}$ satisfies the rquired property. Finally, consider the set
 $$
    S=\{\sigma^{-1}(\tilde{\bfb})-\tilde{\bfb} \mid \forall\,\,\sigma\in \Sigma\}\cup \{\delta(\tilde{\bfb}) \mid \forall\,\,\delta\in \Delta\}.
 $$
If $S=\{(0,\dots,0)\}$ then $\tilde{\bfb}\in (R^{\Sigma\Delta})^m$. So $a_1,\dots,a_m$ are linearly dependent over $R^{\Sigma\Delta}$ as $\sum_{i=1}^m a_i\tilde{b}_i=0$. Assume that $S\neq \{(0,\dots,0)\}$ and $(c_1,\dots,c_m)\in S\setminus\{(0,\dots,0)\}$. Since $S\subset U$, $(c_1,\dots,c_m)$ is a nontrivial zero of $W$. Furthermore, as $\tilde{b}_m=1$, one sees that $c_m=0$. Therefore $(c_1,\dots,c_{m-1})$ is a nontrivial zero of $W_{m-1}=\{\sum_{i=1}^{m-1} \theta(a_i)X_i \mid \forall\,\, \theta\in \Theta\}$. Since $W_{m-1}$ is closed under the action of $\Theta$, by Lemma~\ref{lm:solutions}, $\det((\theta_i(a_j))_{1\leq i,j\leq m-1})=0$ for any $\theta_1,\dots,\theta_{m-1}\in \Theta$. Using induction hypothesis, $a_1,\dots,a_{m-1}$ are linearly dependent over $R^{\Sigma\Delta}$ and so are $a_1,\dots,a_m$.
\end{proof}

The example below shows that the above proposition is not true if $R$ is not $\Sigma\Delta$-simple.
\begin{exam}
\label{exam:counterexample}
Let $R=\Q[y,z]$ where $y,z$ are two indeterminates. Define $\sigma: R\rightarrow R$ as follows: $\sigma(c)=c\,\,\forall\,c\in \Q, \sigma(y)=2y, \sigma(z)=2z$. Then $R$ is a $\sigma$-ring but not $\sigma$-simple. One can verify that $R^\sigma=\Q$ and for any $i,j\geq 0$, $\sigma^i(y)\sigma^j(z)-\sigma^i(z)\sigma^j(y)=0$. However, $y,z$ are linearly indepdent over $\Q$.
\end{exam}
\begin{cor}
\label{cor:linearindependence}
Suppose that $\Sigma\subseteq \{\sigma\}$, $R$ is a simple $\Sigma\Delta$-ring and $a_1,\dots,a_m\in R$. If $a_1,\dots, a_m$ are linearly independent over $R^{\Sigma\Delta}$ then there are $\theta_1,\dots, \theta_m$ with $\theta_1=1$ such that $\det((\theta_i(a_j))_{1\leq i,j\leq m})\neq 0$.
\end{cor}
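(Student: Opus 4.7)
My plan is to induct on $m$, combining Proposition~\ref{prop:lineardependence} with Lemma~\ref{lm:solutions} to pin the first operator to the identity. For $m=1$ the claim is trivial: $a_1 \neq 0$, so $\theta_1=1$ works. For $m\geq 2$, I would first apply the inductive hypothesis to the linearly independent tuple $a_1,\dots,a_{m-1}$ to obtain $\theta_2,\dots,\theta_{m-1}\in\Theta$ (with $\theta_1=1$) for which the $(m-1)\times(m-1)$ determinant $d:=\det((\theta_i(a_j))_{1\leq i,j\leq m-1})$ is nonzero.

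Next I would let $\theta_m\in\Theta$ vary and expand $\det M(\theta_m)$ along the last row of the augmented $m\times m$ matrix $M(\theta_m)=(\theta_i(a_j))_{1\leq i,j\leq m}$, obtaining $\det M(\theta_m)=\sum_{j=1}^m c_j\,\theta_m(a_j)$ with cofactors $c_j=(-1)^{m+j}M_{m,j}$ depending only on the fixed data and with $c_m=d\neq 0$. A purely formal substitution $\theta_m(a_j)=a_j$ in this identity makes the first and last rows of $M$ coincide, yielding the identity $\sum_{j=1}^m c_j a_j=0$ in $R$ as an algebraic consequence of the cofactor formula (no actual element of $\Theta$ needs to realize this substitution).

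The key step is then a proof by contradiction. If $\det M(\theta_m)=0$ for every $\theta_m\in\Theta$, the tuple $(c_1,\dots,c_m)$, nonzero because $c_m\neq 0$, is a common zero of
$$W:=\{\textstyle\sum_j a_jX_j\}\cup\{\textstyle\sum_j\theta(a_j)X_j:\theta\in\Theta\},$$
which is closed under the action of $\Theta$. Lemma~\ref{lm:solutions} then forces $\det((\theta'_i(a_j)))=0$ for every $\theta'_1,\dots,\theta'_m\in\Theta$, and Proposition~\ref{prop:lineardependence} makes $a_1,\dots,a_m$ linearly dependent over $R^{\Sigma\Delta}$, contradicting the hypothesis. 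Hence some $\theta_m\in\Theta$ gives $\det M(\theta_m)\neq 0$ and the induction closes.

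The main subtlety I foresee is the careful handling of the ``$\theta_m=1$'' substitution: since $\Theta$ is presented as a semigroup which need not contain the identity, one must read $\sum_j c_j a_j=0$ as a formal identity in $R$ coming from the cofactor-expansion formula rather than as an evaluation at some element of $\Theta$. The hypothesis $\Sigma\subseteq\{\sigma\}$ will enter, if at all, only indirectly; both Lemma~\ref{lm:solutions} and Proposition~\ref{prop:lineardependence} are already established under more general hypotheses, so the restriction presumably reflects the intended downstream application rather than the internal logic of this reduction.
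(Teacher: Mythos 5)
Your proof is correct, but it takes a genuinely different route from the paper's. The paper argues contrapositively in one shot: assuming $d(1,\theta_2,\dots,\theta_m)=0$ for all $\theta_2,\dots,\theta_m$, it uses the Leibniz rule $\delta(d(\theta_1,\dots,\theta_m))=\sum_i d(\theta_1,\dots,\delta\theta_i,\dots,\theta_m)$ to force vanishing whenever some $\theta_i$ lies in the semigroup $\Theta_\Delta$ generated by $\Delta$, and then writes an arbitrary tuple as $\theta_i=\sigma^j\bar{\theta}_i$ with a \emph{common} power $j$ chosen so that some $\bar{\theta}_\ell\in\Theta_\Delta$; applying $\sigma^j$ to the resulting zero determinant kills $d(\theta_1,\dots,\theta_m)$ for all tuples, and Proposition~\ref{prop:lineardependence} finishes. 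That normalization step is exactly where $\Sigma\subseteq\{\sigma\}$ is used, so your closing guess is only half right: the restriction is not merely cosmetic in the paper's argument, but your proof shows it is not needed for the statement. Your induction on $m$, with the cofactor expansion $\det M(\theta_m)=\sum_j c_j\theta_m(a_j)$, $c_m=d\neq 0$, the formal identity $\sum_j c_j a_j=0$ (two equal rows), and the observation that $(c_1,\dots,c_m)$ is then a nontrivial zero of a $\Theta$-closed family so that Lemma~\ref{lm:solutions} and Proposition~\ref{prop:lineardependence} yield the contradiction, is airtight and uses only the general-$\Sigma\Delta$ versions of those two results. What your approach buys is generality (arbitrary $\Sigma$ and $\Delta$) and independence from the commutation bookkeeping between $\sigma$ and $\Delta$; what the paper's buys is a direct, non-inductive argument that exhibits concretely how derivations and the single shift propagate the vanishing of the determinants. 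Your careful handling of whether $1\in\Theta$ (adjoining $\sum_j a_jX_j$ to $W$ and reading $\sum_j c_ja_j=0$ as an algebraic identity rather than an evaluation) is also the right way to patch a point the paper glosses over.
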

\begin{proof}
For any $\theta_1,\dots,\theta_m\in \Theta$, set
$$
d(\theta_1,\dots,\theta_m)=\det((\theta_i(a_j))_{1\leq i,j\leq m}).
$$
Suppose that $d(1,\theta_2,\dots,\theta_m)=0$ for any $\theta_2,\dots, \theta_m\in \Theta$. We need to show that $a_1,\dots,a_m$ are linearly dependent over $R^{\Sigma\Delta}$. Due to Proposition~\ref{prop:lineardependence}, we only need to show that $d(\theta_1,\dots,\theta_m)=0$ for any $\theta_1,\dots, \theta_m\in \Theta$. To this end, note that for any $\delta\in \Delta$,
$$
   \delta(d(\theta_1,\theta_2,\dots,\theta_m))=\sum_{i=1}^m d(\theta_1, \theta_2,\dots,\delta\theta_i,\dots,\theta_m).
$$
The above equality with $\theta_1=1$ implies that $d(\delta,\theta_2,\dots,\theta_m)=0$ for any $\theta_2,\dots,\theta_m\in \Theta$ and any $\delta\in \Delta$. Using the above equality repeatedly, we have that $d(\theta_1,\theta_2,\dots,\theta_m)=0$ for any $\theta_2,\dots,\theta_m\in \Theta$ and any $\theta_1 \in \Theta_\Delta$, the semigroup generated by $\Delta$. If $\Sigma=\emptyset$ then we are done. Suppose $\Sigma=\{\sigma\}$ and $\theta_1,\dots,\theta_m\in \Theta$. Then there is $j\geq 0$ such that $\theta_i=\sigma^j\bar{\theta_i}$ where $\bar{\theta_i}\in \Theta$ and at least one of them is in $\Theta_\Delta$, say $\bar{\theta}_\ell$. The previous discussion implies that $d(\bar{\theta}_1,\dots,\bar{\theta}_m)=0$ as $\bar{\theta}_\ell\in \Theta_\Delta$. So $d(\theta_1,\dots,\theta_m)=\sigma^j(d(\bar{\theta}_1,\dots,\bar{\theta}_m))=0$.
\end{proof}

\begin{prop}
\label{prop:linearindependence2}
Suppose that $\Sigma\subseteq \{\sigma\}, \Delta\subseteq\{\delta\}$ and $k$ is a $\Sigma\Delta$-field with algebraically closed $k^{\Sigma\Delta}$. Assume that $R$ is a $\Sigma\Delta$-Picard-Vessiot ring over $k$ for some $\Sigma\Delta$-linear system. Let $V\subset R$ be a $\SDgal(R/k)$-invariant $k^{\Sigma\Delta}$-vector space of finite dimension and
$\{a_1,\dots,a_m\}$ a basis of $V$. Then there are $\theta_1,\dots,\theta_m\in \Theta$ with $\theta_1=1$ such that $\det((\theta_i(a_j))_{1\leq i,j\leq m})$ is invertible in $R$.
\end{prop}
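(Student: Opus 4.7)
The plan is in two stages: first invoke Corollary~\ref{cor:linearindependence} to produce a nonzero Wronskian-type determinant of the required form, then upgrade ``nonzero'' to ``invertible'' via a Galois-torsor argument on the minimal primes of $R$.

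\emph{Stage 1 (existence of a nonzero $d$).} Since $k^{\Sigma\Delta}$ is algebraically closed, the paragraph preceding Lemma~\ref{lm:linearlydisjoint} gives $R^{\Sigma\Delta} = k^{\Sigma\Delta}$, so the $k^{\Sigma\Delta}$-basis $a_1,\dots,a_m$ of $V$ is $R^{\Sigma\Delta}$-linearly independent. Because $\Sigma\subseteq\{\sigma\}$, Corollary~\ref{cor:linearindependence} produces $\theta_1=1,\theta_2,\dots,\theta_m\in\Theta$ with
$$
d \;:=\; \det\bigl((\theta_i(a_j))_{1\le i,j\le m}\bigr) \;\neq\; 0.
$$
I will show this same $d$ is already a unit in $R$.

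\emph{Stage 2 (semi-invariance of $d$).} Fix $\tau\in G:=\SDgal(R/k)$. The invariance $\tau(V)\subseteq V$ together with the fact that $a_1,\dots,a_m$ form a $k^{\Sigma\Delta}$-basis produces a matrix $C(\tau)=(c_{jk}(\tau))\in\GL_m(k^{\Sigma\Delta})$ with $\tau(a_j)=\sum_k c_{jk}(\tau)\,a_k$. Since $\tau$ commutes with every $\theta\in\Theta$, applying $\tau$ entrywise to $M:=(\theta_i(a_j))$ gives $\tau(M)=M\cdot C(\tau)^t$, whence $\tau(d)=\det(C(\tau))\cdot d$ with $\det(C(\tau))\in(k^{\Sigma\Delta})^{\times}$.

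\emph{Stage 3 (transitivity and contradiction).} As a finitely generated $k$-algebra that is $\Sigma\Delta$-simple in characteristic $0$, $R$ is Noetherian and reduced (the nilradical is a $\Sigma\Delta$-ideal, hence zero). Let $\frakp_1,\dots,\frakp_t$ be its minimal primes, with orthogonal primitive idempotents $e_1,\dots,e_t$ summing to $1$. The torsor isomorphism $R\otimes_k R\cong R\otimes_{k^{\Sigma\Delta}} S$ from the proof of Proposition~\ref{prop:galoisgroups} implies that every $G$-invariant radical ideal of $R$ is $(0)$ or $R$; applied to the ideal $eR$ generated by the sum $e$ of any $G$-orbit on $\{e_1,\dots,e_t\}$, this forces $e=1$, so $G$ permutes the $\frakp_i$ transitively. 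If $d$ were not a unit it would lie in some $\frakp_j$; then for each $\tau\in G$ the equation $\tau^{-1}(d)=\det(C(\tau^{-1}))\,d\in\tau^{-1}(\frakp_j)$ together with invertibility of the scalar forces $d\in\tau^{-1}(\frakp_j)$, and transitivity places $d$ in every minimal prime. Reducedness then yields $d=0$, contradicting Stage~1.

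The main obstacle is the descent step in Stage~3, ``$G$-invariant radical ideals of $R$ are trivial''; this is not explicitly recorded in the excerpt but is the standard consequence of the torsor structure noted after Proposition~\ref{prop:galoisgroups} (equivalently, $R^G=k$). Everything else — the transformation rule $\tau(d)=\det(C(\tau))d$, reducedness of $R$, and the passage from ``lies in one minimal prime'' to ``lies in all'' — is formal.
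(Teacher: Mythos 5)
Your Stages 1 and 2 match the paper: Corollary~\ref{cor:linearindependence} gives a nonzero $d=\det((\theta_i(a_j)))$ with $\theta_1=1$, and the $G$-invariance of $V$ gives the semi-invariance $\tau(d)=c_\tau d$ with $c_\tau\in (k^{\Sigma\Delta})^{\times}$. The problem is in Stage 3, at the sentence ``If $d$ were not a unit it would lie in some $\frakp_j$.'' A non-unit of a ring lies in some \emph{maximal} ideal, not in a minimal prime; in a reduced Noetherian ring, avoiding every minimal prime is equivalent to being a non-zero-divisor, which is strictly weaker than being invertible. Concretely, in the Picard--Vessiot ring $R=C(x)[e^x,e^{-x}]$ for $y'=y$ (a domain, so with a single minimal prime $(0)$), the element $e^x-1$ lies in no minimal prime yet is not a unit. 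So your argument only re-proves that $d$ is a non-zero-divisor, and the conclusion of the proposition does not follow.

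The missing idea is how the paper converts semi-invariance into invertibility. Using the decomposition $R=Re_0\oplus\dots\oplus Re_{s-1}$ into $\Sigma^s\Delta$-simple integral domains (Lemma 6.8 of \cite{hardouin2008differential}), one restricts to the factor $Re_0$, where $de_0\neq 0$. The semi-invariance of $de_0$ under $\Sigma^s\Delta$-$\Gal(e_0R/k)$ shows that $\sigma^s(de_0)/de_0$ and $\delta(de_0)/de_0$ are Galois-invariant elements of the quotient field of $Re_0$, hence lie in $k$ by the Galois correspondence. This means the principal ideal $(de_0)$ of $Re_0$ is a nonzero $\Sigma^s\Delta$-ideal, and $\Sigma^s\Delta$-simplicity of $Re_0$ forces $de_0$ to be invertible there; transporting by the automorphism $h$ that permutes the idempotents and summing gives invertibility of $d$ in $R$. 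This use of the Galois correspondence plus simplicity of the domain factors is exactly what bridges ``non-zero-divisor'' to ``unit,'' and it is absent from your proposal.
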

\begin{proof}
 By Corollary~\ref{cor:linearindependence}, there are $\theta_1,\dots,\theta_m\in \Theta$ with $\theta_1=1$ such that $d=\det((\theta_i(a_j))_{1\leq i,j\leq m})\neq 0$. For each $g\in \SDgal(R/k)$, $g(d)=c_g d$ where $c_g\in k^{\Sigma\Delta}$. By Lemma 6.8 of \cite{hardouin2008differential} and its proof with $\Sigma\subseteq \{\sigma\}, \Delta\subseteq\{\delta\}$ and $\Pi=\emptyset$, there exist idempotents $e_0,\dots,e_{s-1}$ such that
\begin{enumerate}
   \item $R=Re_0\oplus \dots \oplus Re_{s-1}$,
   \item each $e_iR$ is an integral domain and $\sigma^s\delta$-simple,
   \item there is $h\in \SDgal(R/k)$ such that $h$ permutes $e_0,\dots,e_{s-1}$.
\end{enumerate}
Due to Lemma 6 of \cite{feng2010liouvillian}, $\Sigma^s\Delta$-$\Gal(e_0R/k)$ is an algebraic subgroup of $\SDgal(R/k)$. Since $d\neq 0$, $de_0\neq 0$. Otherwise, applying $h$ to $de_0$ yields that $de_i=0$ for all $i$ and thus $d=0$, a contradiction. Now for each $g\in \Sigma^s\Delta$-$\Gal(e_0R/k)$, $g(de_0)=c_g de_0$. As $Re_0$ is an integral domain, both $\sigma^s(de_0)/de_0$ and $\delta(de_0)/de_0$ are in the field $\K_0$ of quotients of $Re_0$. Since for each $g\in \Sigma^s\Delta$-$\Gal(e_0R/k)=\Sigma^s\Delta$-$\Gal(\K_0/k)$,
$$
  g(\tilde{\sigma}(de_0)/de_0)=\sigma^s(de_0)/de_0,\quad g(\delta(de_0)/de_0)=\delta(de_0)/de_0
$$
for any $\tilde{\sigma}\in \Sigma^s$ and any $\delta\in \Delta$. The Galois correspondence implies that both $\tilde{\sigma}(de_0)/de_0$ and $\delta(de_0)/de_0$ are in $k$ for any $\tilde{\sigma}\in \Sigma^s$ and any $\delta\in \Delta$. This implies the ideal $(de_0)$ of $Re_0$ generated by $de_0$ is a nontrivial $\Sigma^s\Delta$-ideal. Since $Re_0$ is $\Sigma^s\Delta$-simple, $de_0$ is invertible in $Re_0$, i.e. there is $u_0e_0\in Re_0$ such that $u_0de_0=e_0$. Applying $h$ to $u_0de_0=e_0$ repeatedly implies that there are $u_ie_i\in Re_i, i=0,\dots,s-1$ such that $u_ide_i=e_i$ for all $i$. Set $u=\sum_{i=0}^{s-1}u_ie_i\in R$. Then
$$
   ud=\left(\sum_{i=0}^{s-1}u_ie_i\right)\left(\sum_{i=0}^{s-1} de_i\right)=\sum_{i=0}^{s-1}u_ide_i=\sum_{i=0}^{s-1}e_i=1,
$$
i.e. $d$ is invertible in $R$.
\end{proof}

\begin{cor}
\label{cor:groupinvariantideal}
Suppose that $\Sigma\subseteq \{\sigma\}, \Delta\subseteq \{\delta\}$ and $k$ is a $\Sigma\Delta$-field with algebraically closed $k^{\Sigma\Delta}$. Assume that $R$ is a $\Sigma\Delta$-Picard-Vessiot ring over $k$ for some $\Sigma\Delta$-linear system. Then $R$ has no nontrivial $\SDgal(R/k)$-invariant ideal.
\end{cor}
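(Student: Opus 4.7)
The plan is to assume toward contradiction that $I$ is a nontrivial $\SDgal(R/k)$-invariant ideal of $R$, pick a nonzero $a \in I$, and exhibit an invertible element of $R$ that lies in $I$; this forces $I = R$ and contradicts nontriviality. Write $G = \SDgal(R/k)$ and $C = k^{\Sigma\Delta}$ throughout. Recall that the assumption on $k^{\Sigma\Delta}$ combined with Proposition 6.14 of \cite{hardouin2008differential} gives $R^{\Sigma\Delta} = C$, so Proposition~\ref{prop:linearindependence2} is available.

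The first step is to produce a finite-dimensional, $G$-invariant $C$-subspace $V \subseteq R$ containing $a$ with $V \subseteq I$. From the proof of Proposition~\ref{prop:galoisgroups}, the multiplication map $R \otimes_C S \xrightarrow{\sim} R \otimes_k R$ with $S = (R \otimes_k R)^{\Sigma\Delta}$ is a $\Sigma\Delta$-isomorphism, so the element $1 \otimes a$ can be written uniquely as $\sum_{i=1}^N r_i \otimes s_i$ with $r_i \in R$, $s_i \in S$. For $\tau \in G$, apply the $\Sigma\Delta$-homomorphism $\mu_\tau \colon R \otimes_k R \to R$, $b \otimes c \mapsto b\tau(c)$, to both sides; its restriction to $S$ is a $C$-algebra map into $R^{\Sigma\Delta} = C$, and since $\mu_\tau(\calZ) = \calX^{-1}\tau(\calX) = \phi_\tau(\calZ)$, it coincides with $\phi_\tau$ on the generator of $S$, hence on all of $S$. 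This yields $\tau(a) = \sum_{i=1}^N \phi_\tau(s_i)\, r_i$, so the $C$-span $V$ of the orbit $\{\tau(a) : \tau \in G\}$ sits inside $\sum_i C\, r_i$, is finite-dimensional, and is manifestly $G$-invariant. Since $I$ is a $G$-invariant ideal containing $a$ and $C \subseteq R$, one has $V \subseteq I$.

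The second step is to feed $V$ into Proposition~\ref{prop:linearindependence2}: choose a $C$-basis $\{a_1, \dots, a_m\}$ of $V$ and obtain $\theta_1, \dots, \theta_m \in \Theta$ with $\theta_1 = 1$ such that $d := \det\bigl((\theta_i(a_j))_{1 \le i, j \le m}\bigr)$ is invertible in $R$. Expanding $d$ along its first row gives $d = \sum_{j=1}^m a_j C_j$, where each cofactor $C_j \in R$. Since $a_j \in V \subseteq I$ and $I$ is an ideal, this forces $d \in I$, and invertibility of $d$ then yields $1 \in I$, contradicting $I \neq R$.

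The technical heart of the argument is the first step: one must verify that the $G$-orbit of any element of $R$ spans a finite-dimensional $C$-subspace, which relies on a careful use of the torsor/comodule structure established in Proposition~\ref{prop:galoisgroups}. Once this is in place, the conclusion drops out of Proposition~\ref{prop:linearindependence2} via a one-line cofactor expansion.
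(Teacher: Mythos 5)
Your proof is correct and follows essentially the same route as the paper: take a basis of the $C$-span of the $G$-orbit of a nonzero $a\in I$, apply Proposition~\ref{prop:linearindependence2} to get an invertible determinant $d$ with $\theta_1=1$, and expand along the first row to conclude $d\in I$. The only difference is that you explicitly justify (via the comodule structure from Proposition~\ref{prop:galoisgroups}) that the orbit spans a finite-dimensional space, a point the paper leaves implicit; this is a correct and welcome addition rather than a different argument.
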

\begin{proof}
Suppose that $I$ is a $\SDgal(R/k)$-invariant ideal and $I\neq (0)$. We shall show that $I=R$. Let $a\in I\setminus\{0\}$ and let $\{a_1,\dots,a_m\}$ be a basis of the $k^{\Sigma\Delta}$-vector space spanned by $\{g(a) \mid g\in \SDgal(R/k)\}$. By Proposition~\ref{prop:linearindependence2}, there are $\theta_1,\dots,\theta_m\in \Theta$ with $\theta_1=1$ such that $d=\det((\theta_i(a_j))_{1\leq i,j\leq m})$ is invertible in $R$. Expanding $d$ by the first row, one sees that $d\in I$. This implies that $I=R$.
\end{proof}

\section{Specializations of $\Sigma\Delta$-Picard-Vessoit rings}
\label{sec:specializations}
Thoughout this section, $k$ is a $\Sigma\Delta$-field with algebraically closed field of constants $C=k^{\Sigma\Delta}$, $R$ is a $\Sigma\Delta$-Picard-Vessiot ring for (\ref{eqn:SigmaDeltasystem}) over $k$ and $G=\Hom_C((R\otimes_k R)^{\Sigma\Delta},C)$. As shown in Proposition~\ref{prop:galoisgroups}, we may identify the $\Sigma\Delta$-Galois group $\SDgal(R/k)$ with $G$. We shall fix a fundamental solution matrix $\calX$ of (\ref{eqn:SigmaDeltasystem}) in $\GL_n(R)$, and set $\calZ=\calX^{-1}\otimes_k \calX$. Then $C[G]=(R\otimes_k R)^{\Sigma\Delta}=C[\calZ,1/\det(\calZ)]$.

We shall investigate the specializations of $R$. These specializations play an important role to connect $G$ to the Galois groups of the specializations of the linear difference equation and the differential equation in (\ref{eqn:sigmadelta-eqn}) respectively. To construct the specializations of $R$, we need to introduce a simple-$\Sigma\Delta$ subring of $R$. 
We assume that
\begin{itemize}
  \item $D$ is simple $\Sigma\Delta$-ring such that $k$ is the field of fractions of $D$,
  \item $\R=D[\calX,1/\det(\calX)]$.
  \end{itemize}
Note that $D$ in the above assumptions always exists, for instance, we may simply set $D=k$. Then $D^{\Sigma\Delta}=k^{\Sigma\Delta}=C$ due to \cite{wibmerthesis}. We shall use $\R$ to construct the Picard-Vessiot rings corresponding to the specializations of the equations in (\ref{eqn:sigmadelta-eqn}). Let us start with a lemma that has already appeared in the literature (see for example \cite{hardouin2008differential}, Lemma 1.23 of \cite{van2012galois}, Lemma 1.11 of \cite{van2006galois} and Proposition 1.4.15 of \cite{wibmerthesis}) for special cases.

\begin{lem}
\label{lm:correspondence}
Suppose that $S\subseteq T$ are two $\Sigma\Delta$-rings and $S$ is $\Sigma\Delta$-simple. Assume further that $T$ is generated by $T^{\Sigma\Delta}$ as an $S$-module. Then the map $J\rightarrow (J)$ is a bijective correspondence from the set of ideals of $T^{\Sigma\Delta}$ to the set of $\Sigma\Delta$-ideals of $T$.
\end{lem}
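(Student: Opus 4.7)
The plan is to reduce everything to a statement about tensor products over a field. First, I would combine Lemma~\ref{lm:linearlydisjoint} with the hypothesis that $T$ is generated by $T^{\Sigma\Delta}$ as an $S$-module: the former gives injectivity and the latter surjectivity of the multiplication map $S \otimes_{S^{\Sigma\Delta}} T^{\Sigma\Delta} \to T$. Writing $C = S^{\Sigma\Delta}$, which is a field because $S$ is $\Sigma\Delta$-simple, this identifies $T$ with $S \otimes_C T^{\Sigma\Delta}$ as $\Sigma\Delta$-rings, where $\Sigma$ and $\Delta$ act only on the first factor. In this picture, for any $C$-submodule $M \subseteq T^{\Sigma\Delta}$, a direct computation with a $C$-basis of $M$ shows $(S \otimes_C M)^{\Sigma\Delta} = 1 \otimes M$: any $\Sigma\Delta$-invariant element has expansion coefficients in $S^{\Sigma\Delta} = C$.

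Given this, the injectivity of $J \mapsto (J)$ is immediate: $(J)$ corresponds to $S \otimes_C J$ under the isomorphism, so $(J) \cap T^{\Sigma\Delta} = (S \otimes_C J)^{\Sigma\Delta} = J$, recovering $J$. For surjectivity, fix a $\Sigma\Delta$-ideal $I \subseteq T$ and set $J = I \cap T^{\Sigma\Delta}$; clearly $(J) \subseteq I$. If this inclusion were strict, the image $\bar I$ in the quotient $T/(J) \cong S \otimes_C (T^{\Sigma\Delta}/J)$ would be a nonzero $\Sigma\Delta$-ideal, and I would invoke a key sub-claim: any nonzero $\Sigma\Delta$-ideal in a ring of the form $S \otimes_C N$, where $\Sigma$ and $\Delta$ act trivially on $N$, must contain a nonzero element of $1 \otimes N$. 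Applied to $\bar I$, this would produce $x \in I$ with $x - y \in (J)$ for some $y \in T^{\Sigma\Delta} \setminus J$; but then $y = x - (x - y) \in I \cap T^{\Sigma\Delta} = J$, a contradiction.

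The main obstacle concentrates in the sub-claim above, and I would prove it by the same minimal-representation technique already used in Lemma~\ref{lm:solutions}. Choose $0 \neq t$ in the given ideal $I'$, write it as $\sum_{i=1}^r s_i \otimes e_i$ with the $e_i$ belonging to a $C$-basis of $N$ and $r$ minimal among nonzero elements of $I'$, and consider $L = \{s \in S : s \otimes e_1 + \sum_{i \geq 2} s'_i \otimes e_i \in I' \text{ for some } s'_i \in S\}$. This is a $\Sigma\Delta$-ideal of $S$ that contains $s_1 \neq 0$, so $\Sigma\Delta$-simplicity of $S$ forces $L = S$; pick a witness $t^* \in I'$ with leading coefficient $1$. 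Then for every $\sigma \in \Sigma$ and $\delta \in \Delta$ the elements $\sigma(t^*) - t^*$ and $\delta(t^*)$ lie in $I'$ and have strictly fewer than $r$ terms, so minimality forces them to vanish, placing every coefficient of $t^*$ in $S^{\Sigma\Delta} = C$ and yielding $t^* \in 1 \otimes N$ as required.
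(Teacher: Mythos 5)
Your proof is correct, but it takes a genuinely different route from the paper's. The paper proves only the nontrivial (surjectivity) direction and does so by exploiting the machinery it has just built in Section~\ref{sec:lineardependence}: writing $f=\sum a_ib_i$ with the $a_i\in S$ linearly independent over $S^{\Sigma\Delta}$ and the $b_i$ constants, it invokes Proposition~\ref{prop:lineardependence} to get $\theta_1,\dots,\theta_s\in\Theta$ with $d=\det((\theta_i(a_j)))\neq 0$, solves the resulting linear system to conclude $db_i\in I$, and then uses $\Sigma\Delta$-simplicity of $S$ on the ideal $\{a\in S\mid ab_i\in I\}$ to force $b_i\in I\cap T^{\Sigma\Delta}$. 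You instead first upgrade Lemma~\ref{lm:linearlydisjoint} plus the generation hypothesis to the ring isomorphism $T\cong S\otimes_{S^{\Sigma\Delta}}T^{\Sigma\Delta}$ with the action concentrated on the first factor, and then run a minimal-support argument in the quotient $T/(J)$; this is essentially the classical Picard--Vessiot argument (the one behind Lemma 1.23 of \cite{van2012galois} and Proposition 1.4.15 of \cite{wibmerthesis}, which the paper cites as precedents), so it is self-contained modulo Lemma~\ref{lm:linearlydisjoint} and does not need the generalized Wronskian criterion at all. Your version also has the merit of explicitly verifying the injectivity of $J\mapsto(J)$ via $(J)\cap T^{\Sigma\Delta}=(S\otimes_C J)^{\Sigma\Delta}=J$, a point the paper's proof leaves implicit. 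What the paper's route buys is brevity once Proposition~\ref{prop:lineardependence} is available; what yours buys is independence from that proposition and a cleaner structural picture of $T$. Both are sound.
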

\begin{proof}
Suppose that $I$ is a $\Sigma\Delta$-ideal of $T$. It suffices to show that $I$ is generated by $I\cap T^{\Sigma\Delta}$. Any $f\in I$ can be written as $f=\sum_{i=1}^s a_ib_i$ where $a_i\in S, b_i\in T^{\Sigma\Delta}$ and $a_1,\dots,a_s$ are linearly independent over $S^{\Sigma\Delta}$. By Proposition~\ref{prop:lineardependence}, there are $\theta_1,\dots,\theta_s\in \Theta$ such that $d=\det((\theta_i(a_j)))\neq 0$. We have that $(\theta_1(f),\dots,\theta_s(f))^t=(\theta_i(a_j))(b_1,\dots,b_s)^t$. Multiplying the adjoint matrix of $(\theta_i(a_j))$ on both sides of the previous linear equations yields that $db_i\in I$ for any $i=1,\dots,s$. Hence $\Gamma_i=\{a\in S\mid ab_i\in I\}$ is a nonzero ideal. As $b_i$ is constant, $\Gamma_i$ is a nonzero $\Sigma\Delta$-ideal. Since $S$ is $\Sigma\Delta$-simple, $1\in \Gamma_i$, \ie $b_i\in I\cap T^{\Sigma\Delta}$. Consequently, $f$ belongs to the ideal generated by $I\cap T^{\Sigma\Delta}$.
\end{proof}

\begin{lem}
\label{lm:ringinjectivity}
The natural homomorphism $\fraki: \R\otimes_D \R\rightarrow R\otimes_k R$ given by $a\otimes_D b\mapsto a\otimes_k b$ is injective.
\end{lem}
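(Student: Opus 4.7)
The plan is to reinterpret $\fraki$ as a localization map, so that its injectivity becomes a torsion statement. Since $k=\mathrm{Frac}(D)$ and, by clearing denominators using that $\R$ is $D$-torsion-free, the defining ideal of $R$ in $k[Y,1/\det(Y)]$ is generated by the defining ideal of $\R$ in $D[Y,1/\det(Y)]$, one has $R=k\otimes_D\R$, and hence
\[
R\otimes_k R \;\cong\; k\otimes_D(\R\otimes_D\R) \;=\; (D\setminus\{0\})^{-1}(\R\otimes_D\R).
\]
Under this identification, $\fraki$ is the natural localization map, so $\fraki$ is injective if and only if no nonzero $d\in D$ is a zero-divisor on $\R\otimes_D\R$.

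I would next show that $\R$ itself is $\Sigma\Delta$-simple: for any nonzero $\Sigma\Delta$-ideal $I\subseteq\R$, the extension $IR$ is a nonzero $\Sigma\Delta$-ideal of the $\Sigma\Delta$-simple ring $R$ (nonzero because $\R\hookrightarrow R$), so $IR=R$; clearing denominators in an expression $1\in IR$ produces a nonzero element of $I\cap D$, and $\Sigma\Delta$-simplicity of $D$ forces $I\cap D=D$, whence $I=\R$. Since $\R^{\Sigma\Delta}=C$, Lemma~\ref{lm:linearlydisjoint} applied to the embedding $\R\hookrightarrow R\otimes_k R$ shows that $\R$ and $S=(R\otimes_k R)^{\Sigma\Delta}=C[G]$ are linearly disjoint over $C$, so multiplication gives an injection $\iota:\R\otimes_C S\hookrightarrow R\otimes_k R$. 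A direct computation in $R\otimes_k R$ shows $\calX\cdot\calZ=1\otimes\calX$, so the matrix $\calX\calZ$ satisfies exactly the same $\Sigma\Delta$-equations and polynomial relations as $\calX$. Viewing $\calX\calZ$ via $\iota$ as an element of $\GL_n(\R\otimes_C S)$, I can then define a $\Sigma\Delta$-$D$-algebra homomorphism
\[
\Phi:\R\otimes_D\R\;\longrightarrow\;\R\otimes_C S,\qquad a\otimes b\;\longmapsto\;(a\otimes 1)\,b(\calX\calZ),
\]
for which $\fraki=\iota\circ\Phi$. Injectivity of $\fraki$ is now equivalent to injectivity of $\Phi$, and surjectivity of $\Phi$ is immediate: its image contains $\R\otimes 1$ and $\calX\calZ$, and left-multiplication by $(\calX\otimes 1)^{-1}$ extracts $\calZ$ and then $1/\det(\calZ)$, recovering all of $1\otimes S$.

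The main obstacle is proving injectivity of $\Phi$---equivalently, that $\R\otimes_D\R$ carries no $D$-torsion, while the target $\R\otimes_C S$ is automatically $D$-torsion-free (it is $\R$-free on a $C$-basis of $S$). The route I would attempt is to construct a two-sided inverse $\Psi:\R\otimes_C S\to\R\otimes_D\R$ by $a\otimes s\mapsto(a\otimes 1)\,s(\calZ_\R)$, where $\calZ_\R=(\calX^{-1}\otimes 1)(1\otimes\calX)\in M_n(\R\otimes_D\R)$. Well-definedness of $\Psi$ requires that every element of the defining ideal of $G\subseteq\GL_n$ vanish on $\calZ_\R$ inside $\R\otimes_D\R$, not merely after applying $\fraki$, and this is itself equivalent to the desired injectivity. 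Breaking this apparent circularity is the heart of the proof: I would expect the actual argument to proceed by showing directly that $\R$ is a flat $D$-module (using the $\Sigma\Delta$-simplicity of $\R$, together with Proposition~\ref{prop:lineardependence} to control $D$-linear relations among elements of $\R$), whence $\R\otimes_D\R$ is itself $D$-flat and in particular $D$-torsion-free.
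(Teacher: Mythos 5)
Your reduction is correct as far as it goes: since $k=\mathrm{Frac}(D)$ one does have $R\cong k\otimes_D\R$ and $R\otimes_k R\cong (D\setminus\{0\})^{-1}(\R\otimes_D\R)$, so $\ker(\fraki)$ is exactly the $D$-torsion of $\R\otimes_D\R$; your proof that $\R$ is $\Sigma\Delta$-simple is the same as the paper's, and the factorization $\fraki=\iota\circ\Phi$ through the linear-disjointness injection $\iota:\R\otimes_C C[G]\hookrightarrow R\otimes_k R$ is legitimate. But the proof stops at the decisive point. You concede that injectivity of $\Phi$ (equivalently, $D$-torsion-freeness of $\R\otimes_D\R$) is ``the heart of the proof'' and then only conjecture that it should follow from $\R$ being a flat $D$-module, without proving flatness. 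That is a genuine gap: flatness of $\R$ over $D$ is strictly stronger than what is needed, is not established by anything you wrote, and is not how the argument is meant to close. (Your alternative route via a two-sided inverse $\Psi$ is, as you yourself observe, circular.)

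The missing ingredient is Lemma~\ref{lm:correspondence}. Because $\R$ is $\Sigma\Delta$-simple and $\R\otimes_D\R$ is generated as an $\R$-module by its constants $C[\calZ,1/\det(\calZ)]$, every $\Sigma\Delta$-ideal of $\R\otimes_D\R$ --- in particular $\ker(\fraki)$ --- is generated by its intersection with $C[\calZ,1/\det(\calZ)]$. So one only has to kill a \emph{constant} $a\in\ker(\fraki)\cap C[\calZ,1/\det(\calZ)]$. Writing $a=\sum a_i\otimes_D b_i$ and clearing denominators so that $b_1,\dots,b_s$ become a $k$-basis of the span of the $b_i$, the relation $\fraki(a)=0$ forces $da=0$ for some nonzero $d\in D$; then, precisely because $a$ is a $\Sigma\Delta$-constant, its annihilator $\{b\in\R\mid ba=0\}$ is a nonzero $\Sigma\Delta$-ideal of the simple ring $\R$, hence contains $1$, so $a=0$. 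In other words, the paper does not prove that \emph{all} of $\R\otimes_D\R$ is torsion-free by a module-theoretic (flatness) argument; it proves that the torsion constants vanish and lets the correspondence lemma propagate this to the whole kernel. If you insert this step in place of the flatness conjecture, your write-up becomes a complete proof essentially equivalent to the paper's.
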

\begin{proof}
We first claim that $\R$ is $\Sigma\Delta$-simple.
Suppose that $I$ is a nonzero $\Sigma\Delta$-ideal of $\R$. It suffices to show that $1\in I$. Let $a\in I\setminus\{0\}$. Since $R$ is $\Sigma\Delta$-simple, there are $b_1,\dots,b_s \in R$ and $\theta_1,\dots,\theta_s\in \Theta$ such that $\sum_{i=1}^s b_i\theta_i(a)=1$. Let $p\in D$ be nonzero such that $pb_i\in \R$ for all $i$. Then $p=\sum_{i=1}^s pb_i\theta_i(a)\in I\cap D$. Hence $I\cap D$ is a nonzero $\Sigma\Delta$-ideal of $D$ and so $1\in I\cap D$ because $D$ is $\Sigma\Delta$-simple. Consequently, $1\in I$. This proves our claim.

Since $\R$ is $\Sigma\Delta$-simple, the map $\R\rightarrow \R\otimes_D \R, a\mapsto a\otimes 1$ is injective and thus $\R$ can be viewed as a $\Sigma\Delta$-subring of $\R\otimes_D \R$. We have that $(\R\otimes_D \R)^{\Sigma\Delta}=C[\calZ, 1/\det(\calZ)]$ and $\R\otimes_D \R$ is generated by $C[\calZ, 1/\det(\calZ)]$ as a $\R$-module. By Lemma~\ref{lm:correspondence}, in order to show that $\ker(\fraki)=\{0\}$, it suffices to show that $\ker(\fraki)\cap C[\calZ,1/\det(\calZ)]=\{0\}$. Suppose that $a\in \ker(\fraki) \cap C[\calZ,1/\det(\calZ)]$. Write $a=\sum_{i=1}^m a_i\otimes_D b_i$. Without loss of generality, we may assume that $\{b_1,\dots, b_s\}$ is a $k$-basis of the vector space spanned by $b_1,\dots,b_m$. Let $d\in D$ be nonzero such that $d b_{s+j}=\sum_{i=1}^s c_{ij}b_i$ for some $c_{ij}\in D$ where $j=1,\dots,m-s$. Then we have that
$da=\sum_{i=1}^s (da_i+\sum_{j=1}^{m-s} c_{ij}a_j)\otimes_{D} b_i$. We still have that $\fraki(da)=0$, i.e. $\sum_{i=1}^s (da_i+\sum_{j=1}^{m-s} c_{ij}a_j)\otimes_k b_i=0$. This implies that $da_i+\sum_{j=1}^{m-s} c_{ij}a_j=0$ for all $i=1,\dots,s$, because $b_1,\dots,b_s$ are linearly independent over $k$. Hence $da=0$. Since $a\in C[\calZ,1/\det(\calZ)]\subseteq (\R\otimes_D \R)^{\Sigma\Delta}$, the set $J=\{b\in \R \mid ba=0\}$ is a nonzero $\Sigma\Delta$-ideal. As $\R$ is $\Sigma\Delta$-simple, $1\in J$. In other words, $a=0$ and thus $\ker(\fraki)\cap C[\calZ, 1/\det(\calZ)]=\{0\}$.
\end{proof}
Suppose that $F$ is a field extension of $C$ and $c\in \Hom_C(D,F)$. Then $F$ can be viewed as a $D$-algebra and one can consider $F\otimes_D \R$, where the tensor product is formed using $c$.
\begin{prop}
\label{prop:extensiontorsors}
 Suppose that $T=F\otimes_D \R$ is not the zero ring. Then
 \begin{align*}
    \varphi_T: T\otimes_F T &\longrightarrow T\otimes_C C[G]\\
     a\otimes_F b &\longmapsto (a\otimes_C 1)b(\tilde{\calX} \otimes_C \calZ)
\end{align*} is $T$-isomorphic, where $\tilde{\calX}=1\otimes_D \calX$.
\end{prop}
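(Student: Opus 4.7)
The plan is to first prove the unspecialized analog of the proposition and then obtain $\varphi_T$ by base-changing along $c\colon D\to F$. Concretely, I would begin by showing that the natural $\R$-algebra homomorphism
\[
  \psi_0\colon \R \otimes_C C[G] \longrightarrow \R \otimes_D \R, \qquad a \otimes b \mapsto (a \otimes 1)\, b,
\]
is an isomorphism. The proof of Lemma~\ref{lm:ringinjectivity} already records that $\R$ is $\Sigma\Delta$-simple and that $(\R \otimes_D \R)^{\Sigma\Delta} = C[\calZ, 1/\det(\calZ)] = C[G]$, so Lemma~\ref{lm:linearlydisjoint} applied to the inclusion $\R \subseteq \R \otimes_D \R$ yields that $\R$ and $C[G]$ are linearly disjoint over $C$, giving injectivity of $\psi_0$. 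Surjectivity follows from the matrix identity $1 \otimes_D \calX = (\calX \otimes_D 1)\cdot \calZ$, which shows that $\R \otimes_D \R$ is generated as an $\R$-algebra by the entries of $\calZ$ and $1/\det(\calZ)$. The inverse $\psi_0^{-1}$ is the ``$\R$-version'' of $\varphi_T$, namely $a \otimes_D b \mapsto (a \otimes_C 1)\, b(\calX \otimes \calZ)$.

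Next I would apply $F \otimes_D -$ along $c$ to $\psi_0^{-1}$. By associativity of tensor products, the target $F \otimes_D (\R \otimes_C C[G])$ is identified with $(F \otimes_D \R) \otimes_C C[G] = T \otimes_C C[G]$. The source $F \otimes_D (\R \otimes_D \R)$ is identified with $T \otimes_F T$ via the canonical $F$-algebra isomorphism $f \otimes (r_1 \otimes_D r_2) \mapsto (f \otimes_D r_1) \otimes_F (1 \otimes_D r_2)$, which is the standard presentation of the pushout $T \otimes_F T$ arising from $T = F \otimes_D \R$. The composed map $T \otimes_F T \to T \otimes_C C[G]$ is then an $F$-algebra isomorphism, and checking it on the matrix of generators $\tilde{\calX} \otimes_F 1$ and $1 \otimes_F \tilde{\calX}$ of $T \otimes_F T$ shows that it coincides with $\varphi_T$.

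The algebraic substance is concentrated in the unspecialized case, which rests on Lemma~\ref{lm:linearlydisjoint} together with the explicit identification of $(\R \otimes_D \R)^{\Sigma\Delta}$ from Lemma~\ref{lm:ringinjectivity}. The principal technical obstacle is the bookkeeping needed in the final step: one must carefully track the pushout identifications and verify that the base-changed map literally matches the formula $(a \otimes_C 1)\, b(\tilde{\calX} \otimes_C \calZ)$ defining $\varphi_T$. The hypothesis $T \neq 0$ is used only to ensure that $\tilde{\calX}$ remains invertible in $T$, so that $\tilde{\calX} \otimes_C \calZ$ is meaningful as an invertible matrix.
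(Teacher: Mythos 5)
Your proof is correct and follows essentially the same route as the paper: establish the isomorphism at the unspecialized level $\R\otimes_D\R\cong\R\otimes_C C[G]$ and then base-change along $c$ via the natural identification $F\otimes_D(\R\otimes_D\R)\cong T\otimes_F T$. The only (harmless) difference is that the paper obtains the unspecialized isomorphism by restricting the map $\varphi$ of Proposition~\ref{prop:galoisgroups} through the injection $\fraki$ of Lemma~\ref{lm:ringinjectivity} and identifying its image as $\R\otimes_C C[G]$, whereas you re-derive it directly from Lemma~\ref{lm:linearlydisjoint} applied to the $\Sigma\Delta$-simple ring $\R\subseteq\R\otimes_D\R$ together with the identification $(\R\otimes_D\R)^{\Sigma\Delta}=C[\calZ,1/\det(\calZ)]$ recorded in the proof of Lemma~\ref{lm:ringinjectivity}.
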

\begin{proof}
Let $\varphi: R\otimes_k R \rightarrow R\otimes_C C[G]$ be the isomorphism given in (\ref{eqn:varphi}) with $k^{\Sigma\Delta}=C$ and $S=C[G]$. It is easy to verify that the image of $\varphi\circ \fraki$ is $\R\otimes_C C[G]$, where $\fraki$ is given in Lemma~\ref{lm:ringinjectivity}.
 Then we have the following isomorphism:
\[
\xymatrix@R=3pt @!C=3cm{
   (F\otimes_D \R)\otimes_F (F\otimes_D\R) \ar[r] &F\otimes_D \R\otimes_D\R \ar[r]^{1\otimes \varphi\circ \fraki} & F\otimes_D \R\otimes_C C[G]\\
      a(\tilde{\calX}) \otimes_F m(\tilde{\calX})\ar@{|->}[r] & a(\tilde{\calX}) \otimes_D m(\calX) \ar@{|->}[r]& (a(\tilde{\calX})\otimes_C 1)m(\tilde{\calX}\otimes_C \calZ)
}
\]
where $a(\tilde{\calX})\in F\otimes_D \R$ and $m(\tilde{\calX})$ is a monomial in $\tilde{\calX}$.
The first isomorphism is natural (see page 624 of \cite{Lang:algebra}) and the second one is induced by $\varphi\circ \fraki$:
\begin{align*}
    a(\tilde{\calX}) \otimes_D m(\calX)& =(a(\tilde{\calX}) \otimes_D 1) m(1\otimes_D 1 \otimes_D \calX)\\
    &\longrightarrow (a(\tilde{\calX})\otimes_C 1)m(1\otimes_D \calX \otimes_C \calZ)=(a(\tilde{\calX})\otimes_C 1)m(\tilde{\calX} \otimes_C \calZ)
\end{align*}
\end{proof}

The following lemma implies that if $D$ is finitely generated over $C$ then the set of $c\in \Hom_C(D,F)$ such that $F\otimes_D \R$ is not a zero ring is a nonempty Zariski open subset of $\Hom_C(D,F)$. The proof follows from that of Lemma 2.15 of \cite{Feng-Wibmer:differentialgaloisgropus}.
\begin{lem}
\label{lm:zerorings}
 Suppose that $F$ is a field extension of $C$. There is a nonzero $a\in D$ such that for any $c\in \Hom_C(D,F)$ with $c(a)\neq 0$, $F\otimes_D\R$ is not the zero ring.
\end{lem}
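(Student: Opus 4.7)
The plan is to produce the required $a \in D$ via generic freeness of the finitely generated $D$-algebra $\R = D[\calX, 1/\det(\calX)]$.

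First I would verify that $\R$ has the structure needed to invoke such a result. Since $k = \mathrm{Frac}(D)$, $D$ is a domain, and the chain $D \hookrightarrow k \hookrightarrow R$ together with $\R \subseteq R$ shows that $D \to \R$ is injective. Hence $\R$ is a faithful, finitely generated $D$-algebra.

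Next I would apply Grothendieck's generic freeness theorem (EGA IV.6.9.1; Stacks Project, tags 051R and 051S): for a finitely generated algebra over a Noetherian domain, there is a nonzero element of the base over whose localization the algebra becomes free. This yields a nonzero $a \in D$ with $\R[1/a]$ free as a $D[1/a]$-module; since $D[1/a]$ embeds into $\R[1/a]$, the rank is at least $1$. To conclude, for any $c \in \Hom_C(D, F)$ with $c(a) \neq 0$, the map $c$ extends uniquely to $\tilde c : D[1/a] \to F$, and
$$
F \otimes_D \R \;\cong\; F \otimes_{D[1/a]} \R[1/a]
$$
is a free $F$-module of the positive rank of $\R[1/a]$ over $D[1/a]$, hence nonzero.

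The principal obstacle is justifying the Noetherianity of $D$ required by generic freeness. In the setting of the remark preceding the lemma, $D$ is finitely generated over $C$, hence Noetherian by Hilbert's basis theorem, and the argument above runs cleanly. More generally one reduces to this case by selecting a finitely generated $C$-subalgebra $D_0 \subseteq D$ containing all entries of the $A_i^{\pm 1}$ and $B_j$, running the argument for $\R_0 := D_0[\calX, 1/\det(\calX)]$ over the Noetherian domain $D_0$ to obtain $a \in D_0 \subseteq D$, and tracking the resulting non-vanishing of $F \otimes_{D_0}\R_0$ back to $F \otimes_D \R$ along the canonical surjection $D \otimes_{D_0} \R_0 \twoheadrightarrow \R$ when $c$ is restricted to $D_0$.
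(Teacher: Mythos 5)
Your core argument is correct and takes a genuinely different route from the paper's. The paper proves the lemma by producing a point: it invokes Bourbaki (Comm.\ Alg.\ V, \S 3.1, Cor.\ 3) to find a nonzero $a\in D$ such that any $c:D\to\bar F$ with $c(a)\neq 0$ extends to a homomorphism $h:\R\to\bar F$, and then observes that $b_1\otimes b_2\mapsto b_1h(b_2)$ is a ring homomorphism from $F\otimes_D\R$ onto a nonzero ring, so $F\otimes_D\R\neq 0$. You instead make $F\otimes_D\R$ itself visibly nonzero via generic freeness: after inverting $a$ the algebra $\R$ becomes free of positive rank over $D$, so every base change to a field is a nonzero free module. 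Both are standard ``spreading out'' arguments; yours gives slightly more (freeness, hence faithful flatness of $\R[1/a]$ over $D[1/a]$), while the paper's extension-of-homomorphisms argument needs nothing beyond $\R$ being a finitely generated $D$-algebra. Your verifications that $D\hookrightarrow\R$, that $D[1/a]\hookrightarrow\R[1/a]$ forces positive rank, and that $F\otimes_D\R\cong F\otimes_{D[1/a]}\R[1/a]$ when $c(a)\neq 0$ are all fine.

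The one genuine flaw is the fallback reduction in your last paragraph. The surjection $D\otimes_{D_0}\R_0\twoheadrightarrow\R$ induces a surjection $F\otimes_{D_0}\R_0\twoheadrightarrow F\otimes_D\R$, and non-vanishing does not pass forward along a surjection: a nonzero ring surjects onto the zero ring whenever the kernel is everything. So knowing $F\otimes_{D_0}\R_0\neq 0$ tells you nothing about $F\otimes_D\R$; to transfer non-vanishing you need a homomorphism \emph{out of} $F\otimes_D\R$ into something nonzero (which is exactly what the paper's extension $h$ supplies), not a map onto it. Fortunately the reduction is unnecessary: the generic freeness statements you cite are proved over an arbitrary domain as base, with no Noetherian hypothesis, and $D$ is a domain since $k=\mathrm{Frac}(D)$; alternatively, every $D$ actually used in the paper is a localization of a finitely generated $C$-algebra, hence Noetherian. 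Either way the direct argument already covers the lemma as stated, and I would simply delete the reduction.
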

\begin{proof}
Consider $c$ as a homomorphism from $D$ to $\bar{F}$, the algebraic closure of $F$.
By Corollary 3 in Section 3.1, Chapter V of \cite{bourbakicommutativealgebra}, there exists a nonzero $a \in D$ such that if $c(a)\neq 0$ then there exists a homomorphism $h$ from $\R$ to $\bar{F}$ such that $c=h|_{D}$. Now suppose that $c(a)\neq 0$ and $h$ is the extension of $c$ to $\R$. Then we have the homomorphism $F\otimes_D\R\rightarrow \bar{F}$ given by $b_1\otimes b_2 \mapsto b_1 h(b_2)$. Since $\bar{F}$ is not the zero ring, so is $F\otimes_D\R$.
\end{proof}

\begin{rem}
\label{rem:specializations}
Lemma~\ref{lm:zerorings} does not provide an explicit $a\in D$. We may find the required homomorphisms $c$ as follows. Write $R=k[X,1/\det(X)]/\frakq$ where $\frakq$ is a maximal $\Sigma\Delta$-ideal. Let $\tilde{\frakq}=\frakq\cap D[X,1/\det(X)]$. Then $\R\cong D[X,1/\det(X)]/\tilde{\frakq}$ and
$$
  F\otimes_D \R\cong F[X,1/\det(X)]/\langle \tilde{\frakq}^c \rangle
$$
where $\tilde{\frakq}^c=\{ P^c \mid \forall\,\,P\in \tilde{\frakq}\}$ and $\langle \tilde{\frakq}^c \rangle$ denotes the ideal in $F[X,1/\det(X)]$ generated by $\tilde{\frakq}^c$.
Therefore $F\otimes_D\R$ is not the zero ring if and only if $\langle \tilde{\frakq}^c \rangle\neq \langle 1 \rangle$.
\end{rem}

Remark that $T=F\otimes_D \R$ inherits the structure of $F$, \ie if we endow $F$ with a differential or difference structure then $T$ will become a differential or difference ring respectively. In the following, we assume that $\tilde{\Sigma}\subseteq \Sigma, \tilde{\Delta}\subseteq \Delta$ and $F$ is a $\tilde{\Sigma}\tilde{\Delta}$-field with algebraically closed field of constants $C=F^{\tilde{\Sigma}\tilde{\Delta}}$. Let $c: D\rightarrow F$ be a $C$-$\tilde{\Sigma}\tilde{\Delta}$-homomoprhism such that $T$ is not the zero ring. Then $T$ is a $\tilde{\Sigma}\tilde{\Delta}$-ring. Let $\frakm$ be a maximal $\tilde{\Sigma}\tilde{\Delta}$-ideal of $T$. Then $T/\frakm$ is a simple $\tilde{\Sigma}\tilde{\Delta}$-ring. Due to Proposition~\ref{prop:extensiontorsors}, for every $g\in G=\Hom_C(C[G],C)$, the map
\begin{equation}
\label{eqn:automorphism}
\rho_g: \xymatrix@R=3pt{T \ar[r] & T \otimes_k T \ar[r] & T \otimes_C C[G] \ar[r] & T \\
                             a(\tilde{\calX}) \ar@{|->}[r] &   1 \otimes_k a(\tilde{\calX})  \ar@{|->}[r] & a(\tilde{\calX}\otimes_C \calZ) \ar@{|->}[r] & a(\tilde{\calX} g(\calZ))}
\end{equation}
 is an $F$-automorphism, where $\tilde{\calX}=1\otimes_D \calX$. Let $I$ be a $\tilde{\Sigma}\tilde{\Delta}$-ideal of $T$. Denote
\begin{equation*}
\label{eqn:stabilizer}
   \stab(\frakm,I)=\{g\in G \mid \rho_g(I)\subseteq \frakm\}.
\end{equation*}
If $I=\frakm$ then we abbreviate $\stab(\frakm, I)$ as $\stab(\frakm)$.
It is clear that $\stab(\frakm)$ is a subgroup of $G$.
\begin{prop}
\label{prop:constantideal}
Let the notation be as above.
\begin{enumerate}
    \item
Suppose $I$ is a $\tilde{\Sigma}\tilde{\Delta}$-ideal of $T$. Then there is an ideal $\Phi_{\frakm,I}$ of $C[G]$ such that the following map is $T/\frakm$-$\tilde{\Sigma}\tilde{\Delta}$-isomorphic:
\begin{equation*}
\bar{\varphi}_T:
\xymatrix@R=3pt{
       T/\frakm \otimes T/I \ar[r] &  T/\frakm \otimes C[G]/\Phi_{\frakm,I}\\
                 a(\bar{\calX}_\frakm)\otimes b(\bar{\calX}_I)  \ar@{|->}[r] &  (a(\bar{\calX}_\frakm)\otimes 1)b(\bar{\calX}_\frakm\otimes \bar{\calZ})
     }
\end{equation*}
 where $\bar{\calX}_\frakm= 1\otimes \calX \mod \frakm, \bar{\calX}_I=1\otimes \calX \mod I$ and $\bar{\calZ}=\calZ \mod \Phi_{\frakm,I}$.
\item  $\stab(\frakm,I)=\{g\in G \mid g(P)=0 \,\,\forall\,\,P\in \Phi_{\frakm,I}\}.$ Consequently, $\stab(\frakm)$ is an algebraic subgroup of $G$.
\end{enumerate}
\end{prop}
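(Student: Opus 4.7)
The plan is to reduce the isomorphism $\varphi_T$ of Proposition~\ref{prop:extensiontorsors} modulo $\frakm$ on the first tensor factor, and then use Lemma~\ref{lm:correspondence} to recognise that the image of $I$ (sitting in the second factor) extends to a horizontal ideal coming from $C[G]$. Part~(2) will then fall out by tracing through the definition of $\rho_g$.

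The main preparatory step (and, I expect, the chief obstacle) is the identification $(T/\frakm)^{\tilde{\Sigma}\tilde{\Delta}} = C$. Since $\frakm$ is a maximal $\tilde{\Sigma}\tilde{\Delta}$-ideal of $T = F[\tilde{\calX},1/\det(\tilde{\calX})]$, and the specialized matrices $c(A_i), c(B_j)$ continue to satisfy the integrability conditions (as $c$ is a $C$-$\tilde{\Sigma}\tilde{\Delta}$-homomorphism), the quotient $T/\frakm$ is a $\tilde{\Sigma}\tilde{\Delta}$-Picard-Vessiot ring over $F$ for the specialized system. Because $F^{\tilde{\Sigma}\tilde{\Delta}} = C$ is algebraically closed, Proposition~6.14 of \cite{hardouin2008differential} gives the desired equality.

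For part~(1), the inverse of $\varphi_T$ is $a\otimes b\mapsto (a\otimes 1)b$, which is manifestly a $\tilde{\Sigma}\tilde{\Delta}$-homomorphism, so $\varphi_T$ is too. By flatness, $\varphi_T$ descends to a $\tilde{\Sigma}\tilde{\Delta}$-isomorphism
$$\psi: T/\frakm \otimes_F T \longrightarrow T/\frakm \otimes_C C[G], \qquad \bar{a}\otimes b \longmapsto (\bar{a}\otimes 1)\,b(\bar{\calX}_\frakm \otimes \calZ).$$
Let $\bar{J} = \psi((T/\frakm)\otimes_F I)$, a $\tilde{\Sigma}\tilde{\Delta}$-ideal of $T/\frakm\otimes_C C[G]$. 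The pair $T/\frakm \subset T/\frakm\otimes_C C[G]$ satisfies the hypotheses of Lemma~\ref{lm:correspondence}: $T/\frakm$ is $\tilde{\Sigma}\tilde{\Delta}$-simple, and $T/\frakm\otimes_C C[G]$ is generated as a $T/\frakm$-module by the constants $1\otimes C[G]$. Expanding in a $C$-basis of $C[G]$ yields $(T/\frakm\otimes_C C[G])^{\tilde{\Sigma}\tilde{\Delta}} = (T/\frakm)^{\tilde{\Sigma}\tilde{\Delta}}\otimes_C C[G] = C[G]$. Thus Lemma~\ref{lm:correspondence} furnishes $\Phi_{\frakm,I} := \bar{J}\cap C[G]$ with $\bar{J} = (T/\frakm)\otimes_C \Phi_{\frakm,I}$; passing to quotients in $\psi$ yields the claimed isomorphism $\bar{\varphi}_T$.

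For part~(2), note that the map $\rho_g$ of~(\ref{eqn:automorphism}) factors as $(\id\otimes g)\circ \rho$, where $\rho: T \to T\otimes_C C[G]$ is $a\mapsto \varphi_T(1\otimes_F a)$. Post-composing with $T\to T/\frakm$, the containment $\rho_g(I)\subseteq\frakm$ becomes $(\id\otimes g)(\bar{\rho}(I)) = 0$ in $T/\frakm$; since $(\id\otimes g)$ is a $T/\frakm$-algebra homomorphism, this is equivalent to the vanishing of $(\id\otimes g)$ on the ideal generated by $\bar{\rho}(I)$, which is exactly $\bar{J} = (T/\frakm)\otimes_C \Phi_{\frakm,I}$, and hence to $g(P) = 0$ for every $P\in \Phi_{\frakm,I}$. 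Thus $\stab(\frakm,I)$ is the zero locus of $\Phi_{\frakm,I}$ in $G$, which is Zariski closed; taking $I = \frakm$ and combining with the already-noted fact that $\stab(\frakm)$ is a subgroup gives that it is an algebraic subgroup of $G$.
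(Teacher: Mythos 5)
Your proof is correct and follows essentially the same route as the paper's: reduce $\varphi_T$ modulo $\frakm$ in the first factor, use Proposition 6.14 of \cite{hardouin2008differential} to get $(T/\frakm)^{\tilde{\Sigma}\tilde{\Delta}}=C$, and apply Lemma~\ref{lm:correspondence} to produce $\Phi_{\frakm,I}$. Your part (2), which factors $\rho_g$ as $(\id\otimes g)\circ\rho$ and uses that the kernel of the ring homomorphism $\id\otimes g$ is an ideal, is a cleaner packaging of the paper's explicit computation with $C$-bases of $\frakm$ and $T$, but it rests on the same identification $\bar J=(T/\frakm)\otimes_C\Phi_{\frakm,I}$.
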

\begin{proof}
1. Let $\varphi_T$ be the isomorphism given in Proposition~\ref{prop:extensiontorsors}. Since $\varphi_T(\frakm\otimes T)=\frakm \otimes C[G]$, $\varphi_T$ induces the $T/\frakm$-$\tilde{\Sigma}\tilde{\Delta}$-isomorphism $\tilde{\varphi}_T: T/\frakm \otimes T \rightarrow T/\frakm \otimes C[G]$ which sends $a(\bar{\calX}_\frakm)\otimes b(\calX)$ to $(a(\bar{\calX}_\frakm)\otimes 1)b(\bar{\calX}_\frakm\otimes \calZ)$. Therefore, it suffices to show that there is an ideal $\Phi_{\frakm,I}$ of $C[G]$ such that $\tilde{\varphi}_T(T/\frakm \otimes I)=T/\frakm \otimes \Phi_{\frakm,I}$. By Proposition 6.14 of \cite{hardouin2008differential} with $\Pi=\emptyset$, one has that $(T/\frakm)^{\tilde{\Sigma}\tilde{\Delta}}=F^{\tilde{\Sigma}\tilde{\Delta}}=C$. Hence $(T/\frakm\otimes C[G])^{\tilde{\Sigma}\tilde{\Delta}}=1\otimes C[G]=C[G]$. Note that $T/\frakm$ can be viewed as a subring of $T/\frakm\otimes C[G]$ and moreover $T/\frakm\otimes C[G]$ is generated by $(T/\frakm\otimes C[G])^{\tilde{\Sigma}\tilde{\Delta}}$ as a $T/\frakm$-module. Set
$$
  \Phi_{\frakm,I}=\tilde{\varphi}_T(T/\frakm\otimes I)\cap C[G].
$$
By Lemma~\ref{lm:correspondence}, the $\tilde{\Sigma}\tilde{\Delta}$-ideal $\tilde{\varphi}_T(T/\frakm\otimes I)$ is generated by $\Phi_{\frakm,I}$.
It is clear that the ideal in $T/\frakm \otimes C[G]$ generated by $\Phi_{\frakm,I}$ is $T/\frakm\otimes \Phi_{\frakm,I}$. Hence we have that $\tilde{\varphi}_T(T/\frakm\otimes I)=T/\frakm\otimes \Phi_{\frakm,I}$ as desired.

2.  Set $H=\{g\in G \mid g(P)=0 \,\,\forall\,\,P\in \Phi_{\frakm,I}\}$.
Let $\{a_i \mid i\in \I_1\}$ be a $C$-basis of $\frakm$ and let $\{a_i\mid i\in \I_1\cup \I_2\}$ be a $C$-basis of $T$. Suppose that $g\in H$ and $b\in I$. From the statement 1, one sees that $\varphi_T(\frakm\otimes T+T\otimes I)=\frakm\otimes C[G]+T\otimes \Phi_{\frakm,I}$. Hence we may write
$$
   \varphi_T(1\otimes b)=b(\tilde{\calX} \otimes \calZ)=\sum_{i\in \I_1\cup \I_2} a_i\otimes \beta_i
$$
where $\beta_i\in C[G]$ and moreover $\beta_i\in \Phi_{\frakm,I}$ if $i\in \I_2$. Using (\ref{eqn:automorphism}), one sees that
$$
   \rho_g(b)=b(\tilde{\calX} g(\calZ))=\sum_{i\in \I_1\cup \I_2} a_i\beta_i(g(\calZ))=\sum_{i\in \I_1\cup\I_2} a_ig(\beta_i)=\sum_{i\in \I_1} a_ig(\beta_i)\in \frakm.
$$
Thus $\rho_g(I)\subseteq \frakm$. In other words, $g\in \stab(\frakm,I)$. On the other hand, suppose $g\in \stab(\frakm,I)$. Let $\beta\in \Phi_{\frakm,I}$. Then there is $b\in \frakm\otimes T+T\otimes I$ such that $\varphi_T(b)=1\otimes \beta$.
Write $b=\tilde{b}+\sum_{i\in \I_2} a_i\otimes b_i$ where $\tilde{b}\in \frakm\otimes T$ and $b_i\in I$. Since $\varphi_T(\tilde{b})\in\frakm\otimes C[G]$, one sees that
$$
  \varphi_T\left(\sum_{i\in \I_2}a_i\otimes b_i\right)=\varphi_T(b-\tilde{b})=1\otimes \beta+\sum_{i\in \I_1} a_i \otimes \beta_i
$$
for some $\beta_i\in C[G]$. Using (\ref{eqn:automorphism}) again, one has that
$$
  \sum_{i\in \I_2}a_i\rho_g(b_i)=g(\beta)+\sum_{i\in \I_1}a_ig(\beta_i).
$$
 As $\rho_g(b_i)\in \frakm$, $g(\beta)+\sum_{i\in \I_1}a_ig(\beta_i)\in \frakm$. Hence $g(\beta)=0$. In othe words, $g\in H$. Thus $\stab(\frakm,I)=H$.
\end{proof}

As a corollary, we have the following:
\begin{cor}
\label{cor:maximaltomaximal}
Suppose that $\frakm'$ is another maximal $\tilde{\Sigma}\tilde{\Delta}$-ideal of $T$. Then there exists $g\in G$ such that $\rho_g(\frakm')=\frakm$. In this case $\stab(\frakm)$ is conjugate to $\stab(\frakm')$ by $g$.
\end{cor}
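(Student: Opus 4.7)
The plan is to apply Proposition~\ref{prop:constantideal} with $I=\frakm'$ to produce an ideal $\Phi=\Phi_{\frakm,\frakm'}\subseteq C[G]$ whose vanishing locus is precisely $\stab(\frakm,\frakm')$, and then to exhibit a $C$-point of $V(\Phi)$ that supplies the required $g$.

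Concretely, Proposition~\ref{prop:constantideal}(1) furnishes a $T/\frakm$-$\tilde{\Sigma}\tilde{\Delta}$-isomorphism
\[
  T/\frakm \otimes_F T/\frakm' \;\cong\; T/\frakm \otimes_C C[G]/\Phi.
\]
The crucial observation is that the left-hand side is nonzero: both $T/\frakm$ and $T/\frakm'$ are nonzero $F$-algebras, and tensor products of nonzero vector spaces over the field $F$ are nonzero. Consequently $C[G]/\Phi\neq 0$, and since $C$ is algebraically closed there exists a $C$-algebra homomorphism $C[G]/\Phi \to C$, i.e., a point $g\in G$ with $g(P)=0$ for all $P\in \Phi$. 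By Proposition~\ref{prop:constantideal}(2) this $g$ lies in $\stab(\frakm,\frakm')$, so $\rho_g(\frakm')\subseteq \frakm$.

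To upgrade this containment to equality I would check that the map $\rho_g$ of (\ref{eqn:automorphism}) is in fact a $\tilde{\Sigma}\tilde{\Delta}$-automorphism of $T$ fixing $F$. Each of the three arrows composing $\rho_g$ is a $\tilde{\Sigma}\tilde{\Delta}$-homomorphism, and the comultiplication identity $(gh)(\calZ)=g(\calZ)h(\calZ)$ (arising from $\calZ\mapsto \calZ\otimes \calZ$) yields $\rho_g\rho_{g^{-1}}=\id=\rho_{g^{-1}}\rho_g$ on the generating matrix $\tilde{\calX}$, hence on all of $T$. In particular $\rho_g(\frakm')$ is itself a maximal $\tilde{\Sigma}\tilde{\Delta}$-ideal, so the containment $\rho_g(\frakm')\subseteq \frakm$ must be an equality. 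The same identity shows $g\mapsto \rho_g$ is a group homomorphism, so for any $h\in \stab(\frakm')$ one has $\rho_{ghg^{-1}}(\frakm)=\rho_g\rho_h\rho_{g^{-1}}(\frakm)=\rho_g\rho_h(\frakm')=\rho_g(\frakm')=\frakm$; the symmetric argument using $\rho_{g^{-1}}(\frakm)=\frakm'$ provides the reverse inclusion, giving $\stab(\frakm)=g\stab(\frakm')g^{-1}$.

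The only delicate point is the non-vanishing of $C[G]/\Phi$, and it rests on being careful about whether the tensor products in Proposition~\ref{prop:constantideal} are formed over $F$ or over $C$. Once one recognises that the left-hand side of the displayed isomorphism is a tensor product over the field $F$ of two nonzero algebras, everything else reduces to formal manipulations of the $G$-action on maximal $\tilde{\Sigma}\tilde{\Delta}$-ideals.
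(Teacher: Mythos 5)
Your proof is correct and follows exactly the route the paper intends: the corollary is stated without proof as an immediate consequence of Proposition~\ref{prop:constantideal}, and your argument --- apply it with $I=\frakm'$, observe that $T/\frakm\otimes_F T/\frakm'\neq 0$ forces $\Phi_{\frakm,\frakm'}\neq C[G]$, take a $C$-point via the Nullstellensatz, and upgrade $\rho_g(\frakm')\subseteq\frakm$ to equality using that $\rho_g$ is a $\tilde{\Sigma}\tilde{\Delta}$-automorphism sending maximal $\tilde{\Sigma}\tilde{\Delta}$-ideals to maximal ones --- is precisely the standard torsor-transitivity argument being invoked. The conjugacy computation via the homomorphism $g\mapsto\rho_g$ is also fine.
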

\section{A condition for a $\sigma\delta$-Picard-Vessiot ring to be $\sigma$-Picard-Vessiot}
\label{sec:pvrings}
In the remainder of this paper, we will focus on $\Sigma\Delta$-rings with at most one single automorphism $\sigma$ and at most one single derivative $\delta$. When $\Sigma$ and $\Delta$ are specified, we shall use the prefixes $\sigma$-, $\delta$-, $\sigma\delta$- and the superscripts $(\cdot)^\sigma,(\cdot)^\delta,(\cdot)^{\sigma\delta}$ instead of $\Sigma\Delta$- or $(\cdot)^{\Sigma\Delta}$. 

Throughout this section, let $k_0$ be a $\delta$-field with algebraically closed field of constants $C=k_0^\delta$ and let $k_0(x)$ is the $\sigma\delta$-field with $\sigma(x)=x+1$. We consider the following $\sigma\delta$-linear system over $k_0(x)$:
\begin{equation}
\label{eqn:sigmadelta-eqn}
\sigma(Y)=AY,\,\,\delta(Y)=BY
\end{equation}
where $A\in \GL_n(k_0(x)), B\in \gl_n(k_0(x))$ and $A,B$ satisfy the integrability condition: $\sigma(B)A=\delta(A)+AB$.
\begin{notation}
\label{not:sufficientcondition}
Throughout this section, we further assume
\begin{itemize}
\item $R$ is a $\sigma\delta$-Picard-Vessiot ring over $k_0(x)$ for (\ref{eqn:sigmadelta-eqn}). 
\item $\calX$ is a fixed fundamental solution matrix in $\GL_n(R)$.
\item
$K$ is a $\delta$-Picard-Vessiot extension field of $k_0$ for $\delta(Y)=B(c)Y$ for some $c\in C$, where $B(c)$ denotes replacing $x$ with $c$ in $B$.
\item
$\hat{R}$ is a $\sigma\delta$-Picard-Vessiot ring over $K(x)$ for (\ref{eqn:sigmadelta-eqn}) containing $R$.
\item
$\K,\hat{\K}$ are the total rings of fractions of $R,\hat{R}$ respectively.
\item
$\breve{R}$ is the composite of $R$ and $\K^\sigma (x)$ inside $\K$.
\end{itemize}
\end{notation}
Note that $\hat{R}$ always exists. For instance, let $\frakm$ be a maximal $\sigma\delta$-ideal of $K(x)\otimes_{k_0(x)}R$. Then $(K(x)\otimes_{k_0(x)}R)/\frakm$ is a $\sigma\delta$-Picard-Vessiot ring over $K(x)$ for (\ref{eqn:sigmadelta-eqn}). Since $R$ is $\sigma\delta$-simple, the $\sigma\delta$-homomorphism $R\rightarrow (K(x)\otimes_{k_0(x)}R)/\frakm, a\mapsto \overline{1\otimes a}$ is injective and thus we may consider $R$ as a subring of $\hat{R}$.

If $R$ is a simple $\sigma$-ring then it will be a $\sigma$-Picard-Vessiot ring for $\sigma(Y)=AY$. However $R$ is generally not $\sigma$-simple. In this section, we shall show that $\breve{R}$ is a simple $\sigma$-ring and thus it is a $\sigma$-Picard-Vessiot ring for $\sigma(Y)=AY$ over $\K^\sigma(x)$.
We start with the following lemma.
\begin{lem}
\label{lm:sigmaconstant}
$\K^\sigma$ is a $\delta$-field.
\end{lem}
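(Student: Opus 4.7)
The plan is to check two things: that $\K^\sigma$ is stable under $\delta$ and that every nonzero element of $\K^\sigma$ has an inverse (necessarily $\sigma$-invariant) in $\K$. The first point is immediate from the commutation $\sigma\delta=\delta\sigma$: for $a\in\K^\sigma$ one has $\sigma(\delta(a))=\delta(\sigma(a))=\delta(a)$, so $\delta(a)\in\K^\sigma$. Thus the real content of the lemma is that $\K^\sigma$ is a field.

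For this, I would invoke the decomposition already used in the proof of Proposition~\ref{prop:linearindependence2}: by Lemma 6.8 of \cite{hardouin2008differential}, one has $R=Re_0\oplus\cdots\oplus Re_{s-1}$ with each $Re_i$ an integral domain that is $\sigma^s\delta$-simple. Because $\delta$ kills every idempotent (from $\delta(e)=\delta(e^2)=2e\delta(e)$ one gets $(1-2e)\delta(e)=0$, and $1-2e$ is its own inverse, hence $\delta(e)=0$), the $\sigma\delta$-simplicity of $R$ forces $\sigma$ to act transitively on $\{e_0,\ldots,e_{s-1}\}$: otherwise the sum of a proper $\sigma$-orbit would be a nontrivial $\sigma\delta$-invariant idempotent, generating a proper $\sigma\delta$-ideal. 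Since $s$ is the orbit size, $\sigma$ cycles the $e_i$ and therefore induces ring isomorphisms $\sigma\colon Re_i\xrightarrow{\sim} Re_{i+1 \bmod s}$. Passing to total rings of fractions, $\K=\K_0\oplus\cdots\oplus\K_{s-1}$ where $\K_i$ is the field of fractions of $Re_i$, and $\sigma$ induces field isomorphisms $\K_i\xrightarrow{\sim}\K_{i+1}$.

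To finish, take $a\in\K^\sigma\setminus\{0\}$ and write $a=\sum_i a_ie_i$ with $a_i\in\K_i$. The condition $\sigma(a)=a$ together with $\sigma(e_i)=e_{i+1}$ forces $a_i=\sigma^i(a_0)$, so if $a\ne 0$ then $a_0\ne 0$ and hence every $a_i$ is nonzero in the field $\K_i$. Setting $b=\sum_i \sigma^i(a_0^{-1})\,e_i\in\K$ gives $ab=\sum_i e_i=1$, and $\sigma(b)=b$ by the same cyclicity, so $b\in\K^\sigma$ inverts $a$.

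The only delicate point is the cyclic action of $\sigma$ on the idempotents; this is essentially built into the labelling of Lemma 6.8 of \cite{hardouin2008differential}, but is worth spelling out using $\sigma\delta$-simplicity together with the fact that $\delta$ annihilates idempotents, as sketched above. Everything else is a direct verification, and the result is that $\K^\sigma$ is a field stable under $\delta$, i.e., a $\delta$-field.
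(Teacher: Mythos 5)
Your proof is correct, but it follows a genuinely different route from the one in the paper. The paper argues that every nonzero $a\in\K^\sigma$ fails to be a zero divisor of $\K$: writing $a=p_1/q_1$ with $p_1,q_1\in R$, if $a$ were a zero divisor then so would be $p_1$, and Lemma 19 of \cite{feng2010liouvillian} gives $\prod_{i=1}^s\sigma^i(p_1)=0$; since $\sigma(a)=a$ this forces $a$ (hence $p_1$) to be nilpotent, and reducedness of $R$ yields $a=0$. Invertibility of non-zero-divisors is then automatic because $\K$ is the total ring of fractions, and $\sigma$-invariance of the inverse is immediate. You instead unpack the idempotent decomposition $R=Re_0\oplus\cdots\oplus Re_{s-1}$ into domains, prove (correctly, and this is the nicest part of your write-up) that $\delta$ annihilates idempotents and hence that $\sigma\delta$-simplicity forces $\sigma$ to permute the $e_i$ in a single cycle, and then exhibit the inverse componentwise in $\K=\K_0\oplus\cdots\oplus\K_{s-1}$. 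The two arguments rest on the same structural fact about simple difference rings: the cited Lemma 19 is essentially a packaged consequence of the cyclic decomposition you re-derive. What your version buys is self-containedness and an explicit description of the inverse; what it costs is length, plus one small point you should make explicit, namely that $\sigma$ permutes the set $\{e_0,\dots,e_{s-1}\}$ at all (this holds because they are the primitive idempotents of $R$ and $\sigma$ is a ring automorphism). You also explicitly verify the $\delta$-stability of $\K^\sigma$, which the paper leaves implicit; that is a welcome addition rather than a defect.
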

\begin{proof}
We first show that any nonzero element of $\K^\sigma$ is not a zero divisor of $\K$. Suppose that $a\in \K^\sigma\setminus\{0\}$ and $a$ is a zero divisor of $\K$, \ie there is a nonzero $b\in \K$ such that $ab=0$. Write $a=p_1/q_1, b=p_2/q_2$ where $p_i,q_i\in R$ and neither of $q_1,q_2$ is a zero divisor. Then $p_1$ is a zero divisor. Lemma 19 of \cite{feng2010liouvillian} implies that there is a positive integer $s$ such that $\prod_{i=1}^s \sigma^i(p_1)=0$. This implies that $\prod_{i=1}^s \sigma^i(a)=0$. Since $\sigma(a)=a$, $a^{s+1}=0$. In other words, $p_1^s=0$. Since $R$ is reduced, $p_1=0$ and thus $a=0$, a contradiction. Now for each $a\in \K^\sigma\setminus\{0\}$, since $a$ is not a zero divisor of $\K$, there is $b\in \K$ such that $ab=1$. It is clear that $\sigma(b)=b$, \ie $b\in \K^\sigma$. So $\K^\sigma$ is a field.
\end{proof}

It was shown in \cite{van2006galois} as well as \cite{Wibmer:SMLtheorems} that every $\sigma$-Picard Vessiot ring over $F(x)$ can be embedded into the ring of sequences $\Seq_F$, where $F$ is a field. We shall first show that the $\sigma\delta$-Picard-Vessiot ring $R$ can be embedded into the ring of sequences $\Seq_K$. 

In the following, we fix a $c\in C$ such that for each $i\in\mathbb{Z}$, $A(c+i), B(c+i)$ are well-defined and $\det(A(c+i))\neq 0$. Let $K$ be a $\delta$-Picard-Vessiot extension field of $k_0$ for $\delta(Y)=B(c)Y$.

\begin{rem}
\label{rm:deltapv}
For each $i\in \mathbb{Z}$, $K$ is also a $\delta$-Picard-Vessiot extension field of $k_0$ for $\delta(Y)=B(c+i)Y$. Since $\sigma(B)A=\delta(A)+AB$, one has that
$$
   B(c+1)A(c)=\delta(A(c))+A(c)B(c).
$$
By induction, one can verify that for each $s>0$,
$$
  B(c+s)\prod_{i=1}^{s} A(c+s-i)=\delta\left(\prod_{i=1}^{s} A(c+s-i)\right)+\left(\prod_{i=1}^{s} A(c+s-i)\right) B(c).
$$
Similarly, since $BA^{-1}=\delta(A^{-1})+A^{-1}\sigma(B)$, one has that for each $s<0$,
$$
  B(c+s)\prod_{i=s}^1 A^{-1}(c+i)=\delta\left(\prod_{i=s}^{1} A^{-1}(c+i)\right)+\left(\prod_{i=s}^1 A^{-1}(c+i)\right) B(c).
$$
As $\det(\prod_{i=1}^{s} A(c+s-i))\neq 0$ and $\det(\prod_{i=s}^1 A^{-1}(c+i))\neq 0$, the above two equalities imply that the systems $\delta(Y)=B(c)Y$ and $\delta(Y)=B(c+s)Y$ are equivalent over $k_0$. Hence they have the same $\delta$-Picard-Vessiot  extension fields.
\end{rem}

The ring of sequences $\Seq_K$ is defined to be the set
$$
   \Seq_K=\{(c_0,c_1,\dots)\mid c_i\in K\}/\sim
$$
where $(b_0,b_1,\dots)\sim (c_0,c_1,\dots)$ if there is a nonnegative integer $d$ such that $b_i=c_i$ for all $i\geq d$.
We may endow $\Seq_K$ with a $\sigma\delta$-ring structure by setting
\begin{align*}
  \delta((c_0,c_1,\dots))=(\delta(c_0),\delta(c_1),\dots),\,\,
  \sigma((c_0,c_1,\dots))=(c_1,c_2,\dots).
\end{align*}
By sending $a\in K$ into $(a,a,\dots)$, we can embed $K$ into $\Seq_K$ and moreover this embedding map is a $\delta$-homomorphism. Furthermore, we can embed $K(x)$ into $\Seq_K$ by sending $f(x)\in K(x)$ to
$$
    (0,\dots,0,f(c+\nu_f),f(c+\nu_f+1),\dots)
$$
where $\nu_f$ is a nonnegative integer such that $f(x)$ is well-defined at $x=c+i$ for all $i\geq \nu_f$. One can verify that this embedding map is a $\sigma\delta$-homomorphism. Under this embedding map, we may consider $K(x)$ as a subring of $\Seq_K$.
\begin{prop}
\label{prop:embeddingmap}
 $R$ can be embedded over $k_0(x)$ into $\Seq_K$.
\end{prop}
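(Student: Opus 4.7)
The plan is to produce a nonzero $\sigma\delta$-homomorphism $\Phi\colon R\to \Seq_K$ over $k_0(x)$; since $R$ is $\sigma\delta$-simple, any such $\Phi$ is automatically injective. I will build $\Phi$ by first exhibiting a single $\delta$-specialization of a carefully chosen $\sigma\delta$-simple subring of $R$ that sends $x$ to $c$ and lands in $K$, and then iterating with $\sigma$ to assemble the required sequence.

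Choose a $\sigma$-closed set $P$ of monic irreducibles of $k_0[x]$ such that the denominators of every entry of $A$, $B$, $A^{-1}$ divide a product of elements of $P$; the hypothesis on $c$ guarantees that no shift $x-c-j$ lies in $P$. Set $D:=k_0[x][p^{-1}:p\in P]$. Then $D$ is a $\sigma\delta$-simple subring of $k_0(x)$ with fraction field $k_0(x)$, contains $A$, $B$, $\det(A)^{-1}$, and $x-c-j$ is a non-unit of $D$ for every $j\in\mathbb{Z}$. By the argument given in the proof of Lemma~\ref{lm:ringinjectivity}, $\R:=D[\calX,\det(\calX)^{-1}]$ is $\sigma\delta$-simple, and $(x-c)\R$ is a proper $\delta$-ideal of $\R$. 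Pick a maximal $\delta$-ideal $\frakm$ of $\R$ containing $(x-c)\R$. Since $D/(x-c)D\cong k_0$ is a field, the image of $D$ in $\R/\frakm$ is $k_0$, and the image of $\calX$ is a fundamental solution matrix of $\delta(Y)=B(c)Y$ over $k_0$. Hence $\R/\frakm$ is a $\delta$-Picard-Vessiot ring for $\delta(Y)=B(c)Y$ over $k_0$, and the uniqueness of $\delta$-Picard-Vessiot rings (since $k_0^\delta=C$ is algebraically closed) together with the hypothesis that $K$ is the $\delta$-Picard-Vessiot extension field of $k_0$ for this equation yields a $\delta$-$k_0$-embedding $\R/\frakm\hookrightarrow K$. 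Composing with the quotient map produces a $\delta$-homomorphism $\pi_0\colon\R\to K$ with $\pi_0(x)=c$.

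Now set $\pi_j:=\pi_0\circ\sigma^j\colon\R\to K$ for $j\ge 0$; each $\pi_j$ is a $\delta$-homomorphism with $\pi_j(x)=c+j$. Define $\Phi(r):=(\pi_j(r))_{j\ge 0}\in\Seq_K$. Then $\Phi$ is $\sigma$-equivariant, since the shift of $(\pi_j(r))_j$ equals $(\pi_{j+1}(r))_j=(\pi_j(\sigma r))_j$, and $\delta$-equivariant componentwise. Restricted to $D$, $\Phi$ coincides with the standard embedding $f\mapsto(f(c+j))_j$, so for any nonzero $f\in D$ the sequence $\Phi(f)$ is eventually nonzero and hence a unit of $\Seq_K$. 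Therefore $\Phi$ extends to a $\sigma\delta$-$k_0(x)$-algebra homomorphism $R=\R[(D\setminus\{0\})^{-1}]\to\Seq_K$, which is injective by the $\sigma\delta$-simplicity of $R$. The main obstacle is the middle step, namely identifying $\R/\frakm$ as a $\delta$-Picard-Vessiot ring for $\delta(Y)=B(c)Y$ over $k_0$ so that the classical uniqueness theorem can be invoked to embed it into $K$; once $\pi_0$ is in hand, the passage to $\Seq_K$ is routine bookkeeping.
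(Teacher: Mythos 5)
Your strategy is genuinely different from the paper's: the paper builds a $\sigma\delta$-Picard-Vessiot ring directly inside $\Seq_K$ (via the recursion $W_0=I_n$, $W_s=A(c+s-1)W_{s-1}$, multiplied by a fundamental matrix $U$ of $\delta(Y)=B(c)Y$) and then invokes uniqueness of Picard-Vessiot rings over $K(x)$ to embed $\hat R\supseteq R$; you instead specialize a $\sigma\delta$-simple model $\R$ of $R$ at $x=c$ and spread the resulting $\delta$-point along the $\sigma$-orbit. Most of your steps are sound: the $\sigma\delta$-simplicity of $D$ and of $\R$, the identification of $\R/\frakm$ as a $\delta$-Picard-Vessiot ring for $\delta(Y)=B(c)Y$ over $k_0$ and its embedding into $K$ by uniqueness, the equivariance of $\Phi$, the extension to $R=\R[(D\setminus\{0\})^{-1}]$, and injectivity from simplicity. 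If completed, your route even lands $R$ in $\Seq_K$ over $k_0(x)$ without passing through $\hat R$.

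There is, however, one genuine gap: the assertion that $(x-c)\R$ is a \emph{proper} ideal of $\R$. This does not follow from the argument of Lemma~\ref{lm:ringinjectivity}, which only yields $\sigma\delta$-simplicity of $\R$ and says nothing about whether $x-c$ becomes a unit in $\R=D[\calX,1/\det(\calX)]$. Properness of $(x-c)\R$ is precisely the statement that $k_0\otimes_D\R$ (formed via $x\mapsto c$) is not the zero ring, i.e.\ that the fibre of $\mathrm{Spec}(\R)\to\mathrm{Spec}(D)$ over the maximal ideal $(x-c)$ is nonempty --- exactly the issue that Lemma~\ref{lm:zerorings} is designed to address, and which that lemma controls only away from the zero set of an unspecified nonzero $a\in D$. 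If $(x-c)\R=\R$, there is no maximal $\delta$-ideal containing it and $\pi_0$ does not exist, so the whole construction collapses. The claim is in fact true, but it needs an argument; for example: by Lemma~\ref{lm:zerorings} there is a nonzero $a\in D$ such that every evaluation $x\mapsto c+j$ with $a(c+j)\neq 0$ extends to a homomorphism $\R\to\overline{k_0}$, so $(x-c-j)\R\neq \R$ for all but finitely many $j\in\Z$; since $\sigma$ is a ring automorphism of $\R$ carrying $(x-c-j)\R$ onto $(x-c-j+1)\R$, either all of these ideals are proper or none is, hence all are, and in particular $(x-c)\R\neq\R$. (Alternatively, $\R$ is torsion-free, hence flat, and of finite type over the principal ideal domain $D$, so the image of $\mathrm{Spec}(\R)\to\mathrm{Spec}(D)$ is a $\sigma$-stable open set with finite complement, which must be all of $\mathrm{Spec}(D)$ because the $\sigma$-action on the closed points of $\mathrm{Spec}(D)$ has no finite orbits.) With this step supplied, your proof is correct.
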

\begin{proof}
It suffices to show that $\hat{R}$ can be $K(x)$-embedded into $\Seq_K$.  To this end, we only need to show that there is a $\sigma\delta$-Picard-Vessiot ring over $K(x)$ for (\ref{eqn:sigmadelta-eqn}) inside $\Seq_K$, because all $\sigma\delta$-Picard-Vessiot rings over $K(x)$ for (\ref{eqn:sigmadelta-eqn}) are isomorphic.  By Proposition 2.4 of \cite{Wibmer:SMLtheorems}, there is a $\sigma$-Picard-Vessiot ring over $K(x)$ for $\sigma(Y)=AY$ inside $\Seq_K$. Precisely, set $W=(W_0, W_1,\dots)$ with
$$
   W_0=I_n,  W_s=A(c+s-1)W_{s-1}, \forall s\geq 1.
$$
Then $K(x)[W,\frac{1}{\det(W)}]$ is a $\sigma$-Picard-Vessiot ring over $K(x)$ for $\sigma(Y)=AY$.
Let $U\in {\rm GL}_n(K)$ be a fundamental matrix of $\delta(Y)=B(c)Y$.
We then have that $\sigma(WU)=\sigma(W)U=AWU$. Moreover, for each $s\geq 0$,
\begin{align*}
\delta(W_s U)&=\delta(W_s)U+W_s\delta(U)\\
       &=(\delta(W_s)W_s^{-1}+W_s B(c)W_s^{-1})W_sU=B(c+s)W_sU.
\end{align*}
The last equality holds because of Remark~\ref{rm:deltapv}. Hence $\delta(WU)=BWU$. In other words, $WU$ is a fundamental solution matrix of (\ref{eqn:sigmadelta-eqn}). Note that
$K(x)[W,\frac{1}{\det(W)}]=K(x)[WU, \frac{1}{\det(WU)}]$. Thus $K(x)[WU,\frac{1}{\det(WU)}]$ is a $\sigma\delta$-ring and moreover since it is $\sigma$-simple, it is $\sigma\delta$-simple. Consequently, $K(x)[WU,\frac{1}{\det(WU)}]$ is a $\sigma\delta$-Picard-Vessiot ring over $K(x)$ for (\ref{eqn:sigmadelta-eqn}).
\end{proof}

\begin{exam}
\label{exam:sigmadeltaPV}
We shall construct a $\sigma\delta$-Picard-Vessiot ring over $C(m,t)$ for (\ref{eqn:Tchebychev}).
Consider the following linear differential equation
\begin{equation*}
\label{EQ19}
\delta(Y)=\begin{pmatrix}
-\frac{t}{1-t^2}&\frac{1}{1-t^2}\\
0&0
\end{pmatrix}Y.
\end{equation*}
We have that
$$
    U=\begin{pmatrix} t+\sqrt{t^2-1} & t-\sqrt{t^2-1} \\ 1 & 1 \end{pmatrix}
$$
is a fundamental solution matrix and thus $K=C(t,\sqrt{t^2-1})$ is a $\delta$-Picard-Vessiot exntesion of $C(t)$ for the above equation. Set $A=\begin{pmatrix} 0 & 1 \\ -1 & 2t \end{pmatrix}$ and
$$
    W=\left(U,AU, A^2U,A^3U,\dots\right).
$$
Then $C(m,t)[W, 1/\det(W)]$ is a $\sigma\delta$-Picard-Vessiot ring over $C(m,t)$ for (\ref{eqn:Tchebychev}). We claim that $C(m,t)[W,1/\det(W)]$ is an integral domain. An easy calculation yields that $d=U^{-1}AU=\diag(t-\sqrt{t^2-1}, t+\sqrt{t^2-1})$.  Therefore
$$
     W=Ud^m =\begin{pmatrix} (t-\sqrt{t^2-1})^{m-1} & (t+\sqrt{t^2-1})^{m-1} \\ (t-\sqrt{t^2-1})^{m} & (t+\sqrt{t^2-1})^{m}\end{pmatrix}.
$$
Let $\eta=(t+\sqrt{t^2-1})^{m-1}$. Then $C(m,t)[W,1/\det(W)]=K(m)[\eta,\frac{1}{\eta}]$. Due to Corollary 2.4 of \cite{LiWuZheng:testinglineardependence}, $\eta$ is transcendental over $K(m)$. This implies that $K(m)[\eta,\frac{1}{\eta}]$ is an integral domain.
\end{exam}

\begin{exam}
\label{exam:sigmadeltagroups}
Let us compute the corresponding $\sigma\delta$-Galois group of the system (\ref{eqn:Tchebychev}). In this example as well as the examples in the remainder of this paper, we always embed the Galois group into $\GL_2(C)$.
Consider the $C(m,t)$-$\sigma\delta$-homomorphism
\begin{align*}
   \varphi: C(m,t)[X,1/\det(X)] &\longrightarrow C(m,t)[W,1/\det(W)]\\
                           f(X)&\longmapsto f(W)
\end{align*}
where $W$ is given as in Example~\ref{exam:sigmadeltaPV}.
Let us calculate $\ker(\varphi)$. Set $I=\langle f_1 ,f_2, f_3 \rangle$, where
\begin{equation}
\label{eqn:generators}
   f_1=X_{11}X_{12}-1, f_2=X_{21}X_{22}-1, f_3=(X_{11}X_{22})^2-2tX_{11}X_{22}+1.
\end{equation}
We claim that $\ker(\varphi)=I$. It is easy to verify that $I\subset \ker(\varphi)$. Suppose that $f\in \ker(\varphi)\cap C(m,t)[X]$. There are positive integers $d_1, d_2$ such that
$$
   f\equiv X_{11}^{d_1}X_{22}^{d_2}f\equiv \sum_{i=0}^s (a_{i,1}(X_{11}X_{22})+a_{i,0})X_{11}^i \mod I,
$$
where $a_{i,1},a_{i,0}\in C(m,t)$. Since $f\in \ker(\varphi)$, $\sum_{i=0}^s (a_{i,1}(X_{11}X_{22})+a_{i,0})X_{11}^i\in \ker(\varphi)$. In other words,
$$
  \sum_{i=0}^s (a_{i,1}(t+\sqrt{t^2-1})+a_{i,0})\eta^i=0
$$
where $\eta=(t+\sqrt{t^2-1})^{m-1}$.
As $\eta$ is transcendental over $K$, $a_{i,1}(t+\sqrt{t^2-1})+a_{i,0}=0$ for all $i$. So $a_{i,1}=a_{i,0}=0$ for all $i$. Consequently, $f\in I$ and then $I=\ker(\varphi)$. From this, we have
\begin{align*}
   G&=\{g\in \GL_2(C) \mid \rho_g(P)=P(Xg)\in I, \forall\,\,P\in I\}\\
   &=\{(g_{ij})\in \GL_2(C) \mid g_{11}g_{12}=0, g_{21}g_{22}=0, g_{11}g_{22}+g_{12}g_{21}=1\}.
\end{align*}
\end{exam}

From the proof of Proposition~\ref{prop:embeddingmap}, one can easily see that
\begin{cor}
\label{cor:sigmapv}
$\hat{R}$ is a $\sigma$-Picard-Vessiot ring over $K(x)$ for $\sigma(Y)=AY$ and $\hat{R}^\sigma=K$.
\end{cor}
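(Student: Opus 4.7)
The plan is to invoke the uniqueness of $\sigma\delta$-Picard-Vessiot rings and then read off both claims from the explicit construction carried out inside $\Seq_K$ in the proof of Proposition~\ref{prop:embeddingmap}. First I would verify that $K(x)^{\sigma\delta}=C$ so that the uniqueness hypothesis applies: any $f=p/q\in K(x)$ with $\sigma(f)=f$ satisfies $p(x+1)q(x)=p(x)q(x+1)$, which forces $f\in K$, so $K(x)^\sigma=K$; and since $K$ is a $\delta$-Picard-Vessiot extension of $k_0$ with $k_0^\delta=C$ algebraically closed, we get $K^\delta=C$. Hence $K(x)^{\sigma\delta}=C$ is algebraically closed, and by Proposition~6.16 of \cite{hardouin2008differential} the $\sigma\delta$-Picard-Vessiot ring $\hat{R}$ is unique up to $\sigma\delta$-$K(x)$-isomorphism.

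Next I would identify $\hat{R}$ with the concrete model inside $\Seq_K$ constructed in the proof of Proposition~\ref{prop:embeddingmap}, namely $K(x)[WU,1/\det(WU)]$ with $W=(W_0,W_1,\dots)$ determined by $W_0=I_n$, $W_s=A(c+s-1)W_{s-1}$, and $U\in\GL_n(K)$ a fundamental matrix of $\delta(Y)=B(c)Y$. Since $U\in\GL_n(K)$ and $K\subseteq K(x)$, we have
\[
K(x)[WU,1/\det(WU)] \;=\; K(x)[W,1/\det(W)].
\]
But by Proposition~2.4 of \cite{Wibmer:SMLtheorems} cited in that proof, $K(x)[W,1/\det(W)]$ is already a $\sigma$-Picard-Vessiot ring over $K(x)$ for $\sigma(Y)=AY$. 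Transporting this structure across the $\sigma\delta$-isomorphism with $\hat{R}$ shows that $\hat{R}$ itself is a $\sigma$-Picard-Vessiot ring over $K(x)$ for $\sigma(Y)=AY$.

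For the constant claim, I would use the same embedding $\hat{R}\hookrightarrow\Seq_K$. A sequence $(c_0,c_1,\dots)\in\Seq_K$ is $\sigma$-invariant exactly when $(c_1,c_2,\dots)\sim(c_0,c_1,\dots)$, i.e.\ when it is eventually constant, so $\Seq_K^{\sigma}=K$ via the map sending such a class to its eventual value. Consequently $\hat{R}^{\sigma}\subseteq\Seq_K^{\sigma}=K$. The reverse inclusion is immediate since $\sigma$ acts trivially on $K\subseteq K(x)\subseteq\hat{R}$, giving $\hat{R}^{\sigma}=K$.

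No serious obstacle is expected; the only point requiring a little care is the justification of uniqueness for $\hat{R}$, i.e.\ the computation $K(x)^{\sigma\delta}=C$, but this is a short direct check as above. Everything else is an immediate bookkeeping consequence of the explicit construction already made in Proposition~\ref{prop:embeddingmap}.
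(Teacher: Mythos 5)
Your proof is correct and takes essentially the same route as the paper's: the paper likewise identifies $\hat{R}$ (via uniqueness of $\sigma\delta$-Picard-Vessiot rings) with the explicit model $K(x)[WU,1/\det(WU)]=K(x)[W,1/\det(W)]$ from Proposition~\ref{prop:embeddingmap}, invokes its $\sigma$-simplicity as a $\sigma$-Picard-Vessiot ring, and deduces $\hat{R}^\sigma=K$ from $(\Seq_K)^\sigma=K$. Your explicit verification that $K(x)^{\sigma\delta}=C$ is a welcome detail the paper leaves implicit.
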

\begin{proof}
For the first assertion, it suffices to show that $\hat{R}$ is $\sigma$-simple.
The proof of Proposition~\ref{prop:embeddingmap} implies that $\hat{R}$ is $\sigma\delta$-isomorphic to $K(x)[WU, 1/\det(WU)]=K(x)[W,1/\det(W)]$. Since $K(x)[W,1/\det(W)]$ as a $\sigma$-Picard-Vessiot ring for $\delta(Y)=AY$ is $\sigma$-simple, $\hat{R}$ is $\sigma$-simple. The second assertion follows from the fact that $(\Seq_K)^\sigma=K$.
\end{proof}

The following example implies that not all $\sigma\delta$-Picard-Vessiot rings are $\sigma$-simple.
\begin{exam}
Consider the $\sigma\delta$-system
$$
  \sigma(y)=y,\delta(y)=2ty.
$$
Then $C(x,t)[y,1/y]$ is a $\sigma\delta$-Picard-Vessiot ring over $C(x,t)$ for this system. While $C(x,t)[y,1/y]$ is not a $\sigma$-Picard-Vessiot ring over $C(x,t)$ for $\sigma(y)=y$, because the $\sigma$-ideal generated by $y+1$ is nontrivial. However, set $K=C(t,e^{t^2})$ which is a $\delta$-Picard-Vessiot extension field of $C(t)$ for $\delta(y)=2ty$. Then $K(x)$ is a $\sigma\delta$-Picard-Vessiot ring over $K(x)$ for the above $\sigma\delta$-system and it is also a $\sigma$-Picard-Vessiot ring over $K(x)$ for $\sigma(y)=y$.
\end{exam}

In what follows, we shall show that not only $\hat{R}$ but also $\breve{R}$ are $\sigma$-simple.
\begin{lem}
\label{lm:intersection} $\K\cap K(x)=\K^\sigma(x)$.
\end{lem}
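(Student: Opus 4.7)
The inclusion $\K^\sigma(x) \subseteq \K \cap K(x)$ reduces to showing $\K^\sigma \subseteq K$. By Corollary~\ref{cor:sigmapv}, $\hat{R}$ is $\sigma$-simple with $\hat{R}^\sigma = K$, and a standard argument extends this to $\hat{\K}^\sigma = K$: for any $f = a/b \in \hat{\K}^\sigma$ with $b$ a non-zero-divisor in $\hat{R}$, the set $\{r \in \hat{R} \mid rf \in \hat{R}\}$ is a nonzero $\sigma$-stable ideal of $\hat{R}$, hence all of $\hat{R}$, so $f \in \hat{R}^\sigma = K$. Since $R \subseteq \hat{R}$ induces $\K \subseteq \hat{\K}$, we obtain $\K^\sigma \subseteq K$, hence $\K^\sigma(x) \subseteq K(x)$; combined with the obvious $\K^\sigma(x) \subseteq \K$ (using $x \in \K$, transcendental over $\K^\sigma$ since an algebraic dependence would generate infinitely many roots via shifts), this gives the easy inclusion.

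For the converse, take $f \in \K \cap K(x)$ and write $f = p(x)/q(x)$ with $p, q \in K[x]$ coprime and $q$ monic. Fix a $\K^\sigma$-basis $\{e_\lambda\}_{\lambda \in \Lambda}$ of $K$ and decompose $p = \sum_\lambda e_\lambda p_\lambda$ and $q = \sum_\lambda e_\lambda q_\lambda$ with $p_\lambda, q_\lambda \in \K^\sigma[x]$, only finitely many nonzero. The identity $fq = p$, viewed inside $\hat{\K}$, becomes
\[
\sum_\lambda e_\lambda (fq_\lambda - p_\lambda) = 0,
\]
with each coefficient $fq_\lambda - p_\lambda$ lying in $\K$. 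Granting that $K$ and $\K$ are linearly disjoint over $\K^\sigma$ inside $\hat{\K}$, each summand vanishes; picking any index $\lambda$ with $q_\lambda \neq 0$ (which exists as $q \neq 0$) then yields $f = p_\lambda / q_\lambda \in \K^\sigma(x)$.

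The main obstacle is proving the linear disjointness of $K$ and $\K$ over $\K^\sigma$ in $\hat{\K}$. The natural attack via Lemma~\ref{lm:linearlydisjoint} applied to $\K \subseteq \hat{\K}$ is tempting: $\K$ is $\sigma\delta$-simple, because any nonzero $\sigma\delta$-ideal of $\K$ intersects the $\sigma\delta$-simple ring $R$ in a nonzero $\sigma\delta$-ideal (clearing denominators). However, this only delivers disjointness of $\K$ and $\hat{\K}^{\sigma\delta}$ over $\K^{\sigma\delta}$, which is strictly weaker than needed because $\K^{\sigma\delta} \subsetneq \K^\sigma$ and $\hat{\K}^{\sigma\delta} \subsetneq K$ in general. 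To sharpen this, my plan is to invoke the decomposition $\hat{R} = \bigoplus_{i=0}^{s-1} \hat{R}e_i$ from Lemma~6.8 of~\cite{hardouin2008differential} into $\sigma^s\delta$-simple integral domains cyclically permuted by $\sigma$, apply Lemma~\ref{lm:linearlydisjoint} at the component level (where the $\sigma^s$-constants align with $\sigma$-constants on the component), and then reassemble over the $\sigma$-orbit of idempotents. This final reassembly step, transferring disjointness over $\sigma^s$-constants of a component to $\sigma$-constants of the whole, is the most delicate point of the argument.
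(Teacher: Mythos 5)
Your reduction is logically sound as far as it goes: the easy inclusion via $\hat{\K}^\sigma=K$ is fine, and \emph{if} $K$ and $\K$ were known to be linearly disjoint over $\K^\sigma$ inside $\hat{\K}$, the basis-decomposition argument would finish the hard inclusion. But that linear disjointness claim is where the entire content of the lemma lives --- it already contains $\K\cap K=\K^\sigma$ as the special case of the relation $a\cdot 1-1\cdot a=0$ for $a\in(\K\cap K)\setminus\K^\sigma$ --- and you do not prove it. As you yourself note, Lemma~\ref{lm:linearlydisjoint} does not apply with $\Sigma=\{\sigma\}$, $\Delta=\emptyset$ because $\K$ is only $\sigma\delta$-simple, not $\sigma$-simple; the standard ``apply $\sigma-\id$ to a minimal relation'' trick also stalls because a nonzero coefficient in $\K$ may be a zero divisor and the $\sigma$-ideal it generates need not be all of $\K$ (and applying $\delta$ destroys the relation, since the $e_\lambda\in K$ are not $\delta$-constants). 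Your proposed repair via the idempotent decomposition of Lemma~6.8 of \cite{hardouin2008differential} is only a plan: the numbers of idempotents for $R$ and for $\hat{R}$ need not agree, and the passage from disjointness over $\sigma^s$-constants on a component back to disjointness over $\sigma$-constants of the whole ring is exactly the step you flag as ``most delicate'' and leave unargued. So the proof is not complete.

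For comparison, the paper sidesteps linear disjointness over $\K^\sigma$ entirely. Writing $f=p/q$ with $p,q\in K[x]$ coprime and $q$ monic, it first checks elementarily (from coprimality and degree counts) that $1,x,\dots,x^s,fx^{d-1},\dots,f$ are linearly independent over $K=\hat{\K}^\sigma$ --- independence over the \emph{constants of the $\sigma$-simple ring} $\hat{R}$ of Corollary~\ref{cor:sigmapv}, which is exactly the hypothesis under which the Casoratian criterion of Proposition~\ref{prop:lineardependence} applies. This produces a matrix $M$ of $\sigma$-shifts with $\det(M)\neq 0$ and entries in $\K\cap K(x)$; Cramer's rule then shows $\det(M)a_i\in\K$ for every coefficient $a_i$ of $p$ and $q$, and since $\det(M)$ is invertible in $K(x)\subseteq\hat{\K}$, hence a unit of $\K$, one gets $a_i\in\K$ and therefore $a_i\in\K\cap\hat{\K}^\sigma\cap\K=\K^\sigma$. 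If you want to keep your formulation, you would need to supply a genuine proof of the linear disjointness statement; as written, you have replaced the lemma by a strictly stronger unproved claim.
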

\begin{proof}
It suffices to show that $\K\cap K(x)\subset \K^\sigma(x)$. Suppose $f=p/q\in \K\cap K(x)$ where $p,q\in K[x], \gcd(p,q)=1$ and $q$ is monic. Write $q=x^d+\sum_{i=0}^{d-1} a_i x^i$ and $p=\sum_{j=0}^s a_{j+d} x^i$, where $a_i \in K$. Then we only need to show that $a_i \in \K^\sigma$. First of all, we have that $1,x,\dots,x^s, fx^{d-1},\dots,f$ are linearly independent over $K$. Otherwise, there are $\alpha_0, \dots,\alpha_s, \beta_0, \dots,\beta_{d-1}\in K$, not all zero, such that $\sum_{i=0}^s \alpha_i x^i -f \sum_{j=0}^{d-1}\beta_j x^j=0$. Replacing $f$ with $p/q$ yields that $q(\sum_{i=0}^s \alpha_i x^i)=p(\sum_{j=0}^{d-1}\beta_j x^j)=0$. Since $\gcd(p,q)=1$, $q$ divides $\sum_{j=0}^{d-1}\beta_j x^j$. So $\sum_{j=0}^{d-1}\beta_j x^j=0$ and then $\sum_{i=0}^s \alpha_i x^i=0$. In other words, all $\alpha_i$ and $\beta_j$ are zero, a contradiction. Secondly, let $M$ be the matrix formed by $(x+l)^s,\dots,1, \sigma^l(fx)^{d-1}, \dots, \sigma^l(f)$ with $l=0,1,\dots,s+d$. Then $\det(M)\neq 0$. Applying $\sigma^l$ to $\sum_{i=0}^s a_{d+i} x^i -\sum_{j=0}^{d-1} a_j (fx^j)=x^df, l=0,\dots,s+d$ yields that 
$$
   M(a_{s+d},\dots,a_0)^t=(x^df, \dots,\sigma^{s+d}(x^df))^t.
$$
Multiplying the adjoint matrix of $M$ on both sides, we have that $\det(M)a_i\in \K$ for all $i$. Note that both $\K$ and $K(x)$ are in $\hat{\K}$. As $\det(M)$ is invertible in $K(x)$, it is invertible in $\hat{\K}$ and so it is not a zero divisor in $\K$. This implies that $a_i\in \K$ and then $a_i\in \K^\sigma$ as desired.
\end{proof}
\begin{prop}
\label{prop:strongsigmapv}
Let the notation be as in Notation~\ref{not:sufficientcondition}. Then
$\breve{R}$ is a $\sigma$-Picard-Vessiot ring over $\K^\sigma(x)$ for $\sigma(Y)=AY$ and $\breve{R}^\sigma=\K^\sigma$.
\end{prop}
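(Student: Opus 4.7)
The proposition has three parts: $\breve R$ has the required generators, $\breve R^\sigma = \K^\sigma$, and $\breve R$ is $\sigma$-simple. The first is immediate, since by definition of the composite $\breve R = \K^\sigma(x)[\calX, 1/\det(\calX)]$ and $\sigma(\calX) = A\calX$. The second follows from $\breve R \subseteq \K$ (giving $\breve R^\sigma \subseteq \K^\sigma$) together with the $\sigma$-fixed inclusion $\K^\sigma \subseteq \K^\sigma(x) \subseteq \breve R$.

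The substantive content is $\sigma$-simplicity, and my plan is to realise $\breve R$ as a subring of $\hat R$ and descend $\sigma$-simplicity from $\hat R$ via faithful flatness. First I would embed $\breve R$ in $\hat R$: the inclusion $R \hookrightarrow \hat R$ extends to $\K \hookrightarrow \hat{\K}$, hence $\K^\sigma \hookrightarrow \hat{\K}^\sigma$. A product-decomposition argument (splitting $\hat R$ into cyclically permuted integral domains as in Lemma~6.8 of~\cite{hardouin2008differential} and observing that the $\sigma$-constants of the total ring of fractions of a $\sigma$-simple ring agree with those of the ring itself) yields $\hat{\K}^\sigma = \hat R^\sigma = K$, using Corollary~\ref{cor:sigmapv}. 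Therefore $\K^\sigma \subseteq K$ and
\[
   \breve R = \K^\sigma(x)[\calX, 1/\det(\calX)] \subseteq K(x)[\calX, 1/\det(\calX)] = \hat R.
\]

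Next I would show that the multiplication map $\Psi \colon K(x) \otimes_{\K^\sigma(x)} \breve R \to \hat R$, $c \otimes r \mapsto cr$, is an isomorphism of $K(x)$-algebras. Surjectivity is obvious; for injectivity, one expands an alleged kernel element in a $\K^\sigma(x)$-basis of $K(x)$ and uses a linear-disjointness strengthening of Lemma~\ref{lm:intersection} to kill any relation $\sum_\alpha e_\alpha r_\alpha = 0$ with $r_\alpha \in \breve R$. Granting this, $\hat R$ is faithfully flat over $\breve R$ because $\K^\sigma(x) \subseteq K(x)$ is a field extension and hence faithfully flat. Then any proper $\sigma$-ideal $I \subseteq \breve R$ produces a proper $\sigma$-ideal $I\hat R$ of $\hat R$, nonzero whenever $I \neq 0$, contradicting the $\sigma$-simplicity of $\hat R$ from Corollary~\ref{cor:sigmapv}.

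The anticipated obstacle is the injectivity of $\Psi$, i.e.\ upgrading Lemma~\ref{lm:intersection} into a genuine linear-disjointness statement for $\breve R$ and $K(x)$ over $\K^\sigma(x)$ inside $\hat{\K}$. If this proves awkward, there is an equivalent direct descent argument: from $\sigma$-simplicity of $\hat R$ one gets $1 = \sum_j a_j b_j$ with $a_j \in I$ and $b_j \in \hat R$; expanding each $b_j$ in a $\K^\sigma(x)$-basis $\{e_\alpha\}$ of $K(x)$ with $e_{\alpha_0} = 1$ yields $1 = \sum_\alpha e_\alpha c_\alpha$ with $c_\alpha \in I$, and the same linear-disjointness argument isolates the coefficient of $e_{\alpha_0}$ to conclude $1 \in I$.
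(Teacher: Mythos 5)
Your reduction to $\sigma$-simplicity, the verification that $\breve{R}^\sigma=\K^\sigma$, and the preliminary embedding $\breve{R}\subseteq\hat{R}$ (via $\K^\sigma\subseteq\hat{\K}^\sigma=\hat{R}^\sigma=K$, where the middle equality follows from $\sigma$-simplicity of $\hat{R}$) are all sound. The gap is exactly the step you flag as the obstacle: the injectivity of $\Psi\colon K(x)\otimes_{\K^\sigma(x)}\breve{R}\to\hat{R}$, i.e.\ the linear disjointness of $\breve{R}$ and $K(x)$ over $\K^\sigma(x)$. This is not a routine ``strengthening'' of Lemma~\ref{lm:intersection}: the intersection statement $\K\cap K(x)=\K^\sigma(x)$ is strictly weaker than linear disjointness, and the natural tool for producing linear disjointness --- Lemma~\ref{lm:linearlydisjoint} with $S=\breve{R}$, $T=\hat{\K}$, $\Sigma=\{\sigma\}$, $\Delta=\emptyset$, which would give $\breve{R}$ linearly disjoint from $\hat{\K}^\sigma=K$ over $\breve{R}^\sigma=\K^\sigma$ --- requires $\breve{R}$ to be $\sigma$-simple, which is precisely what you are trying to prove. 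You cannot substitute $\K$ for $\breve{R}$ either, since $\K$ is only known to be $\sigma\delta$-simple, not $\sigma$-simple. Your fallback ``direct descent'' argument needs the same uniqueness of coefficients in an expansion $\sum_\alpha e_\alpha c_\alpha$ with $c_\alpha\in\breve{R}$, so it does not avoid the issue. As written, the argument is circular at its central step; the disjointness you want is an output of the proposition (via Lemma~\ref{lm:linearlydisjoint}), not an available input.

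For comparison, the paper does not descend simplicity from $\hat{R}$ by flatness. It first notes that $\breve{R}$ is $\sigma\delta$-simple (a nonzero $\sigma\delta$-ideal meets $R$ nontrivially), and then, for a nonzero $a\in\breve{R}$, forms a Casoratian-type determinant $d=\det((\sigma^{s_i}(a_j)))$ with $s_1=0$, built from a $K$-basis $\{a_1=a,\dots,a_m\}$ of the $\sdgal(\hat{R}/K(x))$-orbit of $a$ (which lies in $\breve{R}$ by invariance). Nonvanishing of $d$ comes from Proposition~\ref{prop:lineardependence} applied in the $\sigma$-simple ring $\hat{R}$; invertibility of $d$ in $\hat{R}$ comes from Corollary~\ref{cor:groupinvariantideal} since $(d)$ is Galois-invariant; the Galois correspondence then gives $\sigma(d)d^{-1},\delta(d)d^{-1}\in K(x)$, and Lemma~\ref{lm:intersection} places them in $\K\cap K(x)=\K^\sigma(x)$, so $(d)$ is a $\sigma\delta$-ideal of $\breve{R}$ and $d$ is a unit there. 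Expanding $d$ along the column of the $\sigma^{s_i}(a)$'s shows $d$ lies in the $\sigma$-ideal generated by $a$, which is therefore all of $\breve{R}$. If you want to rescue the flat-descent scheme, you would essentially have to reprove this determinant argument in order to obtain the linear disjointness first.
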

\begin{proof}
Note that $\breve{R}=\K^\sigma(x)[\calX,1/\det(\calX)]$. It suffices to show that $\breve{R}$ is $\sigma$-simple.
We first prove that $\breve{R}$ is $\sigma\delta$-simple. Suppose that $I$ is a $\sigma\delta$-ideal of $\breve{R}$ and $I\neq (0)$. Then $I\cap R\neq (0)$ and it is a $\sigma\delta$-ideal of $R$. Since $R$ is $\sigma\delta$-simple, $1\in I\cap R\subset I$ and thus $I=\breve{R}$.

Now suppose $a\in \breve{R}\setminus\{0\}$. We shall show that the $\sigma$-ideal of $\breve{R}$ generated by $a$ is trivial. Let $\{a_1=a,\dots,a_m\}$ be a basis of $K$-vector space spanned by $\{g(a) \mid \forall\,\,g\in \sigma\delta\mbox{-}\Gal(\hat{R}/K(x))\}$. Note that $\breve{R}$ is invariant under the action of $\sigma\delta$-$\Gal(\hat{R}/K(x))$. All $a_i$ can be chosen to be in $\breve{R}$. By Corollary~\ref{cor:sigmapv}, $\hat{R}$ is $\sigma$-simple and $\hat{R}^\sigma=K$. By Proposition~\ref{prop:lineardependence} with $\Sigma=\{\sigma\}$ and $\Delta=\emptyset$, there are $s_1,\dots,s_m$ with $s_1=0$ such that $d=\det((\sigma^{s_i}(a_j))_{1\leq i,j\leq m})\neq 0$. For each $g\in \sigma\delta$-$\Gal(\hat{R}/K(x))$, one has that $g(d)=c_g d$ with $c_g\in K$. In other words, the ideal $(d)$ of $\hat{R}$ generated by $d$ is a $\sigma\delta$-$\Gal(\hat{R}/K(x))$-ideal. Since $\hat{R}$ is a $\sigma\delta$-Picard-Vessiot ring over $K(x)$ for (\ref{eqn:sigmadelta-eqn}), Corollary~\ref{cor:groupinvariantideal} implies that $d$ is invertible in $\hat{R}$. Now one has that both $\sigma(d)d^{-1}$ and $\delta(d)d^{-1}$ are invariant under the action of $\sigma\delta$-$\Gal(\hat{R}/K(x))$. The Galois correspondence implies that $\sigma(d)d^{-1}, \delta(d)d^{-1}\in K(x)$. Set $b_1=\sigma(d)d^{-1}, b_2=\delta(d)d^{-1}$. Since $d$ is not a zero divisor in $\hat{R}$ and $d\in \breve{R}$, $d$ is not a zero divisor in $\breve{R}$. Therefore $b_1, b_2\in \K$ because $\K$ is also the total ring of fractions of $\breve{R}$. This implies that $b_1,b_2\in \K\cap K(x)$. By Lemma~\ref{lm:intersection}, $\K\cap K(x)=\K^\sigma(x)$. Thus the ideal $(d)$ of $\breve{R}$ generated by $d$ is a $\sigma\delta$-ideal. As $\breve{R}$ is $\sigma\delta$-simple, $d$ is invertible in $\breve{R}$. Expanding $d$ by the first column, one sees that $d$ belongs to the $\sigma$-ideal of $\breve{R}$ generated by $a$ and so this ideal is trivial.
The second assertion is obvious.
\end{proof}


Since $\breve{R}^\sigma=\K^\sigma=(\K^\sigma(x))^\sigma$, by Proposition~\ref{prop:galoisgroups} or \cite{chatzidakis2007definitions}, $\sigma$-$\Gal(\breve{R}/\K^\sigma(x))$ can be identified with $\Hom_{\K^\sigma}((\breve{R}\otimes_{\K^\sigma(x)} \breve{R})^\sigma, \K^\sigma)$. As usual, for $H=\Hom_C(D,C)$ and a $C$-algebra $S$, denote by $H(S)$ the set of $S$-points of $H$, \ie $H(S)=\Hom_C(D,S)=\Hom_S(S\otimes_C D,S)$.
\begin{lem}
\label{lm:connections}
Let $H=\Hom_C((\breve{R}\otimes_{\K^\sigma(x)} \breve{R})^{\sigma\delta}, C)$. Then $\sgal(\breve{R}/\K^\sigma(x))$ can be identified with $H(\K^\sigma)$.
\end{lem}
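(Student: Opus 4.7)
The plan is to identify both $\sgal(\breve R/\K^\sigma(x))$ and $H(\K^\sigma)$ with $\Hom_C(B,\K^\sigma)$, where $B=(\breve R\otimes_{\K^\sigma(x)}\breve R)^{\sigma\delta}$, via the hom-tensor adjunction. Set $\calZ=\calX^{-1}\otimes\calX$ and $A=(\breve R\otimes_{\K^\sigma(x)}\breve R)^\sigma$. A direct computation using $\delta(\calX)=B\calX$ together with the fact that entries of $B$ lie in $\K^\sigma(x)$ and so pass freely across the tensor over $\K^\sigma(x)$ gives $\delta(\calZ)=0$, so $\calZ$ already lies in the $\sigma\delta$-invariant subring. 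Proposition~\ref{prop:strongsigmapv} together with Proposition~\ref{prop:galoisgroups} applied with $\Sigma=\{\sigma\}$, $\Delta=\emptyset$ identifies $\sgal(\breve R/\K^\sigma(x))$ with $\Hom_{\K^\sigma}(A,\K^\sigma)$, and $A=\K^\sigma[\calZ,1/\det(\calZ)]$.

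The crucial preliminary is to show $\K^{\sigma\delta}=C$. For $\alpha\in\K^{\sigma\delta}$, write $\alpha=p/q$ with $p,q\in R$ and form $J=\{r\in R:r\alpha\in R\}$, which is a nonzero ideal of $R$ containing $q$. For $r\in J$, the identities $\sigma(r\alpha)=\sigma(r)\alpha$ and $\delta(r\alpha)=\delta(r)\alpha$ show that $\sigma(r),\delta(r)\in J$, so $J$ is a nonzero $\sigma\delta$-ideal of $R$; $\sigma\delta$-simplicity of $R$ forces $J=R$, whence $\alpha\in R\cap\K^{\sigma\delta}=R^{\sigma\delta}=C$, the last equality being Proposition~6.14 of \cite{hardouin2008differential} applied to the $\sigma\delta$-Picard-Vessiot ring $R$ over $k_0(x)$ with $k_0(x)^{\sigma\delta}=k_0^\delta=C$ algebraically closed. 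In particular $\breve R^{\sigma\delta}=(\K^\sigma(x))^{\sigma\delta}=C$.

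Now Proposition~\ref{prop:galoisgroups} applied to $\breve R$ as a $\sigma\delta$-Picard-Vessiot ring over $\K^\sigma(x)$ yields $B=C[\calZ,1/\det(\calZ)]$ and, from the proof of that proposition, a $\sigma\delta$-isomorphism
\[\varphi:\breve R\otimes_{\K^\sigma(x)}\breve R\longrightarrow\breve R\otimes_C B.\]
Since $B$ is a Hopf algebra over the field $C$ and hence $C$-free, $\sigma$ acts on $\breve R\otimes_C B$ only through the first factor; picking a $C$-basis of $B$ then gives $(\breve R\otimes_C B)^\sigma=\breve R^\sigma\otimes_C B=\K^\sigma\otimes_C B$. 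Combining with $\varphi$ produces the $\K^\sigma$-algebra isomorphism $A\cong\K^\sigma\otimes_C B$. The hom-tensor adjunction then yields
\[\sgal(\breve R/\K^\sigma(x))=\Hom_{\K^\sigma}(A,\K^\sigma)=\Hom_{\K^\sigma}(\K^\sigma\otimes_C B,\K^\sigma)=\Hom_C(B,\K^\sigma)=H(\K^\sigma),\]
as required.

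The main obstacle is the identity $\K^{\sigma\delta}=C$; once this descent of constants is in place, the rest is a formal consequence of Proposition~\ref{prop:galoisgroups} applied in the two flavors $\Delta=\emptyset$ and $\Delta=\{\delta\}$, followed by adjunction.
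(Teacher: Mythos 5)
Your proof is correct, and the overall skeleton matches the paper's: identify $\sgal(\breve{R}/\K^\sigma(x))$ with $\Hom_{\K^\sigma}((\breve{R}\otimes_{\K^\sigma(x)}\breve{R})^\sigma,\K^\sigma)$ via Propositions~\ref{prop:strongsigmapv} and~\ref{prop:galoisgroups}, show that $(\breve{R}\otimes_{\K^\sigma(x)}\breve{R})^\sigma\cong\K^\sigma\otimes_C(\breve{R}\otimes_{\K^\sigma(x)}\breve{R})^{\sigma\delta}$, and finish by hom-tensor adjunction. Where you differ is in the mechanism for the middle isomorphism: the paper notes that $\K^\sigma$ is a $\delta$-simple field and invokes Lemma~\ref{lm:linearlydisjoint} to get linear disjointness of $\K^\sigma$ and $C[\calZ,1/\det(\calZ)]$ over $(\K^\sigma)^\delta=C$, whereas you apply the full torsor isomorphism of Proposition~\ref{prop:galoisgroups} to $\breve{R}$ viewed as a $\sigma\delta$-Picard-Vessiot ring over $\K^\sigma(x)$ and then take $\sigma$-invariants of $\breve{R}\otimes_C B$ using a $C$-basis of $B$. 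Your route costs you the extra verification that $\breve{R}^{\sigma\delta}=(\K^\sigma(x))^{\sigma\delta}=C$ (the hypothesis of Proposition~\ref{prop:galoisgroups}), which you supply with a clean denominator-ideal argument showing $\K^{\sigma\delta}=R^{\sigma\delta}=C$; this is actually a bonus, since the paper uses the equality $(\K^\sigma)^\delta=C$ without comment. Both arguments ultimately rest on simplicity of the relevant ring, and either is acceptable; the paper's linear-disjointness step is slightly more economical, while yours makes the constants computation explicit.
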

\begin{proof}
 By Proposition~\ref{prop:strongsigmapv}, one has that $\breve{R}^\sigma=\K^\sigma$. Since $\breve{R}$ is $\sigma$-simple, one sees that $(\breve{R}\otimes_{\K^\sigma(x)} \breve{R})^{\sigma}=\K^\sigma[\calZ,1/\det(\calZ)]$, where $\calZ=\calX^{-1}\otimes_{\K^\sigma(x)} \calX$. As the total ring of fractions of a $\sigma\delta$-simple ring, it is easy to see that $\K$ is $\sigma\delta$-simple. So $\K^\sigma$ is $\delta$-simple. By Lemma~\ref{lm:linearlydisjoint} with $S=\K^\sigma, T=\K^\sigma[\calZ,1/\det(\calZ)]$ and $\Sigma=\emptyset, \Delta=\{\delta\}$, $\K^\sigma$ and $C[\calZ,1/\det(\calZ)]$ are linearly disjoint over $(\K^{\sigma})^{\delta}=C$. Hence the natural homomorphism $\K^{\sigma}\otimes_C C[\calZ,1/\det(\calZ)]\rightarrow \K^\sigma[\calZ,1/\det(\calZ)], a\otimes b\mapsto ab$ is isomorphic. Since $(\breve{R}\otimes_{\K^\sigma(x)} \breve{R})^{\sigma\delta}=C[\calZ,1/\det(\calZ)]$, one sees that 
 \begin{align*}H(\K^\sigma)&=\Hom_{C}((\breve{R}\otimes_{\K^\sigma(x)} \breve{R})^{\sigma\delta}, \K^\sigma)=\Hom_{C}(C[\calZ,1/\det(\calZ)], \K^\sigma)\\
 &=\Hom_{\K^\sigma}(\K^\sigma\otimes_C C[\calZ,1/\det(\calZ)],\K^\sigma)\\
 &=\Hom_{\K^\sigma}(\K^\sigma[\calZ,1/\det(\calZ)],\K^\sigma)=\Hom_{\K^\sigma}((\breve{R}\otimes_{\K^\sigma(x)} \breve{R})^\sigma,\K^\sigma).
 \end{align*} The lemma then follows from Proposition~\ref{prop:galoisgroups}.
\end{proof}

\section{Main results}
\label{sec:mainresults}
In this section, we shall present the main results of this paper. We assume that 
\begin{itemize}
  \item $\calD$ is a simple $\delta$-domain that is finitely generated over $C=\calD^{\delta}$. For example, $\calD=C[t]$,
  \item $k_0$ is the field of fractions of $\calD$ (thus $k_0^\delta=\calD^\delta=C$),
  \item $R$ is a $\sigma\delta$-Picard-Vessiot ring over $k_0(x)$ for (\ref{eqn:sigmadelta-eqn}),
  \item $\calX$ is a fundamental solution matrix of (\ref{eqn:sigmadelta-eqn}) in $\GL_n(R)$,
  \item $\frakh\in \calD[x]\setminus \{0\}$ satisfies that all entries of $A$ and $B$ are in $\calD[x,1/\frakh]$,
  \item
$D= \calD[x][\{\frac{1}{\sigma^i(\frakh)} \mid \forall\,\,i\in \mathbb{Z}\}]$,
\item
$\R=D[\calX,1/\det(\calX)]$.
  \end{itemize}
It is clear that $D$ is a $\sigma\delta$-ring. Furthermore, we shall show that $D$ is actually a simple $\sigma\delta$-ring and so the results presented in Section~\ref{sec:specializations} can be applied.
\begin{lem}
\label{lm:sigmadeltasimple}
$D$ is a simple $\sigma\delta$-ring.
\end{lem}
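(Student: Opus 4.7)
The plan is to show that any nonzero $\sigma\delta$-ideal $I$ of $D$ contains $1$, by descending to $\calD[x]$ and then further to $\calD$, where we can invoke the hypothesized $\delta$-simplicity.

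First I would clear denominators. Since $D$ is obtained from $\calD[x]$ by inverting the elements $\sigma^i(\frakh)$, any nonzero $a\in I$, after multiplication by a suitable product of these (which are units in $D$), lies in $\calD[x]$. Hence $J := I\cap \calD[x]$ is a nonzero ideal of $\calD[x]$. Both $\sigma$ and $\sigma^{-1}$ preserve $\calD[x]$: the action on $k_0\supseteq \calD$ is trivial (as $k_0$ was originally given only as a $\delta$-field and $\sigma$ is introduced on $k_0(x)$ by $\sigma(x)=x+1$), and $\sigma(x)=x+1,\ \sigma^{-1}(x)=x-1$ both lie in $\calD[x]$. Since $I$ is $\sigma$-stable, so is $J$.

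Next I would perform a minimal-degree argument. Choose $f\in J\setminus\{0\}$ with minimal $x$-degree $d$, and write $f=a_d x^d+\cdots+a_0$ with $a_d\neq 0$. Then $\sigma(f)-f\in J$ and
\[
\sigma(f)-f \;=\; a_d\bigl((x+1)^d-x^d\bigr)+\text{lower order terms} \;=\; d\,a_d\,x^{d-1}+\cdots.
\]
If $d>0$, the leading coefficient $d\,a_d$ is nonzero in characteristic zero (and $\calD$ is a domain), so $\sigma(f)-f$ is a nonzero element of $J$ of degree $d-1$, contradicting the minimality of $d$. Therefore $d=0$ and $f\in I\cap \calD$, so $I\cap\calD\neq 0$.

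Finally, $I\cap\calD$ is a $\delta$-ideal of $\calD$: it is an ideal, and $\delta(I\cap\calD)\subseteq \delta(I)\cap\delta(\calD)\subseteq I\cap\calD$ since $I$ is a $\delta$-ideal of $D$ and $\calD$ is a $\delta$-ring. By the hypothesized $\delta$-simplicity of $\calD$, we conclude $I\cap\calD=\calD$, so $1\in I$ and $I=D$. There is no real obstacle here; the argument is the standard combination of clearing denominators, a shift-operator degree reduction, and the assumed simplicity of $\calD$.
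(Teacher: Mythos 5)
Your proof is correct. The overall strategy is the same as the paper's: clear the denominators $\sigma^i(\frakh)$ to get a nonzero element of $I\cap\calD[x]$, eliminate $x$ to produce a nonzero element of $I\cap\calD$, and then invoke the $\delta$-simplicity of $\calD$. The only difference is the device used for the elimination step. The paper picks $s$ so large that $p$ and $\sigma^s(p)$ have no common roots in $x$ and takes a resultant-type combination $ap+b\sigma^s(p)\in(\calD\cap I)\setminus\{0\}$, whereas you run a degree descent with the difference operator $\sigma-\mathrm{id}$: if $f\in I\cap\calD[x]$ has minimal positive degree $d$ and leading coefficient $a_d$, then $\sigma(f)-f$ is a nonzero element of degree $d-1$ with leading coefficient $d\,a_d\neq 0$ (using characteristic zero, that $\calD$ is a domain, and that $\sigma$ fixes $\calD$ pointwise and sends $x$ to $x+1$). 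Your version is slightly more elementary and self-contained, since it avoids appealing to the existence of an $s$ with disjoint root sets and to resultant theory; the paper's version gets the element of $\calD\cap I$ in one step rather than by induction on degree. Both are sound, and both rest on the same two hypotheses: that $\sigma$ acts as the shift on $\calD[x]$ and that $\calD$ is $\delta$-simple.
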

\begin{proof}
 Let $I$ be a nonzero $\sigma\delta$-ideal of $D$. Then there is a nonzero $p\in I\cap \calD[x]$. Let $s$ be a positive integer such that $p$ and $\sigma^s(p)$ viewed as polynomials in $x$ have no common roots. Then there are $a,b\in \calD[x]$ such that $ap+b\sigma^s(p)\in (\calD\cap I)\setminus \{0\}$. Since $\calD$ is $\delta$-simple, $1\in I$.
\end{proof}

\subsection{Galois groups of the specializations}
\label{subsec:algsubgroups}
In this subsection, we shall show that the Galois groups of the specializations of the linear difference equation and the linear differential equation in (\ref{eqn:sigmadelta-eqn}) are algebraic subgroups of $G$.

 Let $c_1\in \Hom_C(\calD, C)$. Then $c_1$ lifts to a unique element in $\Hom_{C[x]}(D,C(x))$ whose restrict on $\calD$ is equal to $c_1$. As before, we still use $c_1$ to denote its lifting. Suppose that $c_1(\frakh)\neq 0$. As $c_1$ is a $\sigma$-homomorphism, $C(x)\otimes_D \R$ is a $\sigma$-ring if it is not the zero ring. Suppose that $C(x)\otimes_D \R$ is not the zero ring. Let $\frakm$ be a maximal $\sigma$-ideal of $C(x)\otimes_D \R$. Further assume that $\det(A^{c_1})\neq 0$, where $(\cdot)^{c_1}$ denotes the application of $c_1$ to the entries of the corresponding matrix. Then $(C(x)\otimes_D \R)/\frakm$ is a $\sigma$-Picard-Vessiot ring over $C(x)$ for the following system
\begin{equation*}
       \sigma(Y)=A^{c_1} Y .
\end{equation*}
\begin{notation}
\label{not:sigma}
Set $\calS_{\sigma,c_1}=(C(x)\otimes_D \R)/\frakm$.
\end{notation}

Due to Proposition~\ref{prop:constantideal} with $F=C(x), \tilde{\Sigma}=\{\sigma\}$ and $\tilde{\Delta}=\emptyset$, one has that
\begin{align*}
  \stab(\frakm)&=\{g\in G \mid g(P)=0\,\,\forall\,P\in \Phi_{\frakm,\frakm}\}=\Hom_C(C[G]/\Phi_{\frakm,\frakm},C).
\end{align*}
where $\Phi_{\frakm,\frakm}$ is given as in Proposition~\ref{prop:constantideal}. As $\stab(\frakm)$ is an algebraic subgroup of $G$ by Proposition~\ref{prop:constantideal}, $C[G]/\Phi_{\frakm,\frakm}$ is a Hopf algebra. 
Using Proposition~\ref{prop:galoisgroups}, we immediately have the following theorem.
\begin{thm}
\label{thm:sigmasubgroups}
Let $c_1\in \Hom_C(\calD,C)$ be such that $c_1(\frakh)\neq 0$, $\det(A^{c_1})\neq 0$ and $C(x)\otimes_D \R$ is not the zero ring. Suppose that $\frakm$ is a maximal $\sigma$-ideal of $C(x)\otimes_D \R$. Then
$\stab(\frakm)$ is the $\sigma$-Galois group of $\calS_{\sigma,c_1}$ over $C(x)$.
\end{thm}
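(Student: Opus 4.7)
The plan is to show that, as algebraic subgroups of $\GL_n(C)$, the group $\stab(\frakm)$ coincides with the $\sigma$-Galois group of $\calS_{\sigma,c_1}$ over $C(x)$. By Proposition~\ref{prop:constantideal}(2) we already know $\stab(\frakm)=\Hom_C(C[G]/\Phi_{\frakm,\frakm},C)$, and by Proposition~\ref{prop:galoisgroups} applied to the $\sigma$-Picard-Vessiot ring $\calS_{\sigma,c_1}$ over $C(x)$ (whose $\sigma$-constants $C(x)^\sigma=C$ are algebraically closed) the $\sigma$-Galois group equals $\Hom_C((\calS_{\sigma,c_1}\otimes_{C(x)}\calS_{\sigma,c_1})^\sigma,C)$. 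Hence it suffices to produce a natural Hopf algebra isomorphism
\[
(\calS_{\sigma,c_1}\otimes_{C(x)}\calS_{\sigma,c_1})^\sigma \;\cong\; C[G]/\Phi_{\frakm,\frakm}.
\]

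First I would invoke Proposition~\ref{prop:constantideal}(1) with $\tilde\Sigma=\{\sigma\}$, $\tilde\Delta=\emptyset$, $F=C(x)$, $T=C(x)\otimes_D\R$ and $I=\frakm$; the hypotheses hold because $D$ is $\sigma\delta$-simple by Lemma~\ref{lm:sigmadeltasimple}, the extension of $c_1$ sending $x$ to $x$ is a $C$-$\sigma$-homomorphism (since $\sigma$ acts trivially on $\calD$), and $F^\sigma=C$ is algebraically closed. This produces a $\calS_{\sigma,c_1}$-$\sigma$-isomorphism
\[
\bar\varphi_T\colon \calS_{\sigma,c_1}\otimes_{C(x)}\calS_{\sigma,c_1}\longrightarrow \calS_{\sigma,c_1}\otimes_C C[G]/\Phi_{\frakm,\frakm},
\]
where on the right the $\sigma$-action is trivial on the second tensor factor, because $C[G]=(R\otimes_{k_0(x)}R)^{\sigma\delta}$ consists of $\sigma\delta$-constants and $\Phi_{\frakm,\frakm}$ is stable under $\sigma\delta$.

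Next I would take $\sigma$-invariants of both sides. Since $\calS_{\sigma,c_1}$ is a $\sigma$-Picard-Vessiot ring over $C(x)$ with algebraically closed field of constants, Proposition 6.14 of \cite{hardouin2008differential} (with $\Pi=\emptyset$) gives $\calS_{\sigma,c_1}^\sigma=C$. Choosing a $C$-basis $\{b_i\}$ of $C[G]/\Phi_{\frakm,\frakm}$ and writing a $\sigma$-invariant element of $\calS_{\sigma,c_1}\otimes_C C[G]/\Phi_{\frakm,\frakm}$ as $\sum a_i\otimes b_i$, the linear independence of the $1\otimes b_i$ over $\calS_{\sigma,c_1}$ (valid because $C[G]/\Phi_{\frakm,\frakm}$ is free as a $C$-vector space) forces $\sigma(a_i)=a_i$ and therefore $a_i\in C$. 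Hence $(\calS_{\sigma,c_1}\otimes_C C[G]/\Phi_{\frakm,\frakm})^\sigma = 1\otimes C[G]/\Phi_{\frakm,\frakm}$, and combining with $\bar\varphi_T$ yields the desired $C$-algebra isomorphism.

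The main obstacle will be verifying that this isomorphism respects the Hopf structures, so that the conclusion is an equality of algebraic groups rather than a mere bijection on $C$-points. The Hopf structure on $C[G]/\Phi_{\frakm,\frakm}$ is the quotient one inherited from $C[G]$ (and $\Phi_{\frakm,\frakm}$ is a Hopf ideal because $\stab(\frakm)$ is already a subgroup by Proposition~\ref{prop:constantideal}(2)), while the Hopf structure on $(\calS_{\sigma,c_1}\otimes_{C(x)}\calS_{\sigma,c_1})^\sigma$ is the one furnished by Proposition~\ref{prop:galoisgroups}. The compatibility should follow by tracing the construction of $\bar\varphi_T$ as a specialization of the isomorphism $\varphi$ of Proposition~\ref{prop:extensiontorsors} and noting that the comultiplication $\calZ\mapsto\calZ\otimes\calZ$ is preserved under this specialization. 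Taking $C$-points then identifies $\stab(\frakm)$ with $\sgal(\calS_{\sigma,c_1}/C(x))$ as algebraic subgroups of $\GL_n(C)$.
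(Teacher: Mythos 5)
Your proposal is correct and follows essentially the same route as the paper's proof: identify $\sgal(\calS_{\sigma,c_1}/C(x))$ with $\Hom_C((\calS_{\sigma,c_1}\otimes_{C(x)}\calS_{\sigma,c_1})^\sigma,C)$ via Proposition~\ref{prop:galoisgroups}, and identify $(\calS_{\sigma,c_1}\otimes_{C(x)}\calS_{\sigma,c_1})^\sigma$ with $C[G]/\Phi_{\frakm,\frakm}$ via Proposition~\ref{prop:constantideal} with $I=\frakm$, checking Hopf-algebra compatibility. You merely fill in details the paper leaves implicit (taking $\sigma$-invariants of $\bar\varphi_T$ and using $\calS_{\sigma,c_1}^\sigma=C$), which is a faithful expansion of the same argument.
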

\begin{proof}
Due to Proposition~\ref{prop:constantideal} with $F=C(x), \tilde{\Sigma}=\{\sigma\}$, $\tilde{\Delta}=\emptyset$ and $I=\frakm$, one sees that $(\calS_{\sigma,c_1}\otimes_{C(x)} \calS_{\sigma,c_1})^\sigma$ is isomorphic to $C[G]/\Phi_{\frakm,\frakm}$. Furthermore, one can verify that they are isomorphic as Hopf algebras. By Proposition~\ref{prop:galoisgroups}, $\sgal(\calS_{\sigma,c_1}/C(x))$ can be identified with $\Hom_C(\calS_{\sigma,c_1}\otimes_{C(x)} \calS_{\sigma,c_1})^\sigma, C)$ and thus with 
$ \Hom_C(C[G]/\Phi_{\frakm,\frakm},C)=\stab(\frakm). $
\end{proof}
\begin{rem}
\label{rem:sigmagroups}
Note that different choices of maximal $\sigma$-ideals $\frakm$ may lead to different algebraic groups $stab(\frakm)$ of $G$. Corollary~\ref{cor:maximaltomaximal} implies that these $\stab(\frakm)$ are conjugate by elements of $G$.
\end{rem}

Similarly, let $c_2\in \Hom_C(C[x],C)$ be such that $\frakh(c_2(x)+i)\neq 0$ for all $i\in \Z$. We have that $c_2$ lifts to a unique $\tilde{c}_2\in \Hom_{\calD}(D,k_0)$ such that $\tilde{c}_2|_{C[x]}=c_2$. Again, for the sake of notation, we still use $c_2$ to denote $\tilde{c}_2$. Since $c_2(a)=a$ for any $a\in \calD$, one sees that $c_2$ is a $\delta$-homomorphism. Then $k_0\otimes_D \R$ is a $\delta$-ring if it is not the zero ring. Suppose that $k_0\otimes_D \R$ is not the zero ring. Let $\frakn$ be a maximal $\delta$-ideal of $k_0\otimes_D \R$. Then $(k_0 \otimes_D \R)/\frakn$ is a $\delta$-Picard-Vessiot ring over $k_0$ for the following system
\begin{equation*}
\label{eqn:specializationofdifferencepart}
       \delta(Y)=B^{c_2} Y.
\end{equation*}
\begin{notation}
\label{not:delta}
Set $\calS_{\delta,c_2}=(k_0\otimes_D \R)/\frakn$.
\end{notation}
\begin{thm}
\label{thm:deltasubgroups}
Let $c_2\in \Hom_C(C[x],C)$ be such that $\frakh(c_2(x)+i)\neq 0$ for any $i\in \Z$ and $k_0\otimes_D \R$ is not the zero ring. Suppose that $\frakn$ is a maximal $\delta$-ideal of $k_0\otimes_D \R$. Then
$\stab(\frakn)$ is the $\delta$-Galois group of $\calS_{\delta,c_2}$ over $k_0$.
\end{thm}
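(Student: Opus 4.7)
The plan is to mimic verbatim the proof of Theorem~\ref{thm:sigmasubgroups}, swapping the roles of $\sigma$ and $\delta$. Specifically, I would apply Proposition~\ref{prop:constantideal} with $F = k_0$, $\tilde{\Sigma} = \emptyset$, $\tilde{\Delta} = \{\delta\}$, $T = k_0 \otimes_D \R$, and $I = \frakn$. The outcome should be an ideal $\Phi_{\frakn,\frakn} \subseteq C[G]$ together with a $\calS_{\delta,c_2}$-$\delta$-isomorphism
\[
\bar{\varphi}_T:\calS_{\delta,c_2}\otimes_{k_0}\calS_{\delta,c_2}\;\xrightarrow{\sim}\;\calS_{\delta,c_2}\otimes_{C}C[G]/\Phi_{\frakn,\frakn},
\]
and the identification $\stab(\frakn) = \Hom_C(C[G]/\Phi_{\frakn,\frakn},C)$ from part~2 of Proposition~\ref{prop:constantideal}.

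To apply the proposition I must first check its hypotheses. Since $\calD$ is $\delta$-simple with $\calD^{\delta} = C$, a routine argument (any nonzero $\delta$-constant of $k_0$ generates a nonzero $\delta$-ideal of $\calD$, which must be trivial) gives $k_0^{\delta} = C$, which is algebraically closed by assumption. The condition $\frakh(c_2(x)+i)\neq 0$ for all $i\in\Z$ guarantees that $c_2$ extends (uniquely) to $D$, and since $c_2$ restricts to the identity on $\calD$, it commutes with $\delta$; hence it is a $C$-$\delta$-homomorphism. By hypothesis $T$ is nonzero, and $\frakn$ is maximal $\delta$, so $\calS_{\delta,c_2}$ is a simple $\delta$-ring. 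Thus Proposition~\ref{prop:constantideal} applies.

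Next, I would take $\delta$-constants on both sides of $\bar{\varphi}_T$. By Proposition~6.14 of \cite{hardouin2008differential} (with $\Pi=\emptyset$) one has $\calS_{\delta,c_2}^{\delta} = C$; then Lemma~\ref{lm:linearlydisjoint} forces
\[
\bigl(\calS_{\delta,c_2}\otimes_C C[G]/\Phi_{\frakn,\frakn}\bigr)^{\delta} \;=\; C[G]/\Phi_{\frakn,\frakn},
\]
so $\bar{\varphi}_T$ restricts to an isomorphism of $C$-algebras $(\calS_{\delta,c_2}\otimes_{k_0}\calS_{\delta,c_2})^{\delta}\cong C[G]/\Phi_{\frakn,\frakn}$. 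Combined with Proposition~\ref{prop:galoisgroups} applied to $\calS_{\delta,c_2}/k_0$, this yields
\[
\dgal(\calS_{\delta,c_2}/k_0) \;=\; \Hom_C\!\bigl((\calS_{\delta,c_2}\otimes_{k_0}\calS_{\delta,c_2})^{\delta},C\bigr) \;\cong\; \Hom_C\!\bigl(C[G]/\Phi_{\frakn,\frakn},C\bigr) \;=\; \stab(\frakn).
\]

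The main obstacle I anticipate is checking that this isomorphism is a Hopf algebra isomorphism, not merely of $C$-algebras, since $\stab(\frakn)$ being an algebraic \emph{subgroup} of $G$ (rather than just a subvariety) is what makes $C[G]/\Phi_{\frakn,\frakn}$ into a Hopf algebra with structure inherited from $C[G]$. One must trace the explicit formulas for the comultiplication and counit in Proposition~\ref{prop:galoisgroups} through $\bar{\varphi}_T$ and verify they match the ones on $C[G]/\Phi_{\frakn,\frakn}$; this is the same diagram chase that is implicit in the proof of Theorem~\ref{thm:sigmasubgroups}, where the author says ``furthermore, one can verify that they are isomorphic as Hopf algebras''. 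In our case the verification is identical in form, with $\sigma$ replaced by $\delta$ throughout, so no essentially new difficulty arises.
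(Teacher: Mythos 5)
Your proof is correct and is essentially the argument the paper intends: the paper states Theorem~\ref{thm:deltasubgroups} without proof, leaving it as the $\delta$-analogue of the proof given for Theorem~\ref{thm:sigmasubgroups}, and your application of Proposition~\ref{prop:constantideal} with $F=k_0$, $\tilde{\Sigma}=\emptyset$, $\tilde{\Delta}=\{\delta\}$, $I=\frakn$, followed by taking $\delta$-constants and invoking Proposition~\ref{prop:galoisgroups}, reproduces that argument faithfully. Your explicit verification of the hypotheses ($k_0^\delta=C$, $c_2$ being a $C$-$\delta$-homomorphism) and your remark on the Hopf algebra identification are exactly the points the paper's $\sigma$-proof glosses over.
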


Following Example~\ref{exam:sigmadeltagroups}, let us compute $\stab(\frakm)$ and $\stab(\frakn)$ in the following two examples respectively. Note that in Section 6.1 of \cite{Singer:algebraicandalgorithmic}, there is a simpler method to compute the $\sigma$-Galois group of $\sigma(Y)=A(m,c_1)Y$ over $C(m)$, \ie $\stab(\frakm)$. Here, to be consistent with the general case, we shall first compute $\frakm$ and then $\stab(\frakm)$.  
\begin{exam}
\label{exam:sigmagroups}
From Example~\ref{exam:sigmadeltagroups},
we have that 
$$
R=C(m,t)[X,1/\det(X)]/I
$$ 
where $I=\langle f_1,f_2,f_2\rangle$ with $f_i$ is given in (\ref{eqn:generators}). Set $\calD=C[t], \frakh=t^2-1, D=C[m,t,\frac{1}{\frakh}]$ and $\R=D[\calX,1/\det(\calX)]$ where $\calX=X \mod I$. Then one has that
\begin{equation}
\label{eqn:relations}
\calX_{11}\calX_{12}=1, \,\,\calX_{21}\calX_{22}=1,\,\,(\calX_{11}\calX_{22})^2-2t\calX_{11}\calX_{22}+1=0.
\end{equation}
Suppose that $c\in C=\Hom_C(C[t],t)$ such that $\frakh^c=c^2-1\neq 0$ and set $\alpha=c+\sqrt{c^2-1}$. Then $\alpha^2\neq 1$. As $P=\begin{pmatrix} 1 & 1 \\ \alpha^{-1} & \alpha \end{pmatrix}$ is a zero of $I^c$ in $\GL_2(C)$, $\langle I^c\rangle\neq C(m)[X,1/\det(X)]$. So $T=C(m)\otimes_D \R$ is not the zero ring. In the following, we separate two cases to compute a maximal $\sigma$-ideal $\frakm$ and $\stab(\frakm)$. For the sake of notation, we still use $\calX_{ij}$ to denote $1\otimes_D \calX_{ij}$. 

{\bf Case 1}: $\alpha$ is not a root of unity. 
Let $\frakm=(\calX_{11}\calX_{22}-\alpha)$. Since 
$$
  \sigma(\calX_{11}\calX_{22}-\alpha)=-\calX_{12}\calX_{21}+2c-\alpha=-\frac{1}{\alpha \calX_{11}\calX_{22}}(\calX_{11}\calX_{22}-\alpha).
$$
Thus $\frakm$ is a $\sigma$-ideal. Furthermore, $\frakm\neq T$, as $P$ is also a zero of $\calX_{11}\calX_{22}-\alpha$. Suppose that $f\in T\setminus \frakm$ and $\tilde{\frakm}$ is the $\sigma$-ideal generated by $\frakm$ and $f$. Using the relations (\ref{eqn:relations}), there are positive integers $\nu_1,\nu_2$ such that 
$$
  \calX_{22}^{\nu_2}\calX_{11}^{\nu_1}f=b(\calX_{11}\calX_{22}-\alpha)+\tilde{f}(\calX_{11})
$$ 
for some $b\in T,\tilde{f}\in C(m)[\calX_{11}]$. Since $f\notin \frakm$, $\tilde{f}\in \tilde{\frakm}\setminus \{0\}$. Let $\tilde{g}\in \tilde{\frakm}\setminus\{0\}$ be of the form $\tilde{g}=\calX_{11}^s+\sum_{i=0}^{s-1} a_i\calX_{11}^i$, where $a_i\in C(m)$ and $s$ is minimal. Then $\sigma(\tilde{g})=\calX_{21}^s+\sum_{i=0}^{s-1} \sigma(a_i)\calX_{21}^i\in \tilde{\frakm}$. Using the relations (\ref{eqn:relations}) again,
one sees that 
$$
  (\calX_{11}\calX_{22})^s\sigma(\tilde{g})=p(\calX_{11}\calX_{21}-\alpha)+\calX_{11}^s+\sum_{i=0}^{s-1} \sigma(a_i)\alpha^{s-i}\calX_{11}^i
$$ 
for some $p\in T$ and thus
$
   \sum_{i=0}^{s-1} (a_i-\alpha^{s-i}\sigma(a_i))\calX_{11}^i\in \tilde{\frakm}.
$
Since $s$ is minimal, we see that $a_i-\alpha^{s-i}\sigma(a_i)=0$ for all $i=0,\dots,s-1$. 
If $a_i=0$ for all $i$ then $\calX_{11}\in \tilde{\frakm}$ and thus $\tilde{\frakm}=T$, because $\calX_{11}$ is invertible. Now suppose that there is some $a_i$ that is not zero. Then from $\sigma(a_i)=\alpha^{s-i}a_i$, $\alpha^{s-i}=1$, a contradiction. Hence $\tilde{\frakm}=T$ and so $\frakm$ is a maximal $\sigma$-ideal. 

Now suppose $g=(g_{ij})\in G$. Then 
\[
   \rho_g(\calX_{11}\calX_{22}-\alpha)=\begin{cases}
               \frac{g_{11}}{\calX_{11}}(\calX_{11}\calX_{22}-\alpha) & g_{11}g_{22}=1\\
               -\frac{\alpha g_{12}}{\calX_{22}}(\calX_{11}\calX_{22}-\alpha^{-1}) & g_{12}g_{21}=1
   \end{cases}.
\]
Since $\alpha^2\neq 1$, $\calX_{11}\calX_{22}-\alpha^{-1}\notin \frakm$. So the condition $\rho_g(\frakm)\subset \frakm$ implies that $g_{11}g_{22}=1$. In other words, 
$$
  \stab(\frakm)=\left\{\begin{pmatrix} \xi & 0 \\ 0 & \xi^{-1}  \end{pmatrix} \mid  \xi\in C^{\times}\right\}.
$$ 

{\bf Case 2}: $\alpha$ is a $q$-th root of unity. Set $\frakm=(\calX_{11}\calX_{22}-\alpha,\calX_{11}^q-1)$. Then $\frakm$ is a nontrivial $\sigma$-ideal, as $P$ is a common zero of $\calX_{11}\calX_{22}-\alpha$ and $\calX_{11}^q-1$. Replacing $\frakm$ with $(\calX_{11}\calX_{22}-\alpha,\calX_{11}^q-1)$ in {\bf Case 1}, we have that $a_i-\alpha^{s-i}\sigma(a_i)=0$ for all $i=0,\dots,s-1$ and $s<q$. If all $a_i=0$ then $\calX_{11}\in \tilde{\frakm}$ and so $T=\tilde{\frakm}$. Otherwise there is some $a_i$ that is not zero. Then from $\sigma(a_i)-\alpha^{s-i}a_i=0$, $\alpha^{s-i}=1$, a contradiction with the fact that $\alpha$ is a $q$-th root of unity. Hence $\frakm$ is a maximal $\sigma$-ideal. Now suppose $g=(g_{ij})\in G$ such that $\rho_g(\frakm)\subset \frakm$. Then $g_{11}g_{22}=1$ by the argument as in {\bf Case 1}. So $g_{12}=0$ and then $\rho_g(\calX_{11}^q-1)=g_{11}^q\calX_{11}^q-1\in \frakm$. This implies that $g_{11}^q=1$. Consequently, one has that
$$
  \stab(\frakm)=\left\{\begin{pmatrix} \xi & 0 \\ 0 & \xi^{-1}  \end{pmatrix} \mid  \xi^q=1\right\}.
$$ 
\end{exam}
\begin{exam}
\label{exam:deltagroups}
Let $D,\frakh, \R$ be as in Example~\ref{exam:sigmagroups}. Let $c\in C=\Hom_C(C[m],C)$. As $f_i^c=f_i$ for all $i=1,2,3$, $\langle f_1,f_2,f_3\rangle\neq \langle 1 \rangle$. By Remark~\ref{rem:specializations}, $T=C(t)\otimes_{D} \R$ is not the zero ring. Let $\frakn$ be a maximal $\delta$-ideal of $T$. Denote $(\bar{\calX}_{ij})=1\otimes \calX \mod \frakn$. Suppose $\frakn\neq (0)$. Let $f\in \frakn\setminus\{0\}$. 
Using the relations (\ref{eqn:relations}), there are positive integers $\nu_1,\nu_2$ such that
$$
   \bar{\calX}_{11}^{\nu_1}\bar{\calX}_{22}^{\nu_2}\bar{f}=\sum_{i=0}^s (a_{i,1}\bar{\calX}_{11}\bar{\calX}_{22}+a_{i,0})\bar{\calX}_{11}^i=0.
$$
where $a_{i,0},a_{i,1}\in C(t)$. As $\bar{\calX}_{11}\bar{\calX}_{22}$ is algebraic over $C(t)$, $\bar{\calX}_{11}$ is algebraic over $C(t)$.
An easy calculation yields that
$$
   \delta(\bar{\calX}_{11})=\frac{c-1}{1-t^2}(t\bar{\calX}_{11}-\bar{\calX}_{21})=\frac{c-1}{1-t^2}\left(t-\frac{1}{\bar{\calX}_{11}\bar{\calX}_{22}}\right)\bar{\calX}_{11}.
$$
On the other hand, one has that
$$
   \delta(\bar{\calX}_{11}\bar{\calX}_{22})=\frac{-1}{1-t^2}\left(t-\frac{1}{\bar{\calX}_{11}\bar{\calX}_{22}}\right)\bar{\calX}_{11}\bar{\calX}_{22}.
$$
Therefore
$$
   \frac{\delta(\bar{\calX}_{11})}{\bar{\calX}_{11}}=(1-c)\frac{\delta(\bar{\calX}_{11}\bar{\calX}_{22})}{\bar{\calX}_{11}\bar{\calX}_{22}}.
$$
Since $\bar{\calX}_{11}\bar{\calX}_{22}$ is algebraic over $C(t)$, using the Puiseux series expansion of $\bar{\calX}_{11}\bar{\calX}_{22}$ at some pole, one sees that $\bar{\calX}_{11}$ is algebraic over $C(t)$ if and only if $c\in \Q$. This implies that $\frakn=(0)$ if and only if $c\notin \Q$. Now suppose $c=p/q$ with $p,q\in \Z,\gcd(p,q)=1$ and $q>0$. Then $\bar{\calX}_{11}^q=\beta(\bar{\calX}_{11}\bar{\calX}_{22})^{q-p}$ for some nonzero $\beta\in C$. Hence $\frakn=(\calX_{11}^q-\beta(\calX_{11}\calX_{22})^{q-p})$. In fact, $z^q-\beta(\calX_{11}\calX_{22})^{q-p}$ is the minimal polynomial of $\bar{\calX}_{11}$ over $C(t,\calX_{11}\calX_{22})$. As $\calX_{12}\calX_{11}=1$ and $\calX_{21}\calX_{22}=1$, one also has that $\frakn=(\calX_{12}^q-\beta(\calX_{12}\calX_{21})^{p-q})$. Therefore, for any $(g_{ij})\in G$,
\[
\rho_g(\calX_{12}^p-\calX_{22}^{p-q})=
\begin{cases}
   g_{11}^q\calX_{11}^q-\beta(\calX_{11}\calX_{22})^{q-p} & g_{12}=g_{21}=0\\
   g_{21}^q\calX_{12}^q-\beta(\calX_{12}\calX_{21})^{q-p} & g_{11}=g_{22}=0
\end{cases} 
\]
and the condition $\rho_g(\calX_{11}^q-\beta(\calX_{11}\calX_{22})^{p-q})\in \frakn$ implies that either $g_{11}^q=1$ or $g_{21}^q=1$. These imply that
\begin{align*}
    \stab(\frakn)=
    \begin{cases}
       G & c\notin\Q \\
       \left\{\begin{pmatrix} \xi & 0 \\ 0 & \xi^{-1} \end{pmatrix}, \begin{pmatrix} 0 & \xi \\ \xi^{-1} & 0 \end{pmatrix} \mid \xi^q=1\right\} & c=\frac{p}{q}\in \Q.
    \end{cases}
\end{align*}
\end{exam}

\subsection{$G$ is the product of two suitable algebraic subgroups}
\label{subsec:product}
In this subsection, we shall show that for suitable $c_1$ and $c_2$ we have that $G$ is the product of $\stab(\frakm)$ and $\stab(\frakn)$, where $\frakm$ is any maximal $\sigma$-ideal of $\calS_{\sigma,c_1}$, and $\frakn$ is any maximal $\delta$-ideal of $\calS_{\delta,c_2}$.

Throughout this subsection, let $\breve{R}$ and $\K$ be as in Notation~\ref{not:sufficientcondition}. Proposition~\ref{prop:strongsigmapv} implies that $\breve{R}$ is a $\sigma$-Picard Vessiot ring over $\K^\sigma(x)$ for $\sigma(Y)=AY$. Remark that for each $\tau\in \sigma\delta$-$\Gal(\K/k_0(x))$ and $a\in \K^\sigma$ one has that $\tau(a)\in \K^\sigma$. Therefore $\K^\sigma(x)$ is invariant under the action of $\sigma\delta$-$\Gal(\K/k_0(x))$. From this, $\sigma\delta$-$\Gal(\K/\K^\sigma(x))$ is a normal subgroup of $\sigma\delta$-$\Gal(\K/k_0(x))$. Using the normality of $\sigma\delta$-$\Gal(\K/\K^\sigma(x))$, we shall show that $\K^\sigma(x)$ is the field of fractions of $\R^{\sdgal(\K/\K^\sigma(x))}$. To this end, we need the following lemma which was shown in the proof of Proposition 6.3.5 on page 157 of \cite{CrespoandHajtodifferentialgaloistheory}. Suppose $H$ is an algebraic subgroup of $\GL_n(C)$. Then $C[H]$ can be endowed with an $H$-module structure by setting $h(a)=a(\calZ h)$ for any $a\in C[H]$ and any $h\in H$.
\begin{lem}
\label{lm:character}
Let $H$ be an affine algebraic group over $C$. Let $N$ be a normal algebraic subgroup of $H$ and $\chi$ a character of $N$. Then there is a nonzero $a\in C[H]$ such that $h(a)=\chi(h)a$ for any $h\in N$.
\end{lem}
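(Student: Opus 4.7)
The plan is geometric: I would realize the desired semi-invariant as a global section of a line bundle on the quotient $G = H/N$, and show that this line bundle has nonzero sections.

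First, I would invoke the standard theorem that for $N$ a closed normal subgroup of an affine algebraic group $H$, the quotient $G = H/N$ exists as an affine algebraic group over $C$ with coordinate ring $C[G] = C[H]^N$ (the $N$-invariants under right translation), and the projection $\pi : H \to G$ is a principal $N$-bundle. Then I would associate to $\chi$ the line bundle $\mathcal{L}_\chi = H \times^N C_\chi$ on $G$, where $N$ acts on $C_\chi = C$ via $\chi$. Its global sections identify canonically with
$$
  \Gamma(G, \mathcal{L}_\chi) \;\cong\; \{\, f \in C[H] : f(xn) = \chi(n) f(x) \text{ for all } n \in N,\, x \in H \,\},
$$
which, under the $H$-action $h(a) = a(\calZ h)$ on $C[H]$ specified in the lemma, is precisely the set of $a \in C[H]$ satisfying $h(a) = \chi(h) a$ for every $h \in N$.

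To conclude, I would observe that since $G$ is affine, $\mathcal{L}_\chi$ corresponds to an invertible (rank-one projective) $C[G]$-module. Any invertible module is by definition nonzero, so $\Gamma(G, \mathcal{L}_\chi) \neq 0$, and any nonzero global section serves as the required $a$.

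The main obstacle will be the first step: setting up the affine quotient $H/N$ and verifying the principal $N$-bundle structure of $H \to G$ invokes nontrivial machinery from the theory of linear algebraic groups (in particular, that closed normal subgroups of affine algebraic groups admit affine quotients). Once this is in place, the construction of $\mathcal{L}_\chi$ and the identification of its global sections are routine, and nonvanishing is immediate. An alternative, more elementary route would lift $\chi$ to some $b \in C[H]$ via the surjection $C[H] \to C[N]$, form the finite-dimensional $H$-submodule $V \subset C[H]$ generated by $b$ under right translation (using local finiteness of the action), and seek a $\chi$-weight vector for $N$ inside $V$; the difficulty there is that while the restriction $V \to C[N]$ sends $b$ to the $\chi$-weight vector $\chi \in C[N]$, upgrading this to a genuine $\chi$-weight vector in $V$ requires splitting a short exact sequence of $N$-modules that can fail when $N$ is non-reductive, forcing one to descend to $N^u$-invariants (for $N^u$ the unipotent radical) and work with the reductive quotient $N/N^u$. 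For this reason I would prefer the geometric route.
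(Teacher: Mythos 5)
Your proposal is correct, but it takes a genuinely different route from the paper, which simply outsources the lemma to the (algebraic, Hopf-theoretic) argument inside the proof of Proposition 6.3.5 of Crespo--Hajto. You instead use that, since $N$ is \emph{normal}, the quotient $H/N$ is again an \emph{affine} algebraic group, descend the $N$-equivariant trivial line bundle $\mathcal{O}_H\otimes C_\chi$ along the torsor $H\to H/N$ to an invertible sheaf $\mathcal{L}_\chi$ on $H/N$, identify $\Gamma(H/N,\mathcal{L}_\chi)$ with the $\chi$-semi-invariants in $C[H]$ (up to the harmless replacement of $\chi$ by $\chi^{-1}$, depending on the convention for the associated bundle), and conclude nonvanishing because an invertible module over the nonzero ring $C[H]^N$ cannot be zero. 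This is sound: normality is used exactly where it must be, namely to guarantee affineness of $H/N$ (for non-normal $N$ the quotient may be projective and $\mathcal{L}_\chi$ may well have no sections, consistent with the lemma failing there), and your diagnosis of why the naive lifting argument through $C[H]\twoheadrightarrow C[N]$ breaks down for non-reductive $N$ is accurate. Two small points of precision: the torsor $H\to H/N$ is in general only locally trivial in the fppf (or \'etale) topology, not the Zariski topology, so you should invoke faithfully flat descent of quasi-coherent modules rather than a literal local trivialization --- the descended $\mathcal{L}_\chi$ is nonetheless an honest invertible $C[H/N]$-module, which is all you need; and the existence of the affine quotient $H/N$ with $C[H/N]=C[H]^N$ is indeed a nontrivial input, but a standard one. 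What your approach buys beyond the cited proof is structural information: the full space of $\chi$-semi-invariants is exhibited as a rank-one projective module over $C[H]^N$, not merely shown to be nonzero.
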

\begin{proof}
See the proof of Proposition 6.3.5 on page 157 of \cite{CrespoandHajtodifferentialgaloistheory}.
\end{proof}
\begin{lem}
\label{lm:constants}
Let $P,Q\in \K^\sigma[x]$ be such that $\gcd(P,Q)=1$ and $Q$ is monic. Suppose that $P/Q\in \R$. Then there is a nonzero $r\in \K^\sigma[x]$ such that $rQ\in \calD[x]$ and all coefficients of $rP$ are in $\R^\sigma$.
\end{lem}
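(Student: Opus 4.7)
The plan is to clear the $\R$-denominators of $P/Q$ via a convenient $h\in\calD[x]$, to argue that the resulting scalar quantity $hP/Q$ actually lies in $\calD[x]$, and then to set $r=h/Q$ after using $\gcd(P,Q)=1$ to divide.

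Concretely, I would first use $\alpha := P/Q\in\R=D[\calX,1/\det(\calX)]$ to write $\alpha=E/\det(\calX)^N$ with $E\in D[\calX]$ and $N\geq 0$, and then pick $h\in\calD[x]$, a product of powers of $\sigma^i(\frakh)$'s, so that $hE\in\calD[x][\calX]$. This yields
\[
h\alpha \;=\; \frac{hE}{\det(\calX)^N} \;\in\; \calD[x][\calX,1/\det(\calX)] \;\cap\; \K^\sigma(x),
\]
the intersection reflecting that $h\alpha=hP/Q$ also lies in $\K^\sigma(x)\subset\breve{R}$. The crucial claim will be that $hP/Q$ in fact belongs to $\calD[x]$. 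Granting this, $hP=gQ$ with $g=hP/Q\in\calD[x]$, so $Q\mid hP$ in $\K^\sigma[x]$; since $\gcd(P,Q)=1$ and $Q$ is monic, $Q\mid h$ in $\K^\sigma[x]$, and setting $r:=h/Q\in\K^\sigma[x]\setminus\{0\}$ gives $rQ=h\in\calD[x]$ and $rP=g\in\calD[x]$, so the coefficients of $rP$ automatically lie in $\calD\subset\R^\sigma$.

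To establish $hP/Q\in\calD[x]$, I would work inside $\breve{R}=\K^\sigma(x)[\calX,1/\det(\calX)]$, where
\[
hE \;=\; (hP/Q)\cdot \det(\calX)^N
\]
holds. Expanding $hE=\sum_\mu F_\mu\calX^\mu$ with $F_\mu\in\calD[x]$ and $\det(\calX)^N=\sum_\mu d_\mu\calX^\mu$ with $d_\mu\in\Z$, any choice of a monomial $\calX^{\mu_0}$ with $d_{\mu_0}\neq0$ should, by coefficient comparison, yield $F_{\mu_0}=(hP/Q)\,d_{\mu_0}$, and hence $hP/Q=F_{\mu_0}/d_{\mu_0}\in\calD[x]$ since $\Q\subset C\subset\calD$.

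The hard part is precisely this coefficient comparison: the entries of $\calX$ in $\breve{R}$ satisfy nontrivial polynomial relations over $\K^\sigma(x)$ coming from the $\sigma$-Galois torsor structure, so naively equating $\calX^\mu$-coefficients of the two sides is not legitimate. I plan to overcome this in one of two ways. The first is to pass to the ``universal'' ring $\K^\sigma(x)[X_{ij},1/\det(X)]$ and work modulo the defining ideal of the torsor, showing that, after possibly enlarging $h$ by an auxiliary factor in $\calD[x]$ that absorbs the ambiguity from this ideal, the desired extraction of $F_{\mu_0}$ becomes valid. The second is to specialize through the embedding $R\hookrightarrow\Seq_K$ from Proposition~\ref{prop:embeddingmap}: at infinitely many indices $s$ the numerical matrices $W_s\in\GL_n(K)$ and the shifted scalars $x\mapsto c+s$ provide enough generic data (Vandermonde-style) to recover $hP/Q$ from $hE$ in $K[x]$. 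This reduction to a generic numerical setting is where the real work lies; once it is carried out, every other step above is formal.
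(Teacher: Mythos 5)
Your argument hinges on the claim that $hP/Q\in\calD[x]$ for a suitable $h\in\calD[x]$, and this claim is false; consequently the whole route (including $Q\mid h$ and $r=h/Q$) collapses. A counterexample sits inside the paper's running example: take $P=\calX_{11}\calX_{22}$ and $Q=1$. Then $P/Q\in\R\cap\K^\sigma$ and the hypotheses of the lemma hold, but $\calX_{11}\calX_{22}=t+\sqrt{t^2-1}$ is algebraic of degree $2$ over $C(t)$, so no nonzero $h\in\calD[x]=C[t][m]$ can make $h\cdot(t+\sqrt{t^2-1})$ land in $C[t][m]$. The underlying issue is that you are implicitly identifying $\K^\sigma$ with $k_0=\mathrm{Frac}(\calD)$. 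In general $\K^\sigma$ is a proper $\delta$-field extension of $k_0$ (that is the entire point of Proposition~\ref{prop:fieldoffraction}(2), which says $\K^\sigma$ is the fraction field of $\R^\sigma$, not of $\calD$), and the lemma accordingly only asserts that the coefficients of $rP$ lie in $\R^\sigma$ --- a ring much larger than $\calD$. Since the intermediate statement you aim for is strictly stronger than the lemma and actually false, neither of your two proposed repairs (quotienting by the torsor ideal, or specializing into $\Seq_K$) can succeed: the obstruction is not a technicality about coefficient comparison but the fact that monomials in $\calX$ can equal elements of $\K^\sigma\setminus k_0$, which is exactly what the relations of the torsor encode.

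For comparison, the paper's proof splits the two conclusions and never tries to push $P/Q$ into $\calD[x]$. For $rQ\in\calD[x]$: since $P/Q\in\R$, its $\sigma$-orbit spans a finite-dimensional $k_0(x)$-space, giving a relation $\sum_{i=0}^m a_i\,\sigma^i(P/Q)=0$ with $a_i\in\calD[x]$, $a_m\neq 0$; clearing denominators and analyzing which shifts $Q_j(x+s_j+m)$ of the irreducible factors of $Q$ can divide each side forces every $Q_j$ to divide a shift of $a_m$, so a product of such shifts serves as $r$. For the coefficients of $rP=\sum_i p_i x^i$ with $p_i\in\K^\sigma$: applying $\sigma^0,\dots,\sigma^\ell$ fixes the $p_i$ and shifts $x$, so the Vandermonde matrix in $x,x+1,\dots,x+\ell$ (whose inverse has entries in $\Q[x]\subset\R$) expresses each $p_i$ as an $\R$-linear combination of the $\sigma^i(rP)\in\R$, whence $p_i\in\R\cap\K^\sigma=\R^\sigma$. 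If you want to salvage your write-up, you should abandon the claim $hP/Q\in\calD[x]$ and aim only at $p_i\in\R^\sigma$; the Vandermonde step is the natural tool for that.
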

\begin{proof}
We first show that there is a nonzero $r\in \K^\sigma[x]$ such that $rQ\in \calD[x]$. Suppose $Q=\prod_{i=1}^\ell Q_i$ where $Q_i$ is monic and irreducible over $\K^\sigma[x]$. Let $s_i$ be the largest integer such that $Q_i(x+s_i)$ divides $Q$. Since $P/Q\in \R$, the set $\{ \sigma^i(P/Q) \mid \forall i\geq 0\}$ generates a $k_0(x)$-vector space of finite dimension. Hence there are $a_0, \dots, a_m\in \calD[x]$ with $a_m\neq 0$ such that $\sum_{i=0}^m a_i \sigma^i(P/Q)=0$. Multiplying both sides by $\prod_{i=0}^m \sigma^i(Q)$ yields that
$$
    \left(\prod_{i=0}^{m-1}\sigma^i(Q)\right)\sigma^m(P)a_m=\sigma^m(Q)N,\,\,N\in \K^\sigma[x].
$$
If $Q_i(x+s_i+m)$ divides $\sigma^j(Q)$ for some $0\leq j \leq m-1$ then $Q_i(x+s_i+m-j)$ divides $Q$. This contradicts with the choice of $s_i$. Hence $Q_i(x+s_i+m)$ does not divide $\prod_{j=0}^{m-1}\sigma^j(Q)$. It is clear that $Q_i(x+s_i+m)$ does not divide $\sigma^m(P)$ too. While $Q_i(x+s_i+m)$ divides $\sigma^m(Q)$. This implies that $Q_i(x+s_i+m)$ divides $a_m$. Write $a_m=Q_i(x+s_i+m)r_i(x+m+s_i)$ for some $r_i\in \sigma^\sigma[x]$. Then $r_iQ_i=a_m(x-s_i-m)\in \calD[x]$. Set $r=\prod_{i=1}^\ell r_i$. Then $rQ=\prod_{i=1}^\ell a_m(x-s_i-m)\in \calD[x]$.

It is clear that $rP\in \R$. Write $rP=\sum_{i=0}^\ell p_i x^i$. Applying $\sigma^i, i=0,\dots, \ell$ to both sides yields that $M(p_0,\dots,p_\ell)^t=(rP,\dots,\sigma^\ell(rP))$. Here $M$ is the Vandermonde matrix formed by $x,x+1,\dots,x+\ell$, whose inverse has entries in $\R$. Therefore $(p_0,\dots,p_\ell)^t=M^{-1}(rP,\dots,\sigma^\ell(rP))^t \in \R^{\ell+1}$. Consequently, $p_i\in \R$ for all $i$. Since $p_i\in \K^\sigma$, $p_i\in \R^\sigma$.
\end{proof}
\begin{prop}
\label{prop:fieldoffraction}
Let $H=\sigma\delta$-$\Gal(\K/\K^\sigma(x))$. Then
\begin{enumerate}
\item $\K^\sigma(x)$ is the field of fractions of $\R^H$,
\item $\K^\sigma$ is the field of fractions of $\R^\sigma$.
\end{enumerate}
\end{prop}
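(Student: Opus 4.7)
The plan is to prove Part 2 first and then deduce Part 1 by a denominator-clearing argument. For the derivation of Part 1 from Part 2, the inclusion $\mathrm{Frac}(\R^H) \subseteq \K^\sigma(x)$ is immediate from the Galois correspondence $\K^H = \K^\sigma(x)$ (which makes $\R^H = \R \cap \K^\sigma(x)$ a subring of the field $\K^\sigma(x)$). Conversely, given $f = P/Q \in \K^\sigma(x)$ with $P, Q \in \K^\sigma[x]$, every coefficient of $P$ and $Q$ lies in $\K^\sigma$; by Part 2 we may pick a common denominator $e \in \R^\sigma \setminus \{0\}$ so that $eP, eQ \in \R^\sigma[x]$. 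Since $\R^\sigma \subseteq \R \cap \K^\sigma$ and $x \in D \subseteq \R$, these polynomials lie in $\R \cap \K^\sigma(x) = \R^H$, whence $f = (eP)/(eQ) \in \mathrm{Frac}(\R^H)$.

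For Part 2, the inclusion $\mathrm{Frac}(\R^\sigma) \subseteq \K^\sigma$ is immediate because $\K^\sigma$ is a field by Lemma~\ref{lm:sigmaconstant}. For the reverse, let $a \in \K^\sigma \setminus \{0\}$. Since $\K$ is the total ring of fractions of $R$ and $R = \R[(D \setminus \{0\})^{-1}]$, we first clear $D$-denominators to obtain $a = u_0/v_0$ with $u_0, v_0 \in \R$ and $v_0$ regular. The $\sigma$-invariance of $a$ then gives $a\sigma^i(v_0) = \sigma^i(u_0)$ for all $i$. The real content of Part 2 is to promote $u_0, v_0$ to genuinely $\sigma$-invariant representatives, i.e.\ to produce a regular $v \in \R^\sigma$ with $va \in \R$ (which will then automatically lie in $\R^\sigma$, since $\sigma(va) = \sigma(v)\sigma(a) = va$).

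The approach to this promotion is modeled on the proof of Proposition~\ref{prop:strongsigmapv}. Working in $\breve R$, take the $C$-vector space spanned by the Galois orbit of $v_0$ under $\sigma\delta\text{-}\Gal(\breve R/\K^\sigma(x))$, choose a basis, and apply Proposition~\ref{prop:linearindependence2} to obtain a nonzero Wronskian-like determinant $d = \det((\sigma^{s_i}(v_j)))$. Corollary~\ref{cor:groupinvariantideal} implies that the ideal $(d) \subseteq \breve R$ is Galois-stable, so $d$ is a unit in $\breve R$; the Galois correspondence together with Lemma~\ref{lm:intersection} then forces $\sigma(d)/d$ and $\delta(d)/d$ to lie in $\K^\sigma(x)$, and Lemma~\ref{lm:constants} supplies $\R^\sigma$-coefficient representations for the accompanying polynomials. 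The hard part will be the final assembly step: the Galois argument naturally produces only $\sigma$-semi-invariant elements (with character valued in $\K^\sigma$), and one must use the $\sigma\delta$-simplicity of $\R$ together with Lemma~\ref{lm:constants} to purify this semi-invariance into an honest $\sigma$-invariant denominator $v \in \R^\sigma$ with $va \in \R$, so that $a = (va)/v \in \mathrm{Frac}(\R^\sigma)$.
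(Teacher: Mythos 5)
Your reorganization---proving statement 2 first and then deducing statement 1---is a viable architecture, and the easy half is correct: once statement 2 is known, clearing the $\K^\sigma$-denominators of the coefficients of $P$ and $Q$ by a single $e\in\R^\sigma\setminus\{0\}$ places $eP,eQ$ in $\R\cap\K^\sigma(x)=\R^H$ (the equality coming from the Galois correspondence $\K^H=\K^\sigma(x)$), so $f=(eP)/(eQ)$ lies in the field of fractions of $\R^H$. The difficulty is that this shifts the entire weight of the proposition onto statement 2, and your sketch of statement 2 stops exactly where the real content begins. Setting $U=\{r\in\R\mid ra\in\R\}$, the determinant construction modeled on Proposition~\ref{prop:strongsigmapv} does produce a nonzero $d\in U$ with $h(d)=\chi(h)\,d$ for all $h\in H$, but $\chi$ is a character of $H$ with values in $C^{\times}$ (not ``valued in $\K^\sigma$'' as you write; it is the multiplier $\sigma(d)/d$ that lands in $\K^\sigma(x)$ via Lemma~\ref{lm:intersection}). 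To go further you must \emph{cancel} this character: you need a nonzero $\tilde{a}\in\R$ with $h(\tilde{a})=\chi(h)^{-1}\tilde{a}$ for all $h\in H$, so that $\tilde{a}d\in U\cap\R^H$. The tools you name for the ``final assembly''---the $\sigma\delta$-simplicity of $\R$ and Lemma~\ref{lm:constants}---cannot manufacture such an element. The paper obtains it from Lemma~\ref{lm:character}, applied to the normal subgroup $H$ of $G$ inside $C[G]$, transported back to $R$ through the torsor isomorphism $\varphi$ and then scaled into $\R$ by an element of $\calD[x]$. This is the decisive step of the whole proof, and it is absent from your plan; as written, your argument dead-ends at a semi-invariant element.

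For what it is worth, once that ingredient is supplied your route does close, and it differs from the paper's only in bookkeeping. With $w=\tilde{a}d\in U\cap\R^H\subseteq\R\cap\K^\sigma(x)$, Lemma~\ref{lm:constants} gives $r$ with $rQ\in\calD[x]$ and $rP=\sum_i p_ix^i$, $p_i\in\R^\sigma$; then $(rP)a=(rQ)(wa)\in\R$, and the Vandermonde inversion from the end of the proof of Lemma~\ref{lm:constants} extracts $p_ia\in\R$ for each $i$, so for any nonzero $p_i$ one gets $p_ia\in\R\cap\K^\sigma=\R^\sigma$ and $a=(p_ia)/p_i$ as required (note $p_i$ is not a zero divisor by Lemma~\ref{lm:sigmaconstant}). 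The paper instead runs the character-cancellation argument to prove statement 1 directly and then derives statement 2 from the identity $f\,r_2P_2r_1Q_1=r_1P_1r_2Q_2$ by comparing leading coefficients, which avoids the coefficient extraction above. Neither direction of the derivation is free of the hard step; you have only relocated it, not removed it.
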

\begin{proof}
1. Suppose $f\in \K^\sigma(x)$. Set $U=\{a\in \R \mid af\in \R\}$. We shall show that $U\cap \R^H\neq \{0\}$. Let $a\in U\setminus\{0\}$. Then $\{h(a)\mid h\in H\}$ generates a $C$-vector space of finite dimension. Suppose that $\{a_1,\dots,a_m\}$ is a basis of this vector space. Since $\R$ is invariant under the action of $H$, this vector space is a subspace of $\R$. Moreover, as $\R$ is $\sigma\delta$-simple, by Proposition~\ref{prop:lineardependence}, there are $\theta_1,\dots,\theta_m\in \Theta$ with $\theta_1=1$ such that $d=\det((\theta_i(a_j))_{1\leq i,j\leq m})\neq 0$. Since $a_i\in U$ and $\theta_1=1$, $d\in U$. For each $h\in H$, $h(d)=\chi(h)d$ where $\chi$ is a character of $H$. We need to find an element $\tilde{a}\in \R$ such that $h(\tilde{a})=\chi^{-1}(h)\tilde{a}$ for any $h\in H$. Once we have such $\tilde{a}$, $\tilde{a}d\in \R^H\cap U$ and thus $\tilde{a}df\in \R^H$. Corollary~\ref{cor:groupinvariantideal} implies that neither $\tilde{a}$ nor $d$ is a zero divisor of $\breve{R}$ and thus none of them is a zero divisor of $\R$. This implies that $\tilde{a}d\in \R^H$ is not a zero divisor and so $f=r/\tilde{a}d$ for some $r\in \R^H$. We first find a nonzero $\beta\in R$ such that $h(\beta)=\chi^{-1}(h)\beta$. By Lemma~\ref{lm:character}, there is a nonzero $b\in C[G]$ such that $h(b)=\chi^{-1}(h)b$ for any $h\in H$. Define an action of $G$ on $R\otimes_{k_0(x)} R$ and $R\otimes_C C[G]$ by $g(r_1\otimes r_2)=r_1\otimes g(r_2)$ and $g(r_1 \otimes r_2)=r_1\otimes g(r_2)$ respectively. Then both $R\otimes_{k_0(x)} R$ and $R\otimes_C C[G]$ become $G$-modules and it is easy to see that the isomorphism $\varphi: R\otimes_{k_0(x)} R\rightarrow R\otimes_C C[G]$ given by $\varphi(r_1\otimes r_2)=(r_1\otimes 1)r_2(\calX\otimes \calZ)$ is a $G$-module isomorphism. Write $\varphi^{-1}(b)=b(\calX^{-1}\otimes \calX)=\sum_{i=1}^s \alpha_i\otimes \beta_i$ where $\{\alpha_i\}\subset R$ is linearly independent over $k_0(x)$ and none of $\beta_i$ is zero. Suppose $h\in H$. Then $h(\varphi^{-1}(b))=\sum_{i=1}^s \alpha_i\otimes h(\beta_i)$. On the other hand, one has that $h(\varphi^{-1}(b))=\varphi^{-1}(h(b))=\varphi^{-1}(\chi^{-1}(h)b)=\sum_{i=1}^s \alpha_i\otimes \chi^{-1}(h)\beta_i$. Therefore for each $i=1,\dots,s$, $h(\beta_i)=\chi^{-1}(h)\beta_i$ for any $h\in H$. Thus we can take $\beta$ to be any $\beta_i$. Let $q\in \calD[x]\setminus\{0\}$ be such that $p\beta\in \R$. It is easy to verify that $h(p\beta)=ph(\beta)=\chi^{-1}(h)p\beta$. So $p\beta$ satisfies the requirement.

2. Suppose that $f\in \K^\sigma$. The previous result implies that $f=a/b$ with $a,b\in \R^H\subset \K^\sigma(x)$. Write $a=P_1/Q_1,b=P_2/Q_2$ where $P_i,Q_i\in \K^\sigma[x], \gcd(P_i,Q_i)=1$ and $Q_i$ is monic. Due to Lemma~\ref{lm:constants}, there are nonzero $r_i\in \K^\sigma[x]$ such that all coefficients of $r_iP_i, r_iQ_i$ are in $\R$. From $fP_2Q_1=P_1Q_2$, one sees that $fr_2P_2 r_1Q_1=r_1P_1 r_2Q_2$  and thus $\lc(r_2P_2r_1Q_1)f=\lc(r_1P_1r_2Q_2)$, where $\lc(\cdot)$ denotes the leading coefficient of a polynomial. As $\lc(r_iP_i), \lc(r_iQ_i)\in \R\cap \K^\sigma=\R^\sigma$, $f$ is in the field of fractions of $\R^\sigma$.
\end{proof}

\begin{prop}
\label{prop:differentialgaloispart}
Let $\frakn$ be as in Theorem~\ref{thm:deltasubgroups} and $\calS_{\delta,c_2}=(k_0\otimes_D \R)/\frakn$.
The $\calD$-$\delta$-homomorphism $\phi: \R^\sigma\rightarrow \calS_{\delta,c_2}$ given by $a(\calX) \mapsto a^{c_2}(\bar{\calX})$ is injective, where $\bar{\calX}=1\otimes \calX \mod \frakn$. Furthermore, $\phi(\R^\sigma)$ is invariant under the action of $\dgal(\calS_{\delta,c_2}/k_0)$.
\end{prop}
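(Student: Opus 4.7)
The plan is to handle injectivity via the $\sigma\delta$-simplicity of $\R$, and to deduce Galois-invariance of $\phi(\R^\sigma)$ by transferring the $\sigma\delta$-Galois action on $\R$ to $\calS_{\delta,c_2}$ through the identification $\dgal(\calS_{\delta,c_2}/k_0)\cong\stab(\frakn)\subseteq G$ supplied by Theorem~\ref{thm:deltasubgroups}.

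For injectivity, suppose $a\in\R^\sigma$ lies in $\ker\phi$ and let $I$ denote the $\sigma\delta$-ideal of $\R$ generated by $a$. Because $\sigma(a)=a$ and $\sigma\delta=\delta\sigma$, every $\theta(a)$ with $\theta\in\Theta$ equals $\delta^k(a)$ for some $k\geq 0$, whence $I=\sum_{k\geq 0}\R\,\delta^k(a)$. By Lemma~\ref{lm:sigmadeltasimple} the ring $D$ is $\sigma\delta$-simple, and the argument in the proof of Lemma~\ref{lm:ringinjectivity} then shows $\R$ is $\sigma\delta$-simple as well. Hence $I=\R$, and we may write $1=\sum_{i=0}^{s}r_i\,\delta^i(a)$ for suitable $r_i\in\R$. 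The composition $\R\to k_0\otimes_D\R\to\calS_{\delta,c_2}$, $r\mapsto\overline{1\otimes r}$, is a $\delta$-homomorphism extending $\phi$; applied to the identity above it yields $1=\sum_i\overline{1\otimes r_i}\,\delta^i(\phi(a))=0$ in $\calS_{\delta,c_2}$, contradicting the fact that $\calS_{\delta,c_2}$ is nonzero (which holds since $\frakn$ is a proper $\delta$-ideal of the nonzero ring $k_0\otimes_D\R$).

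For Galois-invariance, note that each $g\in G$ acts on $R$ as a $\sigma\delta$-$k_0(x)$-automorphism with $g(\calX)=\calX g(\calZ)$, $g(\calZ)\in\GL_n(C)\subseteq\GL_n(D)$, and fixes $D$ pointwise; consequently $g$ restricts to a $\sigma\delta$-$D$-automorphism of $\R$ and therefore preserves $\R^\sigma$. Extending by $k_0$-linearity gives a $\delta$-$k_0$-automorphism $\rho_g$ of $T:=k_0\otimes_D\R$ that coincides with the one in~(\ref{eqn:automorphism}), and for $g\in\stab(\frakn)$ this descends to a $\delta$-$k_0$-automorphism $\overline{\rho_g}$ of $\calS_{\delta,c_2}$. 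The main bookkeeping step is to check that $g\mapsto\overline{\rho_g}$ agrees with the isomorphism $\stab(\frakn)\cong\dgal(\calS_{\delta,c_2}/k_0)$ of Theorem~\ref{thm:deltasubgroups}; this follows by placing $\rho_g(a(\tilde\calX))=a(\tilde\calX g(\calZ))$ alongside the formula $\rho_\gamma(a(\bar\calX))=a(\bar\calX\gamma(\bar\calZ))$ from Proposition~\ref{prop:galoisgroups} and using the identification $C[G]/\Phi_{\frakn,\frakn}\cong(\calS_{\delta,c_2}\otimes_{k_0}\calS_{\delta,c_2})^\delta$ of Proposition~\ref{prop:constantideal}. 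Granted this, for any $a\in\R^\sigma$ and any $g\in\stab(\frakn)$ one has
\[
\overline{\rho_g}(\phi(a))=\overline{\rho_g(1\otimes a)}=\overline{1\otimes g(a)}=\phi(g(a))\in\phi(\R^\sigma),
\]
since $g(a)\in\R^\sigma$; this yields the required invariance.
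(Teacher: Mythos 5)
Your proposal is correct and follows essentially the same route as the paper: injectivity via $\sigma\delta$-simplicity of $\R$ together with $\sigma(a)=a$ to write $1=\sum_i b_i\delta^i(a)$ and push this relation into $\calS_{\delta,c_2}$, and invariance via the identification of $\dgal(\calS_{\delta,c_2}/k_0)$ with $\stab(\frakn)\subseteq G$ acting by $\calX\mapsto\calX g(\calZ)$ combined with the $G$-invariance of $\R^\sigma$. The only cosmetic difference is the direction of the identification (you start from $g\in\stab(\frakn)$ and check surjectivity of $g\mapsto\overline{\rho_g}$, while the paper starts from $\tau$ in the Galois group and lifts to $\gamma\in\Hom_C(C[G],C)$), which amounts to the same bookkeeping.
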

\begin{proof}
 Note that $\phi(a)=a^{c_2}(\bar{\calX})=\overline{1\otimes a}$ for any $a\in \R^\sigma$. To prove the injectivity of $\phi$, it suffices to show that $1\otimes a\notin \frakn$ if $a\neq 0$. Suppose that $a\in \R^\sigma\setminus \{0\}$. Since $\sigma(a)=a$ and $\R$ is $\sigma\delta$-simple, there are $b_1,\dots,b_s\in \R$ such that
$1=\sum_{i=1}^s b_i\delta^i(a)$. This implies that $1\otimes 1=\sum_{i=1}^s 1\otimes b_i\delta^i(a)=\sum_{i=1}^s \delta^i(1\otimes a)(1\otimes b_i)$. In other words, the $\delta$-ideal generated by $1\otimes a$ equals $k_0\otimes_D \R$. Hence $1\otimes a\notin \frakn$.

Suppose that $\tau\in \dgal(\calS_{\delta,c_2}/k_0)$ and $a\in \R^\sigma$. Using the identification given in Proposition~\ref{prop:galoisgroups} with $R=\calS_{\delta,c_2}$, there is $\bar{\gamma}\in \Hom_C(C[G]/\Phi_{\frakn,\frakn}, C)$ such that $\bar{\gamma}(\bar{\calZ})=\bar{\calX}^{-1}\tau(\bar{\calX})$, where $\bar{\calZ}=\calZ \mod \Phi_{\frakn,\frakn}$. Let $\gamma$ be the unique element in $\Hom_C(C[G],C)$ such that $\gamma(\calZ)=\bar{\gamma}(\bar{\calZ})$. Then $\tau(\phi(a))=a^{c_2}(\bar{\calX}\bar{\gamma}(\bar{\calZ}))=a^{c_2}(\bar{\calX}\gamma(\calZ))$. Since $\gamma\in \Hom_C(C[G],C)$ and $\R^\sigma$ is invariant under the action of $\sdgal(R/k)$, $a(\calX\gamma(\calZ))\in \R^\sigma$. This implies that
$$
   \tau(\phi(a))=a^{c_2}(\bar{\calX}\gamma(\calZ))=\phi(a(\calX \gamma(\calZ)))\in \phi(\R^\sigma).
$$
Thus $\phi(\R^\sigma)$ is invariant under the action of $\dgal(\calS_{\delta,c_2}/k_0)$.
\end{proof}

Let $\F_{\delta,c_2}$ be the field of fractions of $\calS_{\delta,c_2}$. Then the $\delta$-homomorphism $\phi$ given in Proposition~\ref{prop:differentialgaloispart} can be extended into an embedding of $\K^\sigma$ into $\F_{\delta,c_2}$ and $\phi(\K^\sigma)$ is invariant under the action of $\dgal(F_{\delta,c_2}/k_0)$ that is an algebraic subgroup of $G$. By the Galois correspondence (see for example Proposition 1.34 on page 25 of \cite{van2012galois}), $\phi(\K^\sigma)$ is a $\delta$-Picard-Vessiot field for some $\delta$-linear system over $k_0$ and the following canonical map
\begin{align*}
   \pi: \dgal(\F_{\delta,c_2}/k_0) &\longrightarrow  \dgal(\phi(\K^\sigma)/k_0) \\
          g &\longmapsto g|_{\phi(\K^\sigma)}.
\end{align*}
is surjective and has kernel $\dgal(\F_{\delta,c_2}/\phi(\K^\sigma))$.
\begin{prop}
\label{prop:generators}
Let $\sdgal(\K/\K^\sigma(x))$ be identified with an algebraic subgroup $H$ of $G$ and $H'=\stab(\frakn)$, where $\frakn$ is as in Theorem~\ref{thm:deltasubgroups}.
Then $G=H H'$.
\end{prop}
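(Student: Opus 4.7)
The plan is to show that the composite $\psi\colon H'\hookrightarrow G\twoheadrightarrow G/H$ is surjective, which is equivalent to $G=H\cdot H'$. To set up a comparison I will use the auxiliary restriction homomorphism
$$
\iota\colon G/H\longrightarrow \dgal(\K^\sigma/k_0),\qquad gH\longmapsto g|_{\K^\sigma}.
$$
This is well-defined because $\K^\sigma$ is $G$-stable (the $G$-action commutes with $\sigma$) and pointwise fixed by $H$ (which fixes $\K^\sigma(x)$ by definition). It is injective because any $g\in G$ with $g|_{\K^\sigma}=\id$ also fixes $x\in k_0(x)$ and hence all of $\K^\sigma(x)$, putting $g$ in $H$.

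The crux of the argument is to identify $\iota\circ\psi$ with the surjection
$\pi\colon H'=\dgal(\F_{\delta,c_2}/k_0)\twoheadrightarrow\dgal(\phi(\K^\sigma)/k_0)$
introduced just before the proposition, after transporting $\dgal(\phi(\K^\sigma)/k_0)$ to $\dgal(\K^\sigma/k_0)$ via the canonical isomorphism induced by $\phi$. Equivalently, one needs the equivariance
$$
\phi(g(a))\,=\,g(\phi(a))
$$
for every $g\in H'$ and every $a\in\R^\sigma$, where on the left $g$ acts on $\R\subseteq\K$ as an element of $G$, and on the right $g$ acts on $\F_{\delta,c_2}$ as an element of $\dgal(\F_{\delta,c_2}/k_0)$ under the identification $H'=\stab(\frakn)=\dgal(\calS_{\delta,c_2}/k_0)$ of Theorem~\ref{thm:deltasubgroups}.

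To verify this equivariance, it suffices to check the identity on the fundamental matrix $\calX$. On one hand, $g(\calX)=\calX\, g(\calZ)$ with $g(\calZ)\in\GL_n(C)$, so $\phi(g(\calX))=\overline{1\otimes \calX g(\calZ)}=\bar{\calX}\, g(\calZ)$; on the other, formula (\ref{eqn:automorphism}) gives $\rho_g(\tilde{\calX})=\tilde{\calX}\, g(\calZ)$, which upon reduction modulo $\frakn$ yields $g(\bar{\calX})=\bar{\calX}\, g(\calZ)$. Since both $a\mapsto\phi(g(a))$ and $a\mapsto g(\phi(a))$ are ring homomorphisms $\R\to\calS_{\delta,c_2}$ agreeing on $D$ (both fix $c_2(D)\subseteq k_0$) and on $\calX$, and $\R=D[\calX,1/\det(\calX)]$, they agree on all of $\R$ and in particular on $\R^\sigma$; passing to fraction fields via Proposition~\ref{prop:fieldoffraction}(2) extends the identity to $\K^\sigma$.

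Granted $\iota\circ\psi=\pi$, the surjectivity of $\pi$ forces $\iota\circ\psi$ to be surjective, and the injectivity of $\iota$ then forces $\psi$ itself to be surjective, yielding $G=H\cdot H'$. The only delicate point is the equivariance verification of the previous paragraph; once formula (\ref{eqn:automorphism}) and the identification between $\stab(\frakn)$ and $\dgal(\calS_{\delta,c_2}/k_0)$ are in hand, everything else is a formal consequence of the Galois correspondence and Proposition~\ref{prop:differentialgaloispart}.
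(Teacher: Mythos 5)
Your argument is correct and is essentially the paper's own proof in different packaging: the paper fixes $g\in G$, lifts $\phi\circ\tau_g|_{\K^\sigma}\circ\phi^{-1}$ through the surjection $\pi$ to some $h\in\stab(\frakn)$, and then verifies by exactly your $\phi$-equivariance computation (on $f=a/b$ with $a,b\in\R^\sigma$, via Proposition~\ref{prop:fieldoffraction}) that $\tau_{gh^{-1}}$ fixes $\K^\sigma(x)$, so $gh^{-1}\in H$. Your reformulation through the injective restriction map $\iota\colon G/H\to\dgal(\K^\sigma/k_0)$ and the identity $\iota\circ\psi=\pi$ is the same argument run in the quotient, resting on the same two ingredients (surjectivity of $\pi$ and compatibility of $\phi$ with the two Galois actions).
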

\begin{proof}
Recall that for $g\in G=\Hom_C(C[G],C)$ the corresponding automorphism of $R$ over $k$ is $\tau_g\in \sdgal(R/k)$ such that $g(\calZ)=\calX^{-1}\tau_g(\calX)$. Furthermore if $g\in \stab(\frakn)=\Hom_C(C[G]/\Phi_{\frakn,\frakn}, C)\subset \Hom_C(C[G],C)$ then the corresponding automorphism of $\calS_{\delta,c_2}$ over $k_0$ is $\gamma_g\in \dgal(\calS_{\delta,c_2}/k_0)$ such that $\gamma_g(\bar{\calX})=\bar{\calX}g(\calZ)$ where $\bar{\calX}= 1\otimes \calX \mod \frakn$. Now suppose $g\in G$. Then $\tau_g\in \sdgal(\K/k)$ and $\tau_g |_{\K^\sigma}\in \dgal(\K^\sigma/k_0)$. Let $\phi$ be given as in Proposition~\ref{prop:differentialgaloispart}. Then $\phi$ can be viewed as a $k_0$-$\delta$-isomorphism from $\K^\sigma$ to $\phi(\K^\sigma)$ and thus $\phi\circ \tau_g|_{\K^\sigma} \circ\phi^{-1} \in \dgal(\phi(\K^\sigma)/k_0)$. Let $h\in\stab(\frakn)$ be such that $\gamma_h=\phi\circ \tau_g|_{\K^\sigma} \circ\phi^{-1}$, \ie $\phi\circ \tau_g|_{\K^\sigma} \circ\phi^{-1}\circ \gamma_h^{-1}=\id$. We claim that $\tau_{gh^{-1}}(f)=f$ for all $f\in \K^\sigma(x)$. It suffices to show that $\tau_{gh^{-1}}(f)=f$ for all $f\in \K^\sigma$. Suppose $f\in \K^\sigma$. Due to Proposition~\ref{prop:fieldoffraction}, we may write $f=a/b$ with $a,b\in \R^\sigma$ and $b\neq 0$. We then have that
\begin{align*}
  \phi\left(\tau_{gh^{-1}}\left(\frac{a}{b}\right)\right)&=\phi\left(\frac{a(\calX g(\calZ)h(\calZ)^{-1})}{b(\calX g(\calZ)h(\calZ)^{-1})}\right)=\frac{a^c(\bar{\calX} g(\calZ)h(\calZ)^{-1})}{b^c(\bar{\calX} g(\calZ)h(\calZ)^{-1})}\\
  &=\phi\circ \tau_g|_{\K^\sigma} \circ\phi^{-1}\circ \gamma_h^{-1}\left(\frac{a^c(\bar{\calX})}{b^c(\bar{\calX})}\right)=\frac{a^c(\bar{\calX})}{b^c(\bar{\calX})}=\phi\left(\frac{a}{b}\right).
\end{align*}
Since $\phi$ is injective, $\tau_{gh^{-1}}(f)=f$. This proves our claim. Hence one has that $\tau_{gh^{-1}}\in \sdgal(\K/\K^\sigma(x))$ and then $gh^{-1}\in H$ under the identification. Therefore $g\in HH'$. It is clear that $HH'\subset G$. So $G=HH'$.
\end{proof}

\begin{exam}
\label{exam:normalsubgroups}
In Examples~\ref{exam:sigmadeltagroups} and~\ref{exam:deltagroups}, we have already known that
\begin{align*}
  G&=\{ (g_{ij})\in\GL_2(C) \mid g_{11}g_{12}=0, g_{21}g_{22}=0, g_{11}g_{22}+g_{12}g_{21}=1\}\\
  &=\left\{\begin{pmatrix} g_{11} & 0 \\ 0 & g_{22} \end{pmatrix} \mid g_{11}g_{22}=1\right\}\cup \left\{\begin{pmatrix} 0 & g_{12} \\ g_{21} & 0\end{pmatrix} \mid g_{12}g_{21}=1\right\}
\end{align*}
and
$$
   H'=
    \begin{cases}
       G & c\notin\Q \\
       \left\{\begin{pmatrix} \xi & 0 \\ 0 & \xi^{-1} \end{pmatrix}, \begin{pmatrix} 0 & \xi \\ \xi^{-1} & 0 \end{pmatrix} \mid \xi^q=1\right\} & c=\frac{p}{q}\in \Q.
    \end{cases}
$$
Now let us compute $H=\sdgal(\K/\K^\sigma(x))$. We first compute $\K^\sigma$. It is clear that $\K^\sigma\subseteq K=C(t,\sqrt{t^2-1})$. On the other hand, $(\calX_{11}\calX_{22})^2-2t\calX_{11}\calX_{22}+1=0$ and $\calX_{11}\calX_{22}\in \R^\sigma$. As $[C(t,\calX_{11}\calX_{22}):C(t)]=2$, one has that $\R^\sigma=\K^\sigma=K=C(t)(\calX_{11}\calX_{22})$. Next, we compute $\sdgal(\K/K(m))$. Note that for any $g=(g_{ij})\in \sdgal(\K/C(m,t))$, $g\in \sdgal(\K/K(m))$ if and only if $g(\calX_{11}\calX_{22})=\calX_{11}\calX_{22}$. An easy calculation yields that if $g_{12}g_{21}=1$ then $\rho_g(\calX_{11}\calX_{22})=\calX_{12}\calX_{21}$. Since $(\calX_{11}\calX_{22})^2\neq 1$, using the relations (\ref{eqn:relations}), $\calX_{11}\calX_{22}\neq \calX_{12}\calX_{21}$. Hence one has that $g\in \sdgal(\K/K(x))$ if and only if $g_{11}g_{22}=1$ and $g_{12}g_{21}=0$. Hence
$$
    H=\{(g_{ij})\in G \mid g_{11}g_{22}=1\}=\left\{\begin{pmatrix} g_{11} & 0 \\ 0 & g_{22} \end{pmatrix} \mid g_{11}g_{22}=1\right\}.
$$
It is easy to see that $G=HH'$.
\end{exam}

In the remainder of this subsection, we shall prove that $H=\stab(\frakm)$ for any maximal $\sigma$-ideal $\frakm$ of $\calS_{\sigma,c_1}$ with suitable $c_1$. 
Since $\K^\sigma$ is a $\delta$-Picard-Vessiot field for some $\delta$-linear system over $k_0$, there is $\eta\in \GL_m(\K^\sigma)$ for some $m$ such that $\K^\sigma=k_0(\eta)$ and moreover $k_0[\eta,1/\det(\eta)]$ is a $\delta$-Picard-Vessiot ring over $k_0$ for the corresponding $\delta$-linear system. In particular, $k_0[\eta,1/\det(\eta)]$ is $\delta$-simple. 
\begin{notation}
\label{not:assumption2}
We assume that
\begin{itemize}
\item $\tilde{\calD}=\tilde{\calD}[\eta,1/\det(\eta)]$;
\item $\tilde{D}=\tilde{\calD}[x][\{\frac{1}{\sigma^i(\frakh)} \mid \forall i\in \Z\}]$;
\item $\tilde{\R}=\tilde{D}[\calX,1/\det(\calX)]$.
\end{itemize}
\end{notation}
We see that $\tilde{\calD}$ is finitely generated over $C$, because $\calD$ is. Using an argument similar to the proof of Lemma~\ref{lm:sigmadeltasimple}, one sees that $\tilde{\calD}$ is $\delta$-simple and $\tilde{D}$ is $\sigma\delta$-simple. Let $\tilde{c}_1: \tilde{\calD}\rightarrow C$ be a $C$-homomorphism that uniquely lifts to a $C[x]$-homomorphism from $\tilde{D}$ to $C(x)$. As before, we consider $\tilde{T}=C(x)\otimes_{\tilde{D}} \tilde{\R}$. Then if $\tilde{T}$ is not the zero ring then it is a $\sigma$-ring. Moreover, by Proposition~\ref{prop:extensiontorsors} with $D=\tilde{D}, T=\tilde{T}$ and $G=H=\Hom_C((\breve{R}\otimes_{\K^\sigma(x)}\breve{R})^{\sigma\delta}, C)$, one has the following $\tilde{T}$-isomorphism:
\begin{align}
\label{eqn:torsors}
    \tilde{T}\otimes_{C(x)} \tilde{T} &\longrightarrow \tilde{T}\otimes_C C[H]\\
    a\otimes b &\longmapsto (a\otimes 1)b(\tilde{\calX} \otimes \calZ) \notag
\end{align}
where $\tilde{\calX}=1\otimes_{\tilde{D}} \calX$.
\begin{lem}
\label{lm:sigmasimple}
Let $H$ be as in Proposition~\ref{prop:generators}. 
There is a Zariski dense subset $U_1$ of $\Hom_C(\tilde{\calD},C)$ such that for any $c_1\in U_1$ if $C(x)\otimes_{\tilde{D}} \tilde{\R}$ is not the zero ring then it is $\sigma$-simple and $H$ is the $\sigma$-Galois group of $C(x)\otimes_{\tilde{D}} \tilde{\R}$ over $C(x)$.
\end{lem}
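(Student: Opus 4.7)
My plan for Lemma~\ref{lm:sigmasimple} splits into two parts: first, assuming $\sigma$-simplicity of $\tilde{T}=C(x)\otimes_{\tilde{D}}\tilde{\R}$, deducing that its $\sigma$-Galois group over $C(x)$ equals $H$; second, producing the Zariski dense subset $U_1\subset \Hom_C(\tilde{\calD},C)$ on which $\sigma$-simplicity does hold.

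For the first part, if $\tilde{T}$ is $\sigma$-simple and nonzero, then by Proposition 6.14 of \cite{hardouin2008differential} (applied with $\Sigma=\{\sigma\}$, $\Delta=\Pi=\emptyset$) combined with $C(x)^\sigma=C$ (which uses that $C$ is algebraically closed and $\sigma(x)=x+1$), we get $\tilde{T}^\sigma=C$. The torsor isomorphism (\ref{eqn:torsors}) then identifies the $\sigma$-constants of $\tilde{T}\otimes_{C(x)}\tilde{T}$ with $C[H]$, after which Proposition~\ref{prop:galoisgroups} yields $\sgal(\tilde{T}/C(x))=\Hom_C(C[H],C)=H$.

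For the second part, I would extract $U_1$ from $\Spec(\tilde{\calD})$ using the fact that the ``generic fiber'' is already under control. Concretely, $\K^\sigma=\mathrm{Frac}(\tilde{\calD})$ is the field of fractions of $\tilde{\calD}$ and one checks that $\K^\sigma(x)\otimes_{\tilde{D}}\tilde{\R}=\breve{R}$, which is $\sigma$-simple by Proposition~\ref{prop:strongsigmapv}. Now write $\tilde{\R}=\tilde{D}[X,1/\det(X)]/\frakq$ and suppose for contradiction that on a Zariski dense set of $c_1$ the ring $\tilde{T}$ admits a maximal $\sigma$-ideal $\frakm\neq(0)$. By Proposition~\ref{prop:constantideal}(2), each such $\frakm$ gives a nonzero ideal $\Phi_{\frakm,\frakm}\subset C[H]$ cutting out $\stab(\frakm)\subsetneq H$. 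The idea is that this proper subgroup condition, transported back through the torsor isomorphism (\ref{eqn:torsors}) and the presentation $\tilde{\R}=\tilde{D}[X,1/\det(X)]/\frakq$, corresponds to a proper $\sigma$-subvariety inside the $H$-torsor $\Spec(\tilde{\R})\to\Spec(\tilde{D})$, which at the generic point over $\Spec(\tilde{\calD})$ reduces to a proper $\sigma$-subvariety of $\Spec(\breve{R})$ over $\K^\sigma(x)$, contradicting the $\sigma$-simplicity of $\breve{R}$. Spelled out, this gives a nonzero element $e\in\tilde{\calD}$ whose nonvanishing at $c_1$ forces $\Phi_{\frakm,\frakm}=0$, and we take $U_1=\{c_1\mid c_1(e)\neq 0\}$.

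The main obstacle is exhibiting such an $e$ explicitly (or equivalently, making the semi-continuity argument precise). The cleanest route I see is to mimic the proof of Lemma~\ref{lm:zerorings}: apply Corollary 3 in \S3.1, Chapter V of \cite{bourbakicommutativealgebra} to lift any $c_1$ in a suitable open subset of $\Spec(\tilde{\calD})$ to a $\sigma\delta$-homomorphism extending to $\tilde{\R}/\frakm$, then use that any such lift would give a $\sigma\delta$-homomorphism from $\breve{R}$ into $\bar{C(x)}$ respecting the torsor structure, forcing $\frakm$ to pull back to a $\sigma$-ideal of $\breve{R}$. Since $\breve{R}$ is $\sigma$-simple, the pulled back ideal is either zero or trivial, and controlling which case occurs via the parameter $c_1$ produces the required $e\in \tilde{\calD}$. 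Once the $\sigma$-simplicity is established, the first part closes the proof.
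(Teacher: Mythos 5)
Your first half (simplicity $\Rightarrow$ the group is $H$, via the torsor isomorphism (\ref{eqn:torsors}) and Proposition~\ref{prop:galoisgroups}) is fine, but the second half, which carries all the weight, has a genuine gap. You propose to produce $U_1$ in the form $\{c_1 \mid c_1(e)\neq 0\}$ for a single nonzero $e\in\tilde{\calD}$, i.e.\ a nonempty Zariski \emph{open} set, by a Bourbaki-style generic-lifting/semicontinuity argument. No such $e$ can exist in general: the locus of $c_1$ where $C(x)\otimes_{\tilde D}\tilde{\R}$ fails to be $\sigma$-simple is typically Zariski \emph{dense}, not contained in a proper closed subset. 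This is already visible in the running Tchebychev example: after adjoining the $\sigma$-constants, the specialized ring is $\sigma$-simple exactly when $c+\sqrt{c^2-1}$ is not a root of unity (Example~\ref{exam:sigmagroups}, Case 2 produces the nonzero maximal $\sigma$-ideal $(\calX_{11}^q-1)$), and the root-of-unity locus is an infinite, hence Zariski dense, subset of the line. So the statement can only deliver a Zariski dense $U_1$, not an open one, and any argument that outputs an open set is proving something false. Relatedly, your semicontinuity heuristic fails for the same reason: a nontrivial $\sigma$-ideal in a special fiber does not spread out to a nontrivial $\sigma$-ideal near the generic point, which is precisely why difference Galois groups can drop on a dense set of parameters.

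The ingredient you are missing is the specialization theorem for difference Galois groups, Theorem~1.2 of \cite{feng2017difference} (the difference analogue of the Hrushovski/Feng--Wibmer results). The paper's proof runs in the opposite direction from yours: it first uses Lemma~\ref{lm:connections} and base change to $\overline{\K^\sigma}(x)$ to see that the generic $\sigma$-Galois group of $\sigma(Y)=AY$ is $H$, whose defining equations $S$ have coefficients in $C$ so that $S^{c_1}=S$; Theorem~1.2 of \cite{feng2017difference} then supplies a Zariski dense $U_1$ on which $H$ is the $\sigma$-Galois group of the specialized equation $\sigma(Y)=A^{c_1}Y$ over $C(x)$. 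Only then is simplicity deduced: for any maximal $\sigma$-ideal $\tilde{\frakm}$, $\stab(\tilde{\frakm})$ is also a Galois group of that equation, hence conjugate to $H$; since $\stab(\tilde{\frakm})\subseteq H$, the Noetherian property forces $\stab(\tilde{\frakm})=H$, and the torsor isomorphism (\ref{eqn:torsors}) then forces $\tilde{\frakm}=(0)$. Without citing (or reproving) that specialization theorem, the lemma does not follow.
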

\begin{proof}
Due to Proposition 2.4 of \cite{chatzidakis2007definitions}, $\bar{T}=\overline{\K^\sigma}(x)\otimes_{\K^\sigma(x)} \breve{R}$ is a $\sigma$-Picard-Vessiot ring over $\overline{\K^\sigma}(x)$ for $\sigma(Y)=AY$. By Lemma~\ref{lm:connections}, $H(\K^\sigma)$ is the $\sigma$-Galois group of $\sigma(Y)=AY$ over $\K^\sigma(x)$. By Corollary 2.5 of \cite{chatzidakis2007definitions} or using an argument similar to the proof of Proposition~\ref{prop:strongsigmapv}, one sees that $H(\overline{\K^\sigma})$ is the $\sigma$-Galois group of $\sigma(Y)=AY$ over $\overline{\K^\sigma}(x)$. Embedding $H$ into $\GL_n(C)$, we consider $H$ as an algebraic subgroup of $\GL_n(C)$ for the monent. Let $S\subset C[Z,1/\det(Z)]$ be a finite set that generates the vanishing ideal of $H$. Then $S$ also generates the vanishing ideal of $H(\overline{\K^\sigma})$. Due to Theorem 1.2 of \cite{feng2017difference} with $\mathbb{X}=\Hom_C(\tilde{\calD},C)$, there is a Zariski dense subset $U_1$ of $\Hom_C(\tilde{\calD},C)$ such that for any $c_1\in U_1$, the variety in $\GL_n(C)$ defined by $S^{c_1}$ is the $\sigma$-Galois group of $\sigma(Y)=A^{c_1}Y$ over $C(x)$. Since $S=S^{c_1}$, $H$ is the $\sigma$-Galois group of $\sigma(Y)=A^{c_1}Y$ over $C(x)$ for any $c_1\in U_1$.

Suppose $c_1\in U_1$ and $\tilde{T}=C(x)\otimes_{\tilde{D}} \tilde{\R}$ is not the zero ring. We shall show that $\tilde{T}$ is $\sigma$-simple and then it is a $\sigma$-Picard-Vessiot ring over $C(x)$ for $\sigma(Y)=A^{c_1}Y$. Let $\tilde{\frakm}$ be a maximal $\sigma$-ideal of $\tilde{T}$. Due to 
Theorem~\ref{thm:sigmasubgroups} with $\frakm=\tilde{\frakm}$ and $G=H$, $\stab(\tilde{\frakm})=\{h\in H \mid \rho_h(\tilde{\frakm})\subset \tilde{\frakm}\}$ is the $\sigma$-Galois group of $\tilde{T}/\tilde{\frakm}$ over $C(x)$. In other words, $\stab(\tilde{\frakm})$ is also the $\sigma$-Galois group of $\sigma(Y)=A^{c_1}Y$ over $C(x)$. Hence there is a $g\in \Hom_C(C[\GL_n], C)$ such that $\stab(\tilde{\frakm})=gHg^{-1}$. Since $\stab(\tilde{\frakm})\subset H$, one has that $gHg^{-1}\subset H$ and thus $H\supset gHg^{-1}\supset g^2 H g^{-2}\supset \dots$. The Noetherian property of $H$ then implies that $H=gHg^{-1}=\stab(\tilde{\frakm})$. This could happen only if $\tilde{\frakm}=(0)$ by (\ref{eqn:torsors}). So $\tilde{T}$ is $\sigma$-simple.
\end{proof}

\begin{prop}
\label{prop:differencespecialization}
Let $H$ be as in Proposition~\ref{prop:generators}. Then there is a Zariski dense subset $U_1$ of $\Hom_C(\calD,C)$ such that $H=\stab(\frakm)$, where $\frakm$ is any maximal $\sigma$-ideal of $C(x)\otimes_D \R$.
\end{prop}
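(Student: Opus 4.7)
The plan is to descend the Zariski-dense set $\tilde U_1\subset\Hom_C(\tilde{\calD},C)$ produced by Lemma~\ref{lm:sigmasimple} along the restriction map to obtain $U_1$, compare the maximal $\sigma$-ideals of $C(x)\otimes_D\R$ with those of $C(x)\otimes_{\tilde D}\tilde\R$ through a natural $\sigma$-surjection, and then invoke normality of $H$ in $G$ to handle all maximal $\sigma$-ideals simultaneously.

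First I would observe that $\tilde{\calD}=\calD[\eta,1/\det(\eta)]$ is a finitely generated $\calD$-algebra and a domain (being a subring of the field $\K^\sigma$), so $\calD\hookrightarrow\tilde{\calD}$ is an injection of finitely generated $C$-domains. Consequently the induced morphism of affine $C$-varieties $\mathrm{Spec}(\tilde{\calD})\to\mathrm{Spec}(\calD)$ is dominant, and dominant morphisms send Zariski-dense subsets to Zariski-dense subsets (the preimage of the closure of the image of a dense subset is closed and contains the source, so equals it). Lemma~\ref{lm:zerorings} furnishes a nonzero $\tilde a\in \tilde D$ such that $C(x)\otimes_{\tilde D}\tilde\R\neq 0$ whenever $\tilde c_1(\tilde a)\neq 0$; the locus in $\Hom_C(\tilde{\calD},C)$ cut out by this condition is Zariski open and nonempty, hence dense, so its intersection with $\tilde U_1$ remains Zariski dense. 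Define $U_1$ to be the restriction image of this intersection in $\Hom_C(\calD,C)$. Then $U_1$ is Zariski dense, and every $c_1\in U_1$ lifts to some $\tilde c_1\in\tilde U_1$ with $C(x)\otimes_{\tilde D}\tilde\R$ nonzero.

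Fix $c_1\in U_1$ together with such a lift $\tilde c_1$. Since $\R\subset\tilde\R$ and $\tilde c_1|_D=c_1$, one obtains a natural $C(x)$-$\sigma$-surjection
\[
\pi:\ C(x)\otimes_D\R\ \twoheadrightarrow\ C(x)\otimes_{\tilde D}\tilde\R,\qquad c\otimes r\mapsto c\otimes r,
\]
which carries the fundamental matrix $1\otimes\calX$ on the source to $1\otimes\calX$ on the target. By Lemma~\ref{lm:sigmasimple} the target is $\sigma$-simple and its $\sigma$-Galois group over $C(x)$, identified inside $\GL_n(C)$ via $1\otimes\calX$, equals $H$. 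Hence $\frakm_0:=\ker\pi$ is a maximal $\sigma$-ideal of $C(x)\otimes_D\R$, and Theorem~\ref{thm:sigmasubgroups} applied to $\frakm_0$ identifies $\stab(\frakm_0)$ as the $\sigma$-Galois group of the quotient $(C(x)\otimes_D\R)/\frakm_0\cong C(x)\otimes_{\tilde D}\tilde\R$ with respect to the same fundamental matrix. Therefore $\stab(\frakm_0)=H$ as subgroups of $\GL_n(C)$.

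Finally, for an arbitrary maximal $\sigma$-ideal $\frakm$ of $C(x)\otimes_D\R$, Corollary~\ref{cor:maximaltomaximal} supplies a $g\in G$ such that $\stab(\frakm)$ and $\stab(\frakm_0)$ are conjugate by $g$, so $\stab(\frakm)=g^{-1}Hg$. The argument then closes using that $H=\sdgal(\K/\K^\sigma(x))$ is normal in $G=\sdgal(\K/k_0(x))$, observed in the discussion preceding Proposition~\ref{prop:generators} (since $\K^\sigma(x)$ is stable under every element of $G$); normality gives $g^{-1}Hg=H$, hence $\stab(\frakm)=H$. The step I expect to be subtlest is verifying that the identifications of $\stab(\frakm_0)$ and of $H$ as subgroups of $\GL_n(C)$ (not just as abstractly isomorphic groups) actually coincide; this reduces to the observation that both are built from the same fundamental matrix $1\otimes\calX$ modulo $\frakm_0$ via the isomorphism induced by $\pi$, so the subsets really do match on the nose.
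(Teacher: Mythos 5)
Your proof is correct and follows essentially the same route as the paper: descend the Zariski-dense set from Lemma~\ref{lm:sigmasimple} along the restriction $\Hom_C(\tilde{\calD},C)\to\Hom_C(\calD,C)$ (intersected with the nonvanishing locus from Lemma~\ref{lm:zerorings}), take $\frakm_0$ to be the kernel of the natural surjection $C(x)\otimes_D\R\twoheadrightarrow C(x)\otimes_{\tilde{D}}\tilde{\R}$, and finish with Corollary~\ref{cor:maximaltomaximal} together with the normality of $H$ in $G$. The only (harmless) difference is at the step $\stab(\frakm_0)=H$: you identify the two subgroups of $\GL_n(C)$ on the nose via the fundamental-matrix-preserving isomorphism $(C(x)\otimes_D\R)/\frakm_0\cong C(x)\otimes_{\tilde{D}}\tilde{\R}$ (which is legitimate, since Lemma~\ref{lm:sigmasimple} pins down $H$ as the Galois group of the target with respect to that same fundamental matrix), whereas the paper first verifies the containment $H\subseteq\stab(\frakm_0)$ explicitly and then upgrades the conjugacy $\stab(\frakm_0)=gHg^{-1}$ to equality by the Noetherian chain argument.
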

\begin{proof}
Note that $\calD\subset \tilde{\calD}, D\subset \tilde{D}$ and $\R\subset \tilde{\R}$. By Lemma~\ref{lm:sigmasimple}, there is a Zariski dense subset $\tilde{U}_1$ of $\Hom_C(\tilde{\calD},C)$ such that for any $\tilde{c}_1\in \tilde{U}_1$ if $\tilde{T}=C(x)\otimes_{\tilde{D}} \tilde{\R}$ is not the zero ring then $\tilde{T}$ is a $\sigma$-Picard-Vessiot ring over $C(x)$ for $\sigma(Y)=A^{\tilde{c}_1}Y$ and $H$ is the $\sigma$-Galois group of $\tilde{T}$ over $C(x)$. By Lemma~\ref{lm:zerorings}, there is a nonzero element in $\tilde{D}$, say $a$, such that for any $\tilde{c}\in \Hom_{C[x]}(\tilde{D},C(x))$ if $\tilde{c}(a)\neq 0$ then $C(x)\otimes_{\tilde{D}} \tilde{\R}$ is not the zero ring. Write $a=a_1/a_2$ with $a_i\in \tilde{\calD}[x]$.
Let 
$$
U_1=\{\tilde{c}_1|_{\calD} \mid \tilde{c}_1\in \tilde{U}_1 \,\,\mbox{and}\,\,\tilde{c}_1(\lc(a_1a_2\frakh))\neq 0\}
$$ 
where $\lc(\cdot)$ denotes the leading coefficient of a polynomial in $x$.
We shall prove that $U_1$ has the desired property. It is clear that $U_1$ is a Zariski dense subset of $\Hom_C(\calD,C)$. Suppose $c_1\in U_1$ and $\tilde{c}_1\in \tilde{U}_1$ such that $c_1=\tilde{c}_1|_D$. Then $\tilde{T}=C(x)\otimes_{\tilde{D}} \tilde{\R}$ is not the zero ring. We have the natural homomorphisms $\psi: T\rightarrow \tilde{T}, 1\otimes_D \calX \mapsto 1\otimes_{\tilde{D}} \calX$. Thus $T$ is not the zero ring too. Set $\frakm=\ker(\psi)$. Then $\frakm$ is a maximal $\sigma$-ideal. Suppose $h\in H\subset \Hom_C(C[G],C)$. Then $h$ induces a $\sigma$-$C(x)$-automorphism of $T$ defined as $a\otimes_D b(\calX)\mapsto a\otimes_D b(\calX h(\calZ))$ and also induces a $\sigma$-$C(x)$-automorphism of $\tilde{T}$ defined as $a\otimes_{\tilde{D}} b(\calX)\mapsto a\otimes_{\tilde{D}} b(\calX h(\calZ))$. Suppose $\sum a_i\otimes_D b_i\in \frakm$, \ie $\sum a_i\otimes_{\tilde{D}} b_i=0$. Then $\sum a_i \otimes_{\tilde{D}} b_i(\calX h(\calZ))=0$ by Lemma~\ref{lm:sigmasimple}. This implies that $\sum a_i\otimes_D b_i(\calX h(\calZ))\in \frakm$. Hence $h\in \stab(\frakm)$ and thus $H\subset \stab(\frakm)$. As both $H$ and $\stab(\frakm)$ are the $\sigma$-Galois group of $\sigma(Y)=A^{c_1}Y$ over $C(x)$, $H=\stab(\frakm)$ by an argument similar to that in Lemma~\ref{lm:sigmasimple}. Finally, suppose that $\frakm'$ is another maximal $\sigma$-ideal of $T$. Then there is a $g\in G$ such that $\stab(\frakm')=g\stab(\frakm)g^{-1}$ by Corollary~\ref{cor:maximaltomaximal}. Since $H$ is a normal subgroup of $G$, $\stab(\frakm')=gHg^{-1}=H$.
\end{proof}

\begin{thm}
\label{thm:mainresult}
There is a Zariski dense subset $U_1$ of  $\Hom_C(\calD,C)$ and a Zariski dense subset $U_2$ of $\Hom_C(C[x],C)$ such that for any $c_1\in U_1$ and any $c_2\in U_2$, $G=\stab(\frakm)\stab(\frakn)$, where $\frakm, \frakn$ are given as in Theorem~\ref{thm:sigmasubgroups} and Theorem~\ref{thm:deltasubgroups} respectively.
\end{thm}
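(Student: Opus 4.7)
The plan is to splice together Proposition~\ref{prop:generators}, which already delivers $G = H\cdot H'$ with $H = \sdgal(\K/\K^\sigma(x))$ identified in $G$ and $H' = \stab(\frakn)$, with Proposition~\ref{prop:differencespecialization}, which identifies $H$ with $\stab(\frakm)$ for generic $c_1$. The content of the theorem is thus essentially the simultaneous packaging of two dense-open conditions.

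First I would take $U_1$ to be the Zariski dense subset of $\Hom_C(\calD,C)$ furnished by Proposition~\ref{prop:differencespecialization}, so that for every $c_1 \in U_1$ and every maximal $\sigma$-ideal $\frakm$ of $C(x)\otimes_D \R$ one has $\stab(\frakm) = H$. To construct $U_2$, I would use Lemma~\ref{lm:zerorings} (with $F = k_0$) to fix a nonzero $a \in D$ ensuring that $k_0 \otimes_D \R$ is not the zero ring whenever $\tilde c_2(a) \neq 0$, and then define $U_2 \subset \Hom_C(C[x], C)$, identified with $C$ via $c_2 \mapsto c_2(x)$, as the set of those $c_2$ such that $\frakh(c_2(x)+i) \neq 0$ for every $i \in \Z$ (so that $c_2$ extends uniquely to a $C$-$\delta$-homomorphism $\tilde c_2 : D \to k_0$) and $\tilde c_2(a) \neq 0$. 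By construction these are precisely the hypotheses needed to invoke Theorem~\ref{thm:deltasubgroups}.

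The step requiring a little care, and which I expect to be the main obstacle, is the Zariski density of $U_2$. Since proper Zariski closed subsets of the affine line over $C$ are finite, it suffices to show $U_2$ is infinite. The condition $\frakh(c_2(x)+i) \neq 0$ fails only on a finite union of $\Z$-cosets in $C$ (one coset $r - \Z$ per root $r \in C$ of $\frakh \in k_0[x]$); writing $a = p/\prod\sigma^{i_j}(\frakh)^{n_j}$ with $p \in \calD[x]$ nonzero, the remaining condition $\tilde c_2(a) \neq 0$ fails only on the finite zero set of $p$ in $C$. Since $C$ has characteristic zero and hence contains infinitely many pairwise incongruent $\Z$-cosets (for instance, the classes of $1/p$ for distinct primes $p$ are pairwise incongruent modulo $\Z$), removing finitely many cosets together with finitely many points still leaves infinitely many elements, so $U_2$ is infinite and therefore Zariski dense. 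This is the only point where $C$ being possibly countable (e.g.\ $C = \overline{\Q}$) forces one to argue by coset count rather than by a crude ``cofinite'' bound.

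Finally, for $c_1 \in U_1$ and $c_2 \in U_2$ the conclusion is mechanical: Proposition~\ref{prop:differencespecialization} yields $\stab(\frakm) = H$, Theorem~\ref{thm:deltasubgroups} makes $\stab(\frakn)$ well-defined as the $\delta$-Galois group of $\calS_{\delta,c_2}$, and Proposition~\ref{prop:generators} gives $G = H\cdot\stab(\frakn) = \stab(\frakm)\cdot\stab(\frakn)$. Apart from the density check above, the argument is pure bookkeeping on top of Propositions~\ref{prop:generators} and~\ref{prop:differencespecialization}.
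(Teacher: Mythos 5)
Your proof is correct and follows essentially the same route as the paper: $U_1$ is taken from Proposition~\ref{prop:differencespecialization}, $U_2$ is cut out by the nonvanishing conditions coming from Lemma~\ref{lm:zerorings} and from $\frakh(c_2(x)+i)\neq 0$, and the conclusion is assembled from Theorem~\ref{thm:deltasubgroups} and Proposition~\ref{prop:generators}. The only (immaterial) difference is the density check for $U_2$: you exhibit infinitely many pairwise incongruent $\Z$-cosets of $\Q$ inside $C$ to see that removing finitely many cosets and finitely many points leaves an infinite set, whereas the paper argues that otherwise $C$ would be a finitely generated $\Z$-module; both arguments rest on the same fact that $\Q\subseteq C$.
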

\begin{proof}
Let $b$ be a nonzero element in $D$ such that for any $c\in \Hom_C(D,k_0)$ with $c(b)\neq 0$, $k_0\otimes_D \R$ is not the zero ring. Write $b=b_1/b_2$ where $b_1,b_2\in \calD[x]$ and $b_2\neq 0$ and set
$$
   U_2=\{ c_2\in \Hom_C(C[x],C) \mid b_1(c_2(x))b_2(c_2(x))\frakh(c_2(x)+i)\neq 0\,\,\forall i\in \Z\}.
$$
We claim that $U_2$ is Zariski dense. Otherwise assume that $U_2=\{u_1,\dots,u_\ell\}$. Let $u_{\ell+1},\dots,u_\nu$ be all zeroes of $b_1b_2$ in $C$  and $v_1,\dots,v_s$ are all zeroes of $\frakh$ in $C$. Then
$$
C=\{u_i \mid 1\leq i \leq \nu\}\cup \cup_{i=1}^s \{v_i+\Z\}\subseteq \Z+\sum_{i=1}^\nu \Z u_i +\sum_{i=1}^s \Z v_i\subseteq C.
$$
This implies that $C$ is a finitely generated $\Z$-module. However this is impossible, since $\Q\subset C$ is not finitely generated as a $\Z$-module. Due to Proposition~\ref{prop:differencespecialization}, let $U_1$ be a Zariski dense subset of $\Hom_C(\calD,C)$ such that for any $c_1\in U_1$, $H=\stab(\frakm)$. The theorem then follows from Proposition~\ref{prop:generators}.
\end{proof}
\begin{exam}
 Let $U_1=\{c\in C \mid \mbox{$c+\sqrt{c^2-1}$ is not a root of unit}\}$ and $U_2=C$. From Examples~\ref{exam:sigmagroups}, \ref{exam:deltagroups} and~\ref{exam:normalsubgroups}, one has that $G=\stab(\frakm)\stab(\frakn)$ for any $c_1\in U_1$ and any $c_2\in U_2$.
\end{exam}



\bibliographystyle{IEEEtran}
\bibliography{bibfile}

\end{document}